\documentclass[a4paper, 11pt]{amsart}

\usepackage{a4wide}
\usepackage[english]{babel} 
\usepackage{amsmath, amssymb} 
\usepackage{tikz}
%\usepackage{xy}
%\input xy
%\xyoption{all}
\usepackage{tikz-cd}
	\tikzcdset{row sep/normal=1.3em}
	\tikzcdset{column sep/normal=1.3em}
%\usetikzlibrary{decorations}
%\usetikzlibrary{decorations.pathmorphing}
%\usetikzlibrary{shapes,snakes}
\usepackage[normalem]{ulem}
\usepackage[mathscr]{euscript}

\usepackage{enumerate}
\usepackage{hyperref}
\usepackage[capitalize,noabbrev]{cleveref}

%%%%%%%%%%%%%%%%%%%%%%%%%%%%%%%%

\newcommand{\fun}{\mathrm{Fun}}
\renewcommand{\hom}{\mathrm{hom}}

\newcommand{\Q}{\mathbb{Q}}

\newcommand{\Z}{\mathbb{Z}}
\newcommand{\F}{\mathbb{F}}

\newcommand{\bbS}{\mathbb{S}}

\newcommand{\M}{\mathrm{M}}

\newcommand{\K}{K_{\mathrm{nc}}}

\newcommand{\cO}{\mathcal{O}}
\newcommand{\cA}{\mathcal{A}}

\newcommand{\cC}{\mathcal{C}}
\newcommand{\cM}{\mathcal{M}}

\newcommand{\cD}{\mathcal{D}}

\newcommand{\cP}{\mathcal{P}}

\newcommand{\E}{\mathbb{E}}

\newcommand{\swedge}{{\scriptscriptstyle\wedge}}
\newcommand{\tensor}{\otimes}

\DeclareMathOperator{\hgt}{ht}

\newcommand{\op}{\mathrm{op}}
\newcommand{\Cat}{\mathrm{Cat}}

\newcommand{\exact}{\mathrm{ex}}
\newcommand{\perf}{\mathrm{perf}}

\renewcommand{\inf}{\mathrm{inf}}
\newcommand{\GL}{\mathrm{GL}}
\newcommand{\BGL}{\mathrm{BGL}}

\newcommand{\ku}{ku}
\newcommand{\ko}{ko}

\newcommand{\tmf}{tm\mkern-2mu f}
\newcommand{\Tmf}{T\mkern-2mu m\mkern-2mu f}

\newcommand{\adj}{[\tfrac{1}{p}]}

\newcommand{\B}{\mathrm{B}}
\newcommand{\add}{\mathrm{add}}
\newcommand{\Stab}{\mathrm{Stab}}
\newcommand{\Ab}{\mathrm{Ab}}

\newcommand{\lto}{\longrightarrow}

\newcommand{\id}{\mathrm{id}}

\DeclareMathOperator{\Spec}{Spec}
\DeclareMathOperator{\Spc}{Spc}
\DeclareMathOperator{\Sp}{Sp}

\DeclareMathOperator*{\colim}{colim}
\newcommand{\laxtimes}[1]{\mathop{\times\mkern-13mu\raise1.3ex\hbox{$\scriptscriptstyle\to$}_{#1}}}
\newcommand{\slax}{ \times\mkern-14mu\raise1ex\hbox{$\scriptscriptstyle\to$} }

\DeclareMathOperator{\RMod}{RMod}
\DeclareMathOperator{\Mod}{Mod}

\DeclareMathOperator{\Perf}{Perf}

\DeclareMathOperator{\Map}{Map}
\DeclareMathOperator{\map}{map}
\DeclareMathOperator{\End}{End}
\DeclareMathOperator{\Fun}{Fun}

\DeclareMathOperator{\Ind}{Ind}
\DeclareMathOperator{\fib}{fib}

\DeclareMathOperator{\TC}{TC}

\DeclareMathOperator{\THH}{THH}

\newcommand{\wtimes}[2]{\odot_{#1}^{#2}}

\newtheorem{thm}{Theorem}
\newtheorem*{thm*}{Theorem}
\newtheorem*{puritythm}{Purity Theorem}
\newtheorem*{redshift}{Redshift Theorem}
\newtheorem{cor}[thm]{Corollary}
\newtheorem*{cor*}{Corollary}
\newtheorem{lemma}[thm]{Lemma}
\newtheorem{prop}[thm]{Proposition}

\newtheorem{quest}[thm]{Question}

\theoremstyle{definition}
\newtheorem{dfn}[thm]{Definition}
\newtheorem*{dfn*}{Definition}

\newtheorem*{ex*}{Example}
\newtheorem{ex}[thm]{Example}

\theoremstyle{remark}

\newtheorem*{claim*}{Claim}

\numberwithin{thm}{section}

\newtheorem{rem}[thm]{Remark}
\newtheorem*{rem*}{Remark}

\theoremstyle{plain}
\newcounter{zaehler}

\newtheorem{introthm}[zaehler]{Theorem}
\newtheorem{introcor}[zaehler]{Corollary}
\newtheorem*{introcor*}{Corollary}

%%%

%%%

\title{Purity in chromatically localized algebraic \textit{K}-theory}

\author{Markus Land}
\address{Mathematisches Institut, Ludwig-Maximilians-Universit\"at M\"unchen, Theresienstr. 39, 80333 M\"unchen, Germany}
\email{markus.land@math.lmu.de}

\author{Akhil Mathew}
\address{Department of Mathematics, University of Chicago, 5734 S University
Ave, Chicago, IL 60637 USA}
\email{amathew@math.uchicago.edu}

\author{Lennart Meier}
\address{Mathematical Institute, Utrecht University, Budapestlaan 6, 3584 CD Utrecht, The Netherlands}
\email{f.l.m.meier@uu.nl}

\author{Georg Tamme}
\address{Institut f\"ur Mathematik, Fachbereich 08, Johannes Gutenberg-Universit\"at Mainz, 55099 Mainz, Germany}
\email{georg.tamme@uni-mainz.de}

\thanks{The first and fourth authors were partially supported by the CRC/SFB 1085
\emph{Higher Invariants} (Universit\"at Regensburg) funded by the DFG. The first
author was further supported by the DFG through a research fellowship and by the
Danish National Research Foundation through the Centre for Symmetry and
Deformation (DNRF92) and the Centre for Geometry and Topology (DNRF151). Results
incorporated in this paper have received funding from the European Union's
Horizon 2020 research and innovation programme under the Marie Sklodowska-Curie
grant agreement No 888676. This work was done while the second author was
supported by a  Clay
Research Fellowship and by the National Science Foundation under Grant No.~2152311. The third author was supported by the NWO through VI.Vidi.193.111. The fourth author was further partially supported by the DFG through TRR 326 (Project-ID 444845124).}

%% MSC classes: 

\setcounter{tocdepth}{2}

\date{\today}

\begin{document}

\begin{abstract}
We prove a purity property in telescopically localized algebraic
$K$-theory of ring spectra: For $n\geq 1$, the $T(n)$-localization of $K(R)$ only depends on the $T(0)\oplus \dots \oplus T(n)$-localization of $R$. This complements a classical result of Waldhausen in rational $K$-theory. Combining our result with work of Clausen--Mathew--Naumann--Noel, one finds that $L_{T(n)}K(R)$ in fact only depends on the $T(n-1)\oplus T(n)$-localization of $R$, again for $n \geq 1$. As consequences, we deduce several vanishing results for telescopically localized $K$-theory, as well as an equivalence between $K(R)$ and $\TC(\tau_{\geq 0} R)$ after $T(n)$-localization for $n\geq 2$.
\end{abstract}

\maketitle

\tableofcontents

\section{Introduction}

The subject of this paper is the algebraic $K$-theory of (structured) ring
spectra. The field was initiated by Waldhausen \cite{Waldhausen}, with motivations from 
manifold topology, cf.~\cite{WaldhausenJahrenRognes} and has recently seen spectacular applications to the telescope conjecture in chromatic homotopy theory \cite{BHLS}.

Placing even classical discrete rings in the context of ring spectra (or derived rings) is very fruitful: For instance, derived blowups play an important role in the solution of Weibel's conjecture \cite{KST} and the approach to excision in $K$-theory of \cite{LT, LT2} also illustrates how ring spectra arise naturally even in
questions involving only the $K$-theory of classical rings. In addition, trace methods can be used to approximate $K$-theory 
by topological cyclic homology and have been a major computational technique for the past decades (see e.g.\ \cite{BokstedtHsiangMadsen,HesselholtMadsenFinite, DGM, CMM}). 

For a given ring spectrum $R$, it is often easier to study completions or localizations of the spectrum $K(R)$ than directly the homotopy groups of $K(R)$, i.e.\ the groups $K_i(R)$. For example, it is a theorem of Suslin \cite{Suslin1} that the $p$-adic completion of the algebraic $K$-theory of separably closed fields is fully understood, much unlike its rationalization. Conversely Borel \cite{Borel} computed the rational $K$-theory of number rings long before substantial information (beyond finite generation) about their integral behavior was known. Moreover, Waldhausen \cite{Waldhausen}
proved that a rational equivalence between connective ring spectra which is a
$\pi_0$-isomorphism induces an equivalence in rational algebraic $K$-theory. As the map $\bbS \to \Z$ from the sphere spectrum to the integers is a rational equivalence, this resulted in a computation of the rational $K$-theory of $\bbS$, which was of great use in geometric topology, see e.g.\ \cite{FarrellHsiang}. 

Our article is concerned with generalizations of Waldhausen's result in the context of chromatic homotopy theory, which organizes stable homotopy theory by a notion of height in which classical discrete rings have height $\leq 0$. From this viewpoint, inverting $p$, or equivalently localization at $\bbS[\tfrac{1}{p}] = T(0)$, for which the analog of Waldhausen's result is equally correct, is merely the zeroth in a whole sequence of localizations depending on a fixed prime $p$. The higher height analogs at prime $p$ are given by localization with respect to a height $n$ telescope $T(n)$, see \cref{recollections} for details.
The associated Bousfield localization functor $L_{T(n)}$ isolates phenomena at height $n$, just like $p$-completion isolates phenomena at the prime $p$. We refer to $L_{T(n)}$ as \emph{telescopic localization} (at height~$n$ and implicit prime $p$).

For $n=1$, the telescopic localization agrees with the better known localization at Morava $K$-theory $K(1)$ by \cite{Miller,Mahowald}. Moreover, in seminal work \cite{Thomason}, Thomason related \'etale algebraic $K$-theory to the ${K(1)}$-localization of $K$-theory thereby showing that algebraic $K$-theory interacts in an intriguing way with the chromatic filtration in low heights.  The Lichtenbaum--Quillen conjecture, now a theorem due to work of Voevodsky and Rost \cite{V1,V2}, can therefore be formulated as describing the height one information of the algebraic $K$-theory of rings of height $0$. Moreover, it was shown by Mitchell that the higher telescopic localizations of algebraic $K$-theory of classical rings vanish \cite{Mitchell}. This, together with concrete calculations with Ausoni \cite{AR}, led Rognes to formulate the redshift philosophy and higher chromatic versions of Quillen--Lichtenbaum type conjectures. Roughly speaking, these conjectures say that the algebraic $K$-theory of a ring spectrum of height $n$ should be of height $(n+1)$, in a quantitative manner. We explain more about redshift and how our results contribute to its understanding later in this introduction.

In the context of the interaction of algebraic $K$-theory with different chromatic heights, Bhatt--Clausen--Mathew \cite{BCM} have recently proven the following theorem, based on
arithmetic techniques, in particular the theory of prismatic cohomology and its relationship with topological cyclic homology, \cite{BMS2, Prisms}:
A map of $H\Z$-algebras which is a $T(0)$-equivalence induces an equivalence in $T(1)$-local $K$-theory. Taking all of the above into account, one is naturally led to the following question: 
\begin{quote}
For general height $n$, to what extent does the $T(n)$-local $K$-theory of a ring spectrum $A$  depend only on telescopic localizations of $A$ ?
\end{quote}
Except from the above results of Waldhausen and Bhatt--Clausen--Mathew nothing in this direction was known.
The main goal of this paper is to give a complete  answer to this question at all chromatic heights $n \geq 1$. The strongest form of our result --- the Purity Theorem below --- is proved jointly between this paper and the related work \cite{CMNN2}.
Our contribution here is the following theorem.

\begin{introthm} \label{thm-A} Let $A$ be a ring spectrum. 
	\begin{enumerate}
	\item For $n\geq 1$, the canonical map $A \to L_{T(0) \oplus \dots \oplus T(n)}A$ induces an equivalence on $T(n)$-local $K$-theory.
	\item For $n\geq 2$, the canonical map $A \to L_{T(1)\oplus \dots \oplus T(n)}A$ induces an equivalence on $T(n)$-local $K$-theory.
	\end{enumerate}
\end{introthm}

As a direct consequence one obtains for instance that, for $n\geq 2$, the $T(n)$-local $K$-theory of a ring spectrum depends only on its connective cover, allowing for effective use of trace methods. For instance, Clausen--Mathew--Naumann--Noel made use of it in establishing the following vanishing result, and subsequently Ben-Moshe--Carmeli--Schlank--Yanovski made use of such results in their work on hyperdescent for $K(n)$-local $K$-theory \cite{BMCSY}.

\begin{thm}[{\cite{CMNN2}}]\label{thm-cmnn}
For $n\geq 2$, the $K$-theory of $L_{T(0)\oplus \dots \oplus T(n-2)}\bbS$ vanishes $T(n)$-locally.
\end{thm}

In fact, Clausen--Mathew--Naumann--Noel prove a more general result, see \cite[Theorem~C]{CMNN2}. For the reader's convenience we indicate the proof of the above special case in  Remark~\ref{CMNN-argument}. Combining Theorems~\ref{thm-A} and \ref{thm-cmnn}, a fracture square argument yields the following result.

\begin{puritythm}
Let $A$ be a ring spectrum. For $n\geq 1$, the canonical map $A \to L_{T(n-1)\oplus T(n)}A$ induces an equivalence in $T(n)$-local $K$-theory.
\end{puritythm}

The purity theorem is optimal in the sense that the functor $A \mapsto L_{T(n)}
K(A)$ does not factor through either $A \mapsto L_{T(n-1)}A$ or $A \mapsto L_{T(n)}A$
(see \Cref{purity:optimal} below). 
In the following, we discuss some consequences of our results.

\subsection*{Redshift}

As indicated before, in seminal papers Thomason \cite{Thomason,TT} proved that (under mild finiteness assumptions)
the $T(1)$-local $K$-theory of algebraic spaces in which $p$ is invertible satisfies \'etale hyperdescent and describes their $T(1)$-local $K$-groups in terms of \'etale cohomology groups with coefficients in $p$-adic Tate twists; see \cite[Theorem~3.9]{BCM} for a spectrum level version. As
 a consequence, one can formulate the proven Lichtenbaum--Quillen conjecture as asserting that for schemes over $\Z[\frac{1}{p}]$ the $p$-adic $K$-theory is asymptotically $T(1)$-local, that is, it agrees with its $T(1)$-localization in high enough degrees (\cite[\S 4]{WaldhausenLoc}, see \cite{CM2} for a modern account and precise statements). 
Informally, one can think of $T(1)$-localization as enforcing a form of Bott periodicity so that the $p$-adic $K$-theory is Bott periodic in high degrees.

Ausoni--Rognes conjectured higher chromatic analogs of such statements and provided evidence through 
 precise computations of the mod $(p, v_{1})$-homotopy of the algebraic $K$-theory of  $\ell_{p}= BP\langle1\rangle$, $ku_{p}^{\swedge}$, and $\ell_{p}/p$ \cite{ARconj,AR,Ausoni, AR2}. 
Quite recently, \'etale hyperdescent for telescopically localized $K$-theory at any height has been established in \cite{CMNN,CM2}.
Exciting progress in the direction of the Lichtenbaum--Quillen conjectures at arbitrary height has been made by Hahn--Wilson \cite{HW21} verifying these for the truncated Brown--Peterson spectra $BP\langle n\rangle$ for all $n$.

These higher chromatic Lichtenbaum--Quillen conjectures suggest in particular that algebraic $K$-theory increases chromatic complexity by 1, a property which has been coined \emph{redshift}.     
Despite the computational advances mentioned above, a complete structural understanding of redshift has been missing so far. Now,
for $\E_{\infty}$-rings there is a particularly well behaved notion of chromatic complexity
called the height.
Namely Hahn \cite{Hahn} proved that, for any $n$, a  $T(n)$-acyclic $\E_{\infty}$-ring is also $T(n+1)$-acyclic, and one defines the height  of an $\E_{\infty}$-ring $R$ as 
\[
\hgt(R) = \inf \{  n \geq -1 \,|\, R \otimes T(n+1) = 0 \}.
\]
One precise formulation of redshift is that for $\E_{\infty}$-rings of non-negative height, one has $\hgt( K(R) ) = \hgt(R)+1$.\footnote{Since $\hgt(K(\F_p^{tC_p})) = -1 \neq \hgt(\F_p^{tC_p}) + 1$, one should indeed restrict this formulation of redshift to non-negative height rings (or introduce a finer notion of negative heights, so that $\hgt(\F_p^{tC_p}) = -2$).} Prior to our work, a general form of redshift was wide open. 
As an immediate consequence of the Purity Theorem one obtains the following inequality (see also \cite[Theorem~A]{CMNN2} for this joint result):

\begin{introthm}\label{introthmB}
For an $\E_{\infty}$-ring $R$, we have $\hgt(K(R)) \leq \hgt(R) + 1$.
\end{introthm}
The converse inequality is by now in fact also known, and follows from works of Yuan and Burklund--Schlank--Yuan each of which appeared after the first versions of the present article: Making use of Theorem~\ref{thm-A}, Yuan proved that $\hgt(K(E_n)) \geq \hgt(E_{n})+1$ where $E_n$ is the Lubin--Tate theory associated to a formal group $\mathbb{G}$ of height $n$ over a perfect field $k$ of characteristic $p$, see \cite[Theorem~A]{Yuan}. He uses our result, together with several other ingredients, to reduce the question to showing $\hgt(K((\tau_{\geq 0}E_n)^{tC_p}))\geq n$, for which he can employ an argument using the $\mathbb{T}$-equivariant Dennis trace map. Then, in their work on higher chromatic analogs of Hilbert's Nullstellensatz, Burklund--Schlank--Yuan \cite[Theorem D]{BSY} show that, for $n\geq 0$ and any $\E_{\infty}$-ring $R$ with $L_{T(n)}R\not=0$, there exists an $\E_{\infty}$-map $R \to E_{n}$ for  an appropriate choice of $(\mathbb G, k)$.\footnote{For $n=0$, $E_n$ is given by $k[t^{\pm}]$ with $|t|=2$ and $k$ of characteristic 0.} Combining these results with Theorem~\ref{introthmB}, one thus obtains redshift for $\E_\infty$-rings:
\begin{redshift}[\cite{Yuan, BSY}, Theorem~\ref{introthmB}]\hypertarget{thm:Redshift}
For an $\E_\infty$-ring $R$ with $\hgt(R) \geq 0$, we have  $\hgt(K(R)) = \hgt(R)+1$.
\end{redshift}

One can interpret this result as follows. We recall that associated to any spectrum $X$ is its telescopic filtration $\{ L_n^{p,f}X \}_{n \geq 0}$ which is closely related to the chromatic filtration $\{ L_n^{p}X\}_{n \geq 0}$ alluded to in the very beginning of this introduction, see Section~\ref{recollections} for further background. It follows from Lemma~\ref{telescope conjecture on ring spectra} that for a ring spectrum, the $n$th graded piece in the telescopic filtration vanishes if and only if the $n$th graded piece in the chromatic filtration vanishes.
The \hyperlink{thm:Redshift}{Redshift Theorem} can then be interpreted as stating the following. For an $\E_\infty$-ring $R$ with $\hgt(R)\geq 0$, the following assertions are equivalent:
\begin{enumerate}
\item[$\bullet$] The $n$th graded piece of the chromatic filtration of $R$ is trivial.
\item[$\bullet$] The $(n+1)$th graded piece of the chromatic filtration of $K(R)$ is trivial.
\end{enumerate}

\subsection*{Further applications}

Many results follow quite quickly from the Purity Theorem. We list a few of them here, and refer to the body of the text for more applications and explanations.

\begin{introcor}
For a $T(1)$-acyclic ring spectrum $A$, the canonical map $K(A) \to K(A[\frac{1}{p}])$ is a $T(1)$-local equivalence.
\end{introcor}
This recovers the result of Bhatt--Clausen--Mathew \cite{BCM} for $H\Z$-algebras mentioned earlier and gives a new proof by purely homotopy and $K$-theoretic methods.
As a consequence, $T(1)$-local $K$-theory is truncating on $T(1)$-acyclic ring spectra and therefore satisfies excision, nilinvariance and cdh-descent. In addition, it is $\mathbb{A}^{1}$-homotopy invariant and thus identifies with the $T(1)$-localization of Weibel's homotopy $K$-theory,\footnote{From this fact, excision, nilinvariance, and cdh-descent also follow.} see Section~\ref{subsec:K1-local-consequences}.

As further sample applications of the Purity Theorem, we offer the following corollary, special cases of which appear in work of 
Ausoni--Rognes, Angelini-Knoll--Salch, and Hahn--Wilson \cite{AR, AR2, AKS,HW21}.

\begin{introcor}
	\begin{enumerate}
	\item For positive integers $m\neq n,n+1$, the spectrum $K(K(n))$ is $T(m)$-acyclic.
	\item The map $K(BP\langle n \rangle) \to K(E(n))$ is a $T(m)$-equivalence for $m \geq n+1$, and both terms vanish $T(m)$-locally for $m\geq n+2$.
	\end{enumerate}
\end{introcor}

See Section~\ref{subsec:examples1} for details, further results, and a discussion of  (ii) in light of a question of Rognes.
The following two corollaries are discussed in Section~\ref{subsec:KTC}.

\begin{introcor}
Let $A$ be a ring spectrum. For $n\geq 2$, the map induced by the connective cover and the cyclotomic trace
\[
K(A) \longleftarrow K(\tau_{\geq 0}A) \lto \TC(\tau_{\geq 0}A)
\]
are $T(n)$-local equivalences.
\end{introcor}
Together with results of \cite{LRRV, CMM}, this gives the following Farrell--Jones type equivalence. 
We denote by $\mathscr{O}_{\mathscr{C}}(G)$ the orbit category of a group $G$ with respect to the family of cyclic subgroups, that is, the full subcategory of the category of $G$-spaces on transitive $G$-sets of the form $G/H$ with $H \subseteq G$ a cyclic subgroup. For a ring spectrum $A$ and any group $G$, we write $AG = A \otimes \Sigma^\infty_+ G$ for the group ring of $G$ with coefficients in $A$.
\begin{introcor}
For a ring spectrum $A$, a group $G$, and $n\geq 2$, the assembly map in algebraic $K$-theory for the family of cyclic subgroups
	\[
	\colim\limits_{G/H \in \mathscr{O}_\mathscr{C}(G)} K(AH) \lto K(AG) 
	\]
	is a $T(n)$-local equivalence.
\end{introcor}

\bigskip

\noindent \textsc{Conventions.}
We fix a prime number $p$ which will be the implicit prime in all Morava $K$-theories $K(i)$ and $T(i)$ below.
We adopt the convention that $K(0) = H\Q$.
Whenever we speak of a ring spectrum, we mean an $\E_1$-ring spectrum, i.e.~an algebra in the symmetric monoidal $\infty$-category $\Sp$ of spectra. By a  module over a ring spectrum we mean  a right module.
Given an $\E_k$-ring spectrum $R$ for $k\geq 2$, an $R$-algebra is an algebra in the monoidal $\infty$-category $\RMod(R)$ of $R$-modules. 
For a spectrum $E$, we denote by $L_{E}$ the Bousfield localization functor at $E$.
For a spectrum $X$ and a pointed space $Y$, we write $X \otimes Y$ for the smash product $X \otimes \Sigma^{\infty}Y$.

\bigskip

\noindent \textsc{Acknowledgements.}
We are very grateful to Dustin Clausen for generously sharing his ideas and
numerous helpful discussions, and to Ben Antieau for his input and interest. We
also thank two anonymous referees for their comments. We heartily thank 
Ishan Levy for allowing us to include \Cref{prop:tstructure}. 
Finally, we thank Gijs Heuts, Niko Naumann, George Raptis, and Allen Yuan for valuable discussions and Ulrich Bunke and Lars Hesselholt for helpful comments on a previous version.

\section{Preliminaries}

\subsection{Preliminaries from chromatic homotopy theory}\label{recollections}

For an integer $n\geq 1$, we denote by $V_n$ a type $n$-complex, i.e.~a pointed finite CW-complex with 
$K(i)\otimes V_n=0$ for $i<n$ and $K(n)\otimes V_n \neq 0$. We denote by $v_n$  a $v_n$-self map of $V_n$, i.e.~a map $\Sigma^d V_n \to V_n$ for some positive integer $d$ inducing an isomorphism on $K(n)$-homology and nilpotent maps on $K(i)$-homology for $i\neq n$. Such maps exist by \cite{HS}. 

If $X$ is a pointed space or a spectrum, we define its $v_n$-periodic homotopy groups $v_n^{-1}\pi_*(X;V_n)$ by the formula 
\[v_n^{-1}\pi_*(X;V_n) = \Z[v_n^{\pm1}] \otimes_{\Z[v_n]} \pi_*\mathrm{Map}_*(V_n,X).\]

\begin{dfn}
We call a map of pointed spaces or spectra a \emph{$v_n$-periodic equivalence} (with $n\geq 1$) if it induces an isomorphism on $v_n$-periodic homotopy groups. A map of spectra or simple spaces is a \emph{$v_0$-periodic equivalence} if it becomes an equivalence after inverting $p$, and the $v_0$-periodic homotopy groups are by definition the homotopy groups with $p$ inverted. For a spectrum $E$, we say that another spectrum $X$ is $E$-\emph{acyclic} if $E\otimes X =0$ and say that a map is an $E$-\emph{equivalence} if its fibre is $E$-acyclic.
\end{dfn}

For a fixed pair $(V_n,v_n)$ we denote by $T(n)= \Sigma^\infty V_n[v_n^{-1}]$ the telescope of $v_n$. We adopt the convention that $T(0) = \bbS\adj$. We recall that the Bousfield class of a spectrum $E$ is the full subcategory of $\Sp$ consisting of the $E$-acyclic spectra.
For the convenience of the reader not familiar with chromatic homotopy theory, we note the following  well-known properties.

\begin{lemma}\label{lem:basicproperties} 
Let $X$ be a spectrum and $Y$ be a pointed space. 
\begin{enumerate}
	\item We have $v_n^{-1}\pi_*(X;V_n) \cong v_n^{-1}\pi_*(\Omega^{\infty}X;V_n)$.
	\item The maps $\tau_{\geq k}X \to X$ and $\tau_{\geq k}Y \to Y$ are $v_n$-periodic equivalences for all $k$ and all $n\geq 1$. 
	\item The spectra $K(m)$ are $T(n)$-acyclic for $n \neq m$.
	\item Any $T(n)$-acyclic spectrum is $K(n)$-acyclic.
	\item A spectrum which is $\bbS/p$-acyclic is also $T(n)$-acyclic for all $n\geq 1$.
	\item The map $X \to X^\swedge_p$ is a $T(n)$-equivalence for all $n \geq 1$.
	\item The Bousfield class of $T(n)$ does not depend on the choice of $(V_n,v_n)$.
	\item A spectrum is $T(n)$-acyclic if and only if its $v_n$-periodic homotopy groups vanish. 
\end{enumerate}
\end{lemma}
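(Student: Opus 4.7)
My approach is to reduce the eight assertions to four standard facts about a type-$n$ complex $V_n$ and its $v_n$-self map $v_n\colon \Sigma^d V_n \to V_n$: (a) $V_n$ lies in the thick subcategory of finite spectra generated by $\bbS/p$; (b) the Spanier--Whitehead dual $V_n^\vee$ is again type $n$ and admits a $v_n$-self map; (c) all telescopes $V_n[v_n^{-1}]$ built from any type-$n$ complex with a $v_n$-self map have the same Bousfield class; and (d) $K(n)$ is a field spectrum, so every nonzero $K(n)$-module contains a shift of $K(n)$ as a retract. Items (a) and (c) rely on the Hopkins--Smith nilpotence and periodicity theorems, which I would cite rather than reprove.

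Granting these, several items are immediate. For (iii), $v_n$ acts nilpotently on $K(m)\otimes V_n$ for $m\neq n$ by definition, so $K(m)\otimes T(n) = 0$. For (v), fact (a) gives the chain of implications $X\otimes \bbS/p = 0 \Rightarrow X\otimes V_n = 0 \Rightarrow X\otimes T(n) = 0$. For (vi), the fibre of $X \to X^\swedge_p$ is $\bbS/p$-acyclic (since both spectra have the same mod $p$ homology), so (v) applies. Assertion (vii) is a restatement of (c). For (iv), $v_n$ acts invertibly on $V_n\otimes K(n)$, so $T(n)\otimes K(n) \simeq V_n\otimes K(n)$; if $X\otimes T(n) = 0$, then $X\otimes V_n\otimes K(n) = 0$, and since $V_n\otimes K(n)$ has $K(n)$ as a retract by (d), we conclude $X\otimes K(n) = 0$.

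For (i), the mapping spectrum $\mathrm{Map}_*(V_n,X)$ and the mapping space $\mathrm{Map}_*(V_n,\Omega^\infty X)$ have equal homotopy groups in sufficiently positive degrees by the usual adjunction for finite pointed CW complexes; iterating the positive-degree self-map $v_n$ makes every degree sufficiently positive, so the two agree after inverting $v_n$. The argument for (ii) is analogous: the fibre $\tau_{\leq k-1} X$, respectively $\tau_{\leq k-1} Y$, has bounded homotopy, so its mapping complex out of $V_n$ is bounded, and inverting $v_n$ kills its periodic contribution. For (viii), Spanier--Whitehead duality gives $\mathrm{Map}_*(V_n,X) \simeq V_n^\vee \otimes X$, and via (b) and (c) the telescope built from $V_n^\vee$ has the same Bousfield class as $T(n)$; hence $v_n^{-1}\pi_*(X;V_n) = 0$ is equivalent to $T(n)\otimes X = 0$.

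The main technical input I would import rather than reprove is (c)/(vii), resting on the nilpotence and periodicity theorems of Devinatz--Hopkins--Smith; all other steps are formal consequences of the definitions together with the finiteness and field-spectrum properties above.
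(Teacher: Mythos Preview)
Your proposal is correct and follows essentially the same approach as the paper's proof: both arguments handle (i)--(ii) via the positivity of the degree of $v_n$, (iii) via nilpotence of $v_n$ on $K(m)$-homology, (iv) via the field property of $K(n)$, (vi) via (v), and (vii)--(viii) via the thick subcategory/periodicity theorems together with Spanier--Whitehead duality. The only cosmetic difference is in (v), where you package the argument through the thick subcategory generated by $\bbS/p$, while the paper argues directly that $p^k=0$ on $T(n)$ forces $X\otimes T(n)$ to be a retract of $X/p^k\otimes T(n)=0$; these are the same content. One minor imprecision: in (iv) you should say a \emph{shift} of $K(n)$ is a retract of $V_n\otimes K(n)$, not $K(n)$ itself, but this does not affect the conclusion.
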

\begin{proof}
Part (i) follows immediately from the definitions and the equivalence $\Map_*(V_n,X) \simeq \Map_*(V_n,\Omega^\infty X)$. 
Assertion (ii) follows from the observation that the $v_n$-periodic homotopy groups of a bounded above spectrum or space vanish. This in turn follows from the fact that the degree $d$ of the self map $v_n$ is positive. 
Claim (iii) follows from the fact that $K(m)\otimes T(n) \simeq (K(m) \otimes V_n)[v_n^{-1}]$ which vanishes as $v_n$ is nilpotent on Morava $K$-homology if $n$ is different from $m$. 
To see (iv), assume that $X$ is $T(n)$-acyclic. Then we have $0 = K(n) \otimes T(n) \otimes X$. But $K(n)\otimes T(n) \neq 0$, so, since $K(n)$ is a field spectrum (any module is a direct sum of shifted copies of $K(n)$), we must have $K(n)\otimes X = 0$.

For part (v) observe that some power of $p$, say $p^k$, is zero on $V_{n}$ and hence on $T(n)$.
Given an $\bbS/p$-acyclic spectrum $X$, we have $X/p^k = 0$. Thus we see that
\[ 
0 = X/p^k \otimes T(n) \simeq X\otimes T(n)/p^k \simeq X \otimes (T(n)\oplus \Sigma T(n)),
\]
and the latter term has $X \otimes T(n)$ as a retract. Thus $X$ is $T(n)$-acyclic.
Statement (vi) follows from (v), since the fibre of $X \to X^\swedge_p$ is $\bbS/p$-acyclic.

For (vii), as in \cite[Lemma~2.1]{MR1328754}, we fix a pair $(V_n,v_n)$ and consider the full subcategory of finite $p$-local spectra consisting of those $Y$ which admit a $v_n$-self map $y$ and such that $T(n)\otimes Z=0$ implies that $Y[y^{-1}]\otimes Z = 0$ as well. 
This is a thick subcategory, as follows from \cite[Corollary 3.8]{HS}. Since it contains $V_n$, this thick subcategory is given by the thick subcategory of finite spectra of type at least $n$, see \cite[Theorem 7]{HS}. 
Hence if $T(n)\otimes Z = 0$ and $(V_n',v_n')$ is another choice, also $T(n)'\otimes Z = 0$. Running the same argument also with $V_n'$ instead of $V_n$ gives the claim.

To see (viii), consider a spectrum $X$ and observe that we may calculate its $v_n$-periodic homotopy groups using the mapping spectrum $\map(V_n,X)$ instead of the mapping space $\Map(V_n,X)$ due to the positivity of the degree of the self-map $v_n$. Thus, the $v_n$-periodic homotopy groups of $X$ are isomorphic to the homotopy groups of the spectrum $(DV_n\otimes X)[Dv_n^{-1}]$, where $DV_n$ denotes the dual of the finite spectrum $\Sigma^\infty V_n$ (which is again of type $n$). This spectrum is equivalent to $T(n)\otimes X$ where $T(n)$ is the telescope of $Dv_n$. The claim then follows from (vii).
\end{proof}

We remark that the converse of statement (iv) (for $n\geq 1$) is the content of the telescope conjecture. It is known 
\cite{Mahowald,
Miller} 
to be true in height $n=1$ and was recently disproved at all higher heights and all primes
\cite{BHLS}. 

We thank Dustin Clausen for help with the following lemma, which is a
consequence of the nilpotence theorem.  
\begin{lemma}\label{telescope conjecture on ring spectra}
Let $R$ be a ring spectrum, and $n \geq 1$ an integer. Then $R$ is $K(n)$-acyclic if and only if it is $T(n)$-acyclic.
\end{lemma}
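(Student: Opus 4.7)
My plan is as follows: the forward direction is part (iv) of \Cref{lem:basicproperties}, so I focus on the converse. Assume $R$ is $K(n)$-acyclic, and fix a type-$n$ complex $V_n$ together with a $v_n$-self-map $v \colon \Sigma^d V_n \to V_n$ realizing $T(n)$. I want to show $T(n) \otimes R = (V_n \otimes R)[v^{-1}] = 0$, i.e.\ that the self-map $v \otimes \id_R$ of $V_n \otimes R$ is nilpotent. Using dualizability of $V_n$, I would first observe that the spectrum
\[
\mathcal{E} := F(V_n, V_n) \otimes R \simeq F(V_n, V_n \otimes R)
\]
carries a natural homotopy-associative multiplication $(f \otimes r) \cdot (g \otimes s) = (f \circ g) \otimes (rs)$, combining composition of endomorphisms of $V_n$ with the product on $R$. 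Under the displayed equivalence, $v$ defines an element $\tilde v \in \pi_d \mathcal{E}$ whose $k$-fold power is precisely $v^k \otimes \id_R$, so it will suffice to prove that $\tilde v$ is nilpotent in $\pi_* \mathcal{E}$.

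For this I would invoke the nilpotence theorem \cite{HS}: an element of $\pi_* \mathcal{E}$ is nilpotent if and only if its image in $K(m)_* \mathcal{E}$ is nilpotent for every $0 \leq m \leq \infty$, where $K(0) = H\Q$ and $K(\infty) = H\F_p$. The case $m = n$ is immediate, since the hypothesis $K(n) \otimes R = 0$ gives $K(n)_* \mathcal{E} = 0$. For a positive integer $m \neq n$, the defining property of a $v_n$-self-map ensures that $v$ acts nilpotently on $K(m)_* V_n$; combined with the Künneth isomorphism $K(m)_* F(V_n, V_n) \cong \End_{K(m)_*}(K(m)_* V_n)$ (valid since $V_n$ is finite and $K(m)$ is a field spectrum), this translates into $\tilde v$ being already nilpotent in $K(m)_* F(V_n, V_n)$, and hence in $K(m)_* \mathcal{E}$. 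For the remaining boundary cases, $K(0) \otimes V_n = 0$ because $V_n$ is $p$-torsion (as $n \geq 1$), while $H\F_p \otimes V_n$ is a bounded graded $\F_p$-vector space on which the degree-$d$ map $v$ is automatically nilpotent.

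The hard part will be none of the above: the only substantive conceptual step is the packaging of the $v_n$-self-map as a homotopy class in an honest ring spectrum, after which verification of the hypotheses of the nilpotence theorem is essentially bookkeeping. I anticipate no serious obstacle, since the level of structure on $\mathcal{E}$ required by the nilpotence theorem is just that of a ring object in the stable homotopy category, which is manifestly provided by the composition-and-product multiplication described above.
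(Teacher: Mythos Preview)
Your proof is correct and follows essentially the same approach as the paper: both apply the nilpotence theorem to the homotopy ring spectrum $F(V_n,V_n)\otimes R$ (which the paper calls $W_n\otimes R$). The only difference is that the paper first passes to the telescope $W_n[w^{-1}]$ and checks that the \emph{unit} of $T(n)\otimes R$ is nilpotent---requiring an appeal to \cite[Theorem~11]{HS} to make a power of $w$ central so that the localization is again a ring---whereas you apply the nilpotence theorem directly to the element $\tilde v$ before telescoping, which sidesteps that centrality step.
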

\begin{proof}
The ``if''-part follows from \cref{lem:basicproperties}(iv). To see the ``only if'' statement, we argue first that one can assume that $T(n)$ is a ring spectrum. Indeed, by replacing $V_n$ by $W_n= V_n \otimes DV_n \simeq \End(\Sigma^\infty V_n)$, we can assume that the suspension spectrum of our type $n$-complex is an $\E_1$-ring spectrum. Moreover, the $v_n$-self-map of $V_n$ defines an element $w \in \pi_*(W_n)$, multiplication with which is a $v_n$-self map again. By \cite[Theorem 11]{HS} a power of $w$ lies in the center of $\pi_*(W_n)$. Thus the localization $W_{n}[w^{-1}]$ admits the structure of an $\E_1$-ring spectrum. As the Bousfield class of $T(n)$ does not depend on the choice of the type $n$ complex, we can thus indeed assume that $T(n)$ is a ring spectrum. 
We then observe that a ring spectrum like $T(n)\otimes R$ is zero if and only if its unit is nilpotent. By the nilpotence theorem \cite[Theorem 3]{HS}, this is the case if and only if $K(m) \otimes (T(n)\otimes R) = 0$ for all $0 \leq m \leq \infty$. If $m\neq n$ then $K(m) \otimes T(n) \otimes R=0$ as $K(m)\otimes T(n)=0$ by \cref{lem:basicproperties}(iii). Since $R$ is $K(n)$-acyclic, also $K(n)\otimes T(n)\otimes R = 0$. We thus conclude that $T(n) \otimes R$ vanishes.
\end{proof}

\begin{rem}
In the proof above, it was not used that $R$ is an algebra in the $\infty$-category of spectra. It suffices that $R$ is a unital magma in the homotopy category of spectra.
\end{rem}

\subsection{Some localization functors}

We recall that the functor $L_{n}^{f}$ on spectra is defined as Bousfield localization at the spectrum $H\Q\oplus T(1) \oplus \dots \oplus T(n)$. To formulate our main result, we will use the following variant.

\begin{dfn}
We denote by $L_{n}^{p,f}$ the Bousfield localization at $T(0) \oplus T(1) \oplus \dots \oplus T(n)$.
\end{dfn}

Recall that a Bousfield localization functor $L\colon \Sp \to \Sp$ is called smashing if it preserves colimits or, equivalently, if it is of the form $LX \simeq X \otimes L\bbS$. If every  $L$-acyclic spectrum is a colimit of compact, $L$-acyclic spectra, then $L$ is called finite and is in particular smashing, see \cite{Miller-finite} or \cite[Lecture 20, Example 12]{LurieLecturesChromatic}. For example, $L_{n}^{f}$ is smashing. The same proof also shows that $L_{n}^{p,f}$ is smashing. For convenience, we recall this proof below.
Write  $\cC_{>n}$ for the $\infty$-category of $p$-local, finite spectra which are of type greater than $n$, i.e.~which are $K(0) \oplus \dots \oplus K(n)$-acyclic.

\begin{lemma}
	\label{lem:Lnpf-is-finite}
The category of $L_{n}^{p,f}$-acyclic spectra coincides with $\Ind(\cC_{>n})$. In particular, $L_{n}^{p,f}$ is a smashing localization.
\end{lemma}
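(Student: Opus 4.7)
The plan is to identify the $L_n^{p,f}$-acyclic subcategory of $\Sp$ with $\Ind(\mathcal{C}_{>n})$; the smashing assertion will then follow automatically because the objects of $\mathcal{C}_{>n}$ are compact in $\Sp$, so $L_n^{p,f}$ is a finite localization in the sense recalled in the excerpt.

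For the easier inclusion $\Ind(\mathcal{C}_{>n}) \subseteq \{L_n^{p,f}\text{-acyclic}\}$, I would observe that the right-hand side is closed under colimits, so the task reduces to checking $T(i) \otimes F = 0$ for $0 \le i \le n$ and $F \in \mathcal{C}_{>n}$. For $i=0$: $F$ is finite $p$-local with $H\Q \otimes F = 0$, so its homotopy groups are $p$-power torsion and $F[1/p] = 0$. For $1 \le i \le n$: writing $T(i) \otimes F \simeq (V_i \otimes F)[v_i^{-1}]$ and invoking the nilpotence theorem (as in \Cref{telescope conjecture on ring spectra}), the self-map $v_i$ is nilpotent on $V_i \otimes F$ since $v_i$ is already nilpotent on $K(j) \otimes V_i$ for $j \ne i$ and $K(i) \otimes F = 0$ (as $F$ has type $> n \ge i$); hence $T(i) \otimes F = 0$.

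For the reverse inclusion, I would introduce the Bousfield localization $L'$ whose acyclic subcategory is exactly $\Ind(\mathcal{C}_{>n})$; this $L'$ is smashing, and its local objects are characterized by $\Map(W, -) = 0$ for all $W \in \mathcal{C}_{>n}$. The first step then shows that every $L_n^{p,f}$-local spectrum is $L'$-local. Conversely, given an $L_n^{p,f}$-acyclic $X$, the colocalization sequence $R'X \to X \to L'X$ presents $L'X$ as a spectrum that is simultaneously $L'$-local and $L_n^{p,f}$-acyclic; so the reverse inclusion reduces to the claim that any spectrum $Y$ which is both $L'$-local and $L_n^{p,f}$-acyclic vanishes.

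The hard part will be this last vanishing statement. I plan to prove it by downward induction. Choose, using \cite{HS}, finite type $i$ spectra $V_1 = \mathbb{S}/p, V_2, \ldots, V_{n+1}$ assembled as cofibers $\Sigma^{d_i} V_i \xrightarrow{v_i} V_i \to V_{i+1}$ of $v_i$-self maps. Since $V_{n+1} \in \mathcal{C}_{>n}$ is compact and $\mathcal{C}_{>n}$ is stable under Spanier--Whitehead duality, the $L'$-locality of $Y$ yields $V_{n+1} \otimes Y \simeq \Map(DV_{n+1}, Y) = 0$. Tensoring the cofiber sequence above with $Y$ shows that $v_n$ acts as an equivalence on $V_n \otimes Y$, hence $V_n \otimes Y \simeq T(n) \otimes Y = 0$ by the $L_n^{p,f}$-acyclicity of $Y$. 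Iterating this step downward yields $V_i \otimes Y = 0$ for all $1 \le i \le n+1$. At $i = 1$ this reads $Y/p = 0$, so $p$ acts as an equivalence on $Y$; combining with $Y[1/p] = T(0) \otimes Y = 0$ forces $Y = 0$, completing the proof.
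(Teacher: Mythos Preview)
Your proof is correct. The overall strategy coincides with the paper's at the core inductive step, but the packaging differs.

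The paper phrases the two inclusions as the statement that the Bousfield classes $\langle T(0)\oplus\dots\oplus T(n)\rangle$ and $\langle \Sigma^{\infty}V_{n+1}\rangle$ are complementary; both parts are then deduced simultaneously from the tower $V_{i+1}=V_i/v_i$, and the passage from ``complementary Bousfield classes with one side finite'' to ``acyclics $=\Ind(\mathcal C_{>n})$'' is outsourced to \cite[Proposition~3.3]{MR1328754} together with the thick subcategory theorem. Your argument replaces this external citation by constructing the finite localization $L'$ with acyclics $\Ind(\mathcal C_{>n})$ directly and using the colocalization triangle, so it is more self-contained; the vanishing statement you isolate (that $L'$-local and $L_n^{p,f}$-acyclic implies zero) is exactly the paper's claim that only $0$ is $T(0)\oplus\dots\oplus T(n)\oplus V_{n+1}$-acyclic, proved by the same downward induction along $V_{i+1}=V_i/v_i$. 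One minor remark: for the easy inclusion you invoke the nilpotence theorem to see $T(i)\otimes F=0$, which is valid but heavier than needed; the paper's route via the thick subcategory theorem (reduce to $F=V_{n+1}$ and use that $1\otimes v_i$ and $v_i\otimes 1$ agree up to powers on $V_i\otimes V_i$) avoids it.
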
 
\begin{proof}
The Bousfield class $\langle T(0) \oplus T(1) \oplus \dots \oplus T(n) \rangle$ has as complement the Bousfield class $\langle \Sigma^{\infty}V_{n+1} \rangle$ of a type $(n+1)$-spectrum: every spectrum is acyclic for $(T(0) \oplus T(1) \oplus \dots \oplus T(n)) \otimes V_{n+1}$ and $0$ is the only spectrum which is $T(0) \oplus T(1) \oplus \dots \oplus T(n) \oplus \Sigma^{\infty}V_{n+1}$-acyclic. Indeed, this follows easily from the inductive construction of a type $(k+1)$-complex as $V_{k}/v$, where $V_{k}$ is a type $k$-complex with $v_{k}$-self map $v$. It follows from \cite[Proposition 3.3]{MR1328754} that every $L_{n}^{p,f}$-acyclic spectrum is a colimit of finite $L_{n}^{p,f}$-acyclic spectra. The thick subcategory theorem implies that the category of finite $L_{n}^{p,f}$-acyclic spectra is precisely $\cC_{>n}$.
\end{proof}

\begin{lemma} 
	\label{lem:pullback-Lnpfs}
For integers $0 \leq m < n$ and a spectrum $X$ there is a  pullback diagram
\[
\begin{tikzcd}
 L_{n}^{p,f}X \ar[d]\ar[r] & L_{T(m+1) \oplus \dots \oplus T(n)}X \ar[d] \\ 
 L_{m}^{p,f}X \ar[r] & L_{m}^{p,f} L_{T(m+1) \oplus \dots \oplus T(n)}X
\end{tikzcd}
\]
natural in $X$.
\end{lemma}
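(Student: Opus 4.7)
The plan is to verify the lemma by identifying the total fibre of the candidate pullback square and showing it vanishes, in the spirit of standard chromatic fracture arguments. Abbreviate $E_1 := T(0) \oplus \cdots \oplus T(m)$, $E_2 := T(m+1) \oplus \cdots \oplus T(n)$, and $E := E_1 \oplus E_2$, so that $L_{m}^{p,f} = L_{E_1}$ and $L_{n}^{p,f} = L_E$. Write $L := L_E$, $L' := L_{E_1}$, and $M := L_{E_2}$. Since an $E$-equivalence is both an $E_1$- and an $E_2$-equivalence, every $E_1$- or $E_2$-local spectrum is automatically $E$-local. Consequently all four corners of the square are $L$-local, the pullback $P := L'X \times_{L'MX} MX$ is $L$-local, and the canonical comparison map $LX \to P$ is a morphism between $L$-local objects; it will therefore be an equivalence as soon as its fibre, equivalently the total fibre of the square, is $L$-acyclic.

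To identify the total fibre I would work in the horizontal direction. Let $C := \fib(X \to MX)$; by construction $C$ is $E_2$-acyclic. Because $MX$ is $L$-local we have $LMX \simeq MX$, so the top horizontal map of the square is obtained by applying $L$ to $X \to MX$, and by exactness of Bousfield localization on the stable category of spectra its fibre is $LC$. Analogously, the bottom horizontal map is the $L'$-localization of $X \to MX$ with fibre $L'C$. The total fibre of the square is therefore $\fib(LC \to L'C)$.

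Now I would use smashingness. Both $L$ and $L'$ are smashing --- the former by \Cref{lem:Lnpf-is-finite}, the latter by the same argument with $m$ in place of $n$ --- so $\fib(LC \to L'C)$ identifies with $C \otimes F$, where $F := \fib(L\bbS \to L'\bbS) \simeq L\bbS \otimes \fib(\bbS \to L'\bbS)$ (using smashingness of $L'$ to rewrite the right hand factor). In particular $F$ is $E_1$-acyclic. The spectrum $C \otimes F$ is then $E_1$-acyclic (because $F$ is) and $E_2$-acyclic (because $C$ is), hence $E$-acyclic. On the other hand, the smashing property of $L$ gives $C \otimes F \simeq L(C \otimes \fib(\bbS \to L'\bbS))$, so $C \otimes F$ is $L$-local. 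Being both $L$-local and $L$-acyclic it vanishes, showing that the square is a pullback; naturality in $X$ is visible from the construction.

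The main obstacle I expect is purely bookkeeping: correctly identifying the horizontal fibres of the square and tracking how the smashingness of both localizations converts the decomposition $E = E_1 \oplus E_2$ into the two acyclicity inputs above. Once the smashing conclusion of \Cref{lem:Lnpf-is-finite} is in hand, the argument is essentially formal.
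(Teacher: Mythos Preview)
Your proof is correct. The paper takes a slightly different route: it defers to a general fracture lemma (\cref{localization-lemma}) asserting that whenever $L_{E}$ preserves $F$-acyclic spectra, the square with corners $L_{E\oplus F}X$, $L_F X$, $L_E X$, $L_E L_F X$ is a pullback, and then applies it with $E = T(0)\oplus\cdots\oplus T(m)$ and $F = T(m+1)\oplus\cdots\oplus T(n)$, using only that $L_m^{p,f}$ is smashing to verify the hypothesis. Your argument instead computes the total fibre directly and uses smashingness of \emph{both} $L_m^{p,f}$ and $L_n^{p,f}$. The paper's packaging is a bit more general (it applies with any $F$ in place of $T(m+1)\oplus\cdots\oplus T(n)$, and needs no smashing hypothesis on $L_{E\oplus F}$), while your version is more explicit and self-contained.

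One phrasing to tighten: ``exactness of Bousfield localization on the stable category of spectra'' is not true for arbitrary Bousfield localizations. The identifications $\fib(LX\to MX)\simeq LC$ and $\fib(L'X\to L'MX)\simeq L'C$ are correct here precisely because $L$ and $L'$ are smashing (so given by tensoring with $L\bbS$ and $L'\bbS$, which is exact); you invoke this anyway in the next paragraph, so just move that justification up.
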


\begin{proof}
This is a special case of  the following well known lemma.
\end{proof}

\begin{lemma}\label{localization-lemma}
Let $E$ and $F$ be spectra. Assume that $L_{E}$ preserves $F$-acyclic spectra. Then there is a pullback diagram 
\[
\begin{tikzcd}
 L_{E\oplus F}X \ar[d]\ar[r] & L_{F}X \ar[d] \\ 
 L_{E}X \ar[r] & L_{E}L_{F}X
\end{tikzcd}
\]
natural in $X$.
\end{lemma}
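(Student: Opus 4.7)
The plan is to construct the comparison map from $L_{E\oplus F}X$ into the pullback via universal properties, and then to verify it is an equivalence by smashing the square separately with $E$ and with $F$.

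First, I would form the pullback $P = L_E X \times_{L_E L_F X} L_F X$. Since every $E \oplus F$-acyclic spectrum is in particular both $E$- and $F$-acyclic, the spectra $L_E X$, $L_F X$, and $L_E L_F X$ are all $E \oplus F$-local, hence so is $P$. The canonical maps $X \to L_E X$ and $X \to L_F X \to L_E L_F X$ assemble into a natural map $X \to P$, which by the universal property of $L_{E \oplus F}$ factors uniquely and naturally through a map $L_{E \oplus F} X \to P$. Because $P$ is $E \oplus F$-local, it suffices to show that $X \to P$ is an $E \oplus F$-equivalence, i.e.\ both an $E$- and an $F$-equivalence.

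For the $E$-equivalence, smash the defining square of $P$ with $E$: both vertical maps $X \to L_E X$ and $L_F X \to L_E L_F X$ are $E$-equivalences by construction of $L_E$, so they become equivalences after $\otimes E$; a commutative square with both verticals equivalences is automatically a pullback, hence $X \otimes E \to P \otimes E$ is an equivalence. For the $F$-equivalence, smash with $F$: the top horizontal $X \to L_F X$ becomes an equivalence by construction of $L_F$, while the bottom horizontal $L_E X \to L_E L_F X$ is $L_E$ applied to $X \to L_F X$, so its fiber is $L_E(C_F X)$ for $C_F X = \fib(X \to L_F X)$, which is $F$-acyclic. Here the hypothesis enters: since $L_E$ preserves $F$-acyclics, $L_E(C_F X)$ is $F$-acyclic, so the bottom horizontal is an $F$-equivalence, the square is a pullback after $\otimes F$, and $X \otimes F \to P \otimes F$ is an equivalence. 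Combining the two shows $X \to P$, and hence $L_{E \oplus F} X \to P$, is the desired equivalence.

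The only nontrivial input is the assumption on $L_E$, and its sole role is to promote the bottom horizontal to an $F$-equivalence; everything else is formal. Accordingly, I do not expect any substantive obstacle beyond identifying where the hypothesis must intervene.
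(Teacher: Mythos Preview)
Your proof is correct and follows exactly the approach sketched in the paper: form the pullback $P$, check it is $E\oplus F$-local, and verify that the canonical map $X\to P$ is an $E\oplus F$-equivalence. The paper leaves the verification as ``easy to show''; you have supplied precisely the expected details, including the one place the hypothesis on $L_E$ is used (to make the bottom horizontal an $F$-equivalence).
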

We note that the assumptions of the lemma are for instance satisfied if $L_E$ is smashing, or if $L_F$ annihilates $E$-local objects.
\begin{proof} 
Denote the pullback of the diagram $L_{E}X \to L_{E}L_{F}X \leftarrow L_{F}X$ by $P(X)$. There is a canonical map $X \to P(X)$; it is  easy to show that this map is an $(E\oplus F)$-local equivalence, and that $P(X)$ is $(E\oplus F)$-local.
\end{proof}

As a consequence of \cref{localization-lemma} we note that
\begin{enumerate}
\item for $p$-local spectra $X$, the canonical map $L_n^{p,f}X \to L_n^f X$ is an equivalence, and
\item for $T(1)$-acyclic spectra $X$, the canonical map $L_1^{p,f}X \to X\adj$ is an equivalence.
\end{enumerate}

We will use the following criterion to detect $T(i)$-local equivalences, which was indicated to us by 
Gijs Heuts.
\begin{prop}\label{T(i)-detects}
Let $f \colon X \to Y$ be a map of spectra, and let $i \geq 1$ be an integer. If $\Sigma^\infty\Omega^\infty f$ is a $T(i)$-local equivalence, then so is $f$. In other words, the functor $\Sigma^\infty \Omega^\infty \colon \Sp \to \Sp$ detects $T(i)$-local equivalences.
\end{prop}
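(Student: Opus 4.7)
The plan is to translate the statement about $T(i)$-local equivalences of spectra into a question about $v_i$-periodic equivalences of pointed spaces, and then invoke a classical theorem of Kuhn.

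First, I would reformulate the problem as follows: a map $g$ of spectra is a $T(i)$-local equivalence if and only if $\Omega^\infty g$ is a $v_i$-periodic equivalence of pointed spaces. Indeed, by \Cref{lem:basicproperties}(viii), being a $T(i)$-equivalence of spectra is equivalent to inducing an isomorphism on $v_i^{-1}\pi_*(-;V_i)$ (using that $\mathrm{map}(V_i,-)$ preserves fibre sequences and that inverting $v_i$ is exact), and by \Cref{lem:basicproperties}(i) these groups for a spectrum $X$ agree naturally with the corresponding $v_i$-periodic homotopy groups of the pointed space $\Omega^\infty X$. Applying this translation to the spectrum map $\Sigma^\infty \Omega^\infty f$ turns the hypothesis into the assertion that $\Omega^\infty \Sigma^\infty \Omega^\infty f$ is a $v_i$-periodic equivalence of pointed spaces, while the desired conclusion becomes that $\Omega^\infty f$ is one.

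The crux is then a classical theorem of Kuhn, underlying the construction of the Bousfield--Kuhn functor: for every pointed space $Y$ and every $i\geq 1$, the adjunction unit $\eta_Y\colon Y \to \Omega^\infty \Sigma^\infty Y$ is a $v_i$-periodic equivalence of pointed spaces. Granting this, applying naturality of $\eta$ to the space-level map $\Omega^\infty f$ yields a commutative square
\[
\begin{tikzcd}
\Omega^\infty X \ar[r,"\eta"] \ar[d,"\Omega^\infty f"'] & \Omega^\infty \Sigma^\infty \Omega^\infty X \ar[d,"\Omega^\infty \Sigma^\infty \Omega^\infty f"] \\
\Omega^\infty Y \ar[r,"\eta"] & \Omega^\infty \Sigma^\infty \Omega^\infty Y
\end{tikzcd}
\]
in which the two horizontal arrows are $v_i$-periodic equivalences by Kuhn, and the right vertical is one by the preceding step. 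By two-out-of-three, the left vertical $\Omega^\infty f$ is also a $v_i$-periodic equivalence, which by the first step implies that $f$ is a $T(i)$-local equivalence.

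The principal obstacle is the input from Kuhn: that every unit $\eta_Y$ is a $v_i$-periodic equivalence of pointed spaces. This is a nontrivial classical result (equivalent to saying the counit $\Sigma^\infty\Omega^\infty X \to X$ is $T(i)$-locally an equivalence for $i\geq 1$, by the triangle identity combined with \Cref{lem:basicproperties}(i)) which I would invoke as a black box with an appropriate citation rather than reprove.
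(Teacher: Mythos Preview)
Your argument hinges on the assertion that the unit $\eta_Y\colon Y \to \Omega^\infty\Sigma^\infty Y$ is a $v_i$-periodic equivalence for every pointed space $Y$, which you attribute to Kuhn. This statement is false, and no such theorem exists. Indeed, you yourself observe that it would be equivalent to the counit $\epsilon_X\colon \Sigma^\infty\Omega^\infty X \to X$ being a $T(i)$-equivalence for every spectrum $X$. But take $X = H\mathbb{Z}$: then $\Omega^\infty X = \mathbb{Z}$ and $\Sigma^\infty\mathbb{Z}$ contains the sphere spectrum as a retract, hence is not $T(i)$-acyclic, whereas $H\mathbb{Z}$ is $T(i)$-acyclic for $i\geq 1$. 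So the counit is not a $T(i)$-equivalence here. (This is exactly the example the paper records immediately after the proposition to show that $\Sigma^\infty\Omega^\infty$ does not \emph{preserve} $T(i)$-equivalences.) More conceptually, your claim amounts to $\Phi_i(Y)\simeq L_{T(i)}\Sigma^\infty Y$ for all spaces $Y$, where $\Phi_i$ is the Bousfield--Kuhn functor; comparing these two functors is a substantial and subtle problem in unstable chromatic homotopy theory, and they do not agree in general.

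The paper's proof avoids this pitfall by using only what the Bousfield--Kuhn functor actually provides: the factorization $L_{T(i)} \simeq \Phi_i\circ\Omega^\infty$. Applying $\Phi_i$ to the triangle identity $\Omega^\infty \to \Omega^\infty\Sigma^\infty\Omega^\infty \to \Omega^\infty$ shows that the composite $L_{T(i)} \to L_{T(i)}\Sigma^\infty\Omega^\infty \to L_{T(i)}$ is the identity. Hence $L_{T(i)}(f)$ is a \emph{retract} of $L_{T(i)}(\Sigma^\infty\Omega^\infty f)$, and a retract of an equivalence is an equivalence. The key point is that one only needs a retraction, not that the individual natural transformations $L_{T(i)} \to L_{T(i)}\Sigma^\infty\Omega^\infty$ or $L_{T(i)}\Sigma^\infty\Omega^\infty \to L_{T(i)}$ are equivalences---and in fact they are not.
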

\begin{proof}
We note that the canonical composite 
\[ \Omega^\infty \lto \Omega^\infty\Sigma^\infty \Omega^\infty \lto \Omega^\infty \]
is an equivalence. It is an insight of Bousfield and Kuhn that the $T(i)$-localization functor $L_{T(i)}$ factors through $\Omega^\infty$ via the Bousfield--Kuhn functor $\Phi_i$ from pointed spaces to spectra. There is thus an equivalence $\Phi_i \circ \Omega^\infty \simeq L_{T(i)}$; see \cite[Theorem 1.1]{Kuhn1}. Applying $\Phi_i$ to the above composite shows that the composite 
\[ L_{T(i)} \lto L_{T(i)} \Sigma^\infty \Omega^\infty \lto L_{T(i)} \]
is also an equivalence. This implies that $L_{T(i)}(f)$ is a retract of $L_{T(i)}(\Sigma^\infty\Omega^\infty f)$ which proves the lemma.
\end{proof}

\begin{rem}
Restricted to connective spectra, the functor $\Sigma^\infty\Omega^\infty$ also detects $T(0)$-local equivalences. 
\end{rem}

It is, however, not true that the functor $\Sigma^\infty\Omega^\infty$ \emph{preserves} $T(i)$-local equivalences. For example, $H\Z$ is $T(i)$-acyclic, whereas $\Sigma^\infty \Z$ is not: It contains the sphere spectrum as a summand. Nevertheless, $\Sigma^\infty\Omega^\infty$  preserves  suitably connective $L_{n}^{p,f}$-equivalences, as the following result shows.
We say that a space is $m$-connective if it has trivial homotopy groups in
degrees less than $m$, i.e., is $(m-1)$-connected. 
We call a map $m$-connective if the fibre over every basepoint is $m$-connective, i.e.\ if it induces an isomorphism on $\pi_k$ for $k<m$ and a surjection on $\pi_m$. 

\begin{prop}\label{lem:vnTnvanishing}
Let $n \geq 1$ be an integer. Then there exists $m \geq 2$ such that the
following hold: 
\begin{enumerate}
\item Let $F$ be an $m$-connective pointed space whose $v_i$-periodic homotopy groups vanish for $0\leq i\leq n$. Then $F$ is $T(i)$-acyclic for $0 \leq i \leq n$.
\item Let $f \colon X \to Y$ be an $m$-connective map between spaces. If $f$ is a $v_i$-periodic equivalence for $0 \leq i \leq n$ for every choice of basepoints, then $\Sigma^\infty f \colon \Sigma^\infty X \to \Sigma^\infty Y$ is an $L_n^{p,f}$-equivalence for every choice of basepoints.
\item The functor $\Sigma^\infty \Omega^\infty$ preserves $m$-connective $L_n^{p,f}$-equivalences.
\end{enumerate}
\end{prop}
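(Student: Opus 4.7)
The three assertions are closely linked, so my plan is to prove (ii) in full and then derive (i) and (iii) as immediate consequences. Applying (ii) to the map $F \to \ast$ will recover (i), since the hypothesis on $F$ translates exactly into that map being $m$-connective and a $v_i$-periodic equivalence. For (iii), given an $m$-connective $L_n^{p,f}$-equivalence $f\colon X \to Y$ of spectra, I would check that $\Omega^\infty f$ is itself $m$-connective and a $v_i$-periodic equivalence for each $0 \le i \le n$: for $i \ge 1$ this combines \Cref{lem:basicproperties}(i) with the fact that $L_n^{p,f}$-equivalences are $T(i)$-equivalences, and for $i = 0$ it uses that $f\adj$ is an equivalence of spectra. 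Thus (ii) will apply and yield (iii).

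For (ii), I would take $m \ge 2$ so that the homotopy fiber $G$ of $f$ is simply connected; the long exact sequence of $v_i$-periodic homotopy for the fibration $G \to X \to Y$ then shows that $G$ is $m$-connective with vanishing $v_i$-periodic homotopy for $0 \le i \le n$. I would next apply the Serre/Atiyah--Hirzebruch spectral sequence for this fibration with $T(i)$-coefficients (for $i \ge 1$) or with $\Z\adj$-coefficients (for $i = 0$): if $\widetilde{T(i)}_\ast(G) = 0$, respectively $H_\ast(G;\Z\adj) = 0$, the spectral sequence collapses and shows $\Sigma^\infty f$ is a $T(i)$-equivalence. This reduces (ii) to showing $\Sigma^\infty G$ is $T(i)$-acyclic for each $0 \le i \le n$, which is precisely (i) applied to $G$.

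So the heart of the argument lies in (i). For $i = 0$, the plan is a standard Postnikov/Serre argument: $\pi_\ast(F)\adj = 0$ implies $H_\ast(F;\Z\adj) = 0$, and for the connective spectrum $\Sigma^\infty F$ this is equivalent to its stable homotopy being $p$-power torsion, hence $T(0)$-acyclic. For $1 \le i \le n$, I would invoke the Bousfield--Kuhn functor $\Phi_i$, which satisfies $\Phi_i \circ \Omega^\infty \simeq L_{T(i)}$ and has homotopy groups equal to the $v_i$-periodic homotopy. The unit $F \to \Omega^\infty\Sigma^\infty F$ induces a natural comparison map $\Phi_i(F) \to L_{T(i)}\Sigma^\infty F$, and the goal is to show it is an equivalence for $F$ sufficiently connective, so that $\Phi_i(F) = 0$ will force $L_{T(i)}\Sigma^\infty F = 0$.

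The hard part will be this last point, which I expect to address via the Goodwillie tower of the identity functor on pointed spaces: its layers $D_k(F)$ are built from extended powers of $\Sigma^\infty F$, and Kuhn's Tate vanishing theorem should imply that once $\Phi_i(F) = 0$ each $\Phi_i(D_k(F))$ also vanishes. A convergence argument, which requires $F$ to be $m$-connective with $m$ depending on $i$ and the layer index, will then collapse the tower under $\Phi_i$ and force the comparison map to be an equivalence. Taking $m$ to be the maximum of the resulting bounds over $0 \le i \le n$ will then produce the uniform constant claimed in the proposition.
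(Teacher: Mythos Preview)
Your logical structure matches the paper's: reduce (ii) to (i) via the Serre spectral sequence in $T(i)$-homology for the fibration $G \to X \to Y$, and reduce (iii) to (ii) by applying $\Omega^\infty$. The $i = 0$ case of (i) is also handled exactly as you describe. The difference is that for (i) with $1 \leq i \leq n$ the paper does not attempt a direct argument; it simply cites a theorem of Bousfield (together with a lemma from \cite{BHM}) which supplies the integer $m$.

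Your proposed Goodwillie-tower proof of (i), however, has a genuine gap. The comparison map $\Phi_i(F) \to L_{T(i)}\Sigma^\infty F$ is \emph{not} an equivalence for general highly connective $F$: already for $F$ a sphere the higher layers $D_k(F)$ with $k \geq 2$ contribute nontrivially to $\Phi_i(F)$ (this is the Arone--Mahowald analysis of the tower at spheres). More fundamentally, the tower runs in the wrong direction for your purposes. Since each layer $D_k(F)$ is the infinite loop space of a spectrum built from $(\Sigma^\infty F)^{\otimes k}$, the quantity $\Phi_i(D_k(F))$ is controlled by $L_{T(i)}\Sigma^\infty F$---precisely what you are trying to show vanishes---and not by $\Phi_i(F)$. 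Kuhn's Tate vanishing only lets you pass between homotopy orbits and homotopy fixed points for these extended powers; it gives no mechanism for deducing $\Phi_i(D_k(F)) = 0$ from $\Phi_i(F) = 0$. Thus the implication the Goodwillie tower naturally yields, via convergence, is the converse one: $L_{T(i)}\Sigma^\infty F = 0 \Rightarrow \Phi_i(F) = 0$. Bousfield's argument for the direction you actually need is more delicate and does not proceed along these lines.
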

\begin{proof}
Part (i) follows from a result of Bousfield (\cite[Corollary 4.8]{Bousfield2}, \cite[Theorem~3.1]{BHM}) together with \cite[Lemma~3.3]{BHM}, which
gives an integer $m$ such that any $m$-connective space with trivial $v_i$-periodic
homotopy groups for $0 \leq i \leq n$ has trivial $T(i)$-homology for $0 \leq
i \leq n$. Note that in Bousfield's convention $T(0)$ is $H\mathbb{Q}$, but it is well-known that a simply connected space such that $(\pi_*X)[\frac1p]$ vanishes is also acyclic for $H\Z[\frac1p]$ and hence for $T(0) = \mathbb{S}[\frac1p]$.

To prove (ii), consider the Serre spectral sequence in $T(i)$-homology associated to the map $X \to Y$. By assumption, the fibre over every basepoint is an $m$-connective space whose $v_i$-periodic homotopy groups vanish for $i \leq n$; thus, it is $T(i)$-acyclic for $0 \leq i \leq n$ by (i). It follows that $X \to Y$ induces an isomorphism in $T(i)$-homology for $0\leq i \leq n$. Hence, $\Sigma_+^\infty X \to \Sigma_+^\infty Y$ and thus  $\Sigma^\infty X \to \Sigma^\infty Y$ (for any choice of basepoints) are $L_n^{p,f}$-equivalences.

Finally, to see (iii) one applies $\Omega^\infty$ to an $m$-connective $L_n^{p,f}$-equivalence. The resulting map of spaces satisfies the assumptions of (ii), so the proposition follows.
\end{proof}

The following remark will not be used in the sequel. 
\begin{rem} 
In fact, we can take $m = n+1$ in \cref{lem:vnTnvanishing}. 
Indeed, using what we have proven already, it suffices to show that 
 any $(n+1)$-connective space $F$ is 
 $T(i)$-acyclic for $0 \leq i \leq n$ if its homotopy groups are $p$-primary torsion and 
vanish in sufficiently high degrees. By an induction over the Postnikov tower,
it hence suffices to show that $T(i) \otimes K(\pi,r) = 0$ if $r > i$ and $\pi$
is a finite group, as $T(i)$-homology commutes with filtered colimits and every
torsion group is the filtered colimit of its finite subgroups. It is shown in
\cite[Theorem E]{CSYAmbi} that for a $p$-local ring spectrum $R$ the following
two conditions are equivalent: (1) $R \otimes K(\pi,r) = 0$ for all $r > i$ and
(2) $R\otimes K(r) = 0$ for all $r > i$. Statement (2) applies to $R=T(i)$ by
\cref{lem:basicproperties}(iii), so part (i) and hence also (ii) and
(iii) of the proposition follow with $m=n+1$.
\end{rem}

\subsection{Localizing invariants and \textit{K}-theory}
	\label{sec:loc-and-K}
In this short subsection we recall some notions and facts about algebraic $K$-theory which we will use throughout this paper.

A localizing invariant is a functor $\Cat_\infty^\perf \to \Sp$ which sends exact sequences to fibre sequences. Here $\Cat_\infty^\perf$ refers to the $\infty$-category of small, idempotent complete, and stable $\infty$-categories and exact sequences are those sequences which are both fibre and cofibre sequences in $\Cat_\infty^\perf$, see \cite[\S 5]{BGT} for details.\footnote{In \cite{BGT} localizing invariants are further required to preserve filtered colimits.}
Examples of localizing invariants are non-connective $K$-theory \cite[\S9]{BGT}, topological Hochschild homology, topological cyclic homology, etc. For a localizing invariant $E$ and a ring spectrum $A$, we will write $E(A)$ for $E(\Perf(A))$, where $\Perf(A)$ denotes the $\infty$-category of perfect $A$-modules, which coincides with the compact objects of $\RMod(A)$. 

For a connective ring spectrum $A$, the space $\Omega^\infty \tau_{\geq 1} K(A)$ can be described as a plus-con\-struction \cite[Lemma 9.39]{BGT}: We denote by $\GL(A)$ the $\E_1$-space $\GL(A) = \colim \GL_n(A)$, where $\GL_n(A)$ denotes the invertible components in the $\E_1$-space $\Omega^\infty \End(A^n)$. 
In particular, $\pi_{0}(\GL(A)) \cong \GL(\pi_{0}(A))$, where the right-hand side denotes the group of invertible matrices over the discrete ring $\pi_{0}(A)$.
There is a canonical map $\BGL(A) \to \Omega^\infty \tau_{\geq 1} K(A)$ which exhibits the target as the plus construction $\BGL(A)^+$. In particular, this map is a homology equivalence, and hence the map of spectra $\Sigma^\infty \BGL(A) \to \Sigma^\infty\Omega^\infty \tau_{\geq1} K(A)$ is an equivalence.

For an arbitrary $\mathcal{C} \in \Cat_\infty^{\perf}$, 
we will also need the explicit description of the $K$-theory space
$\Omega^\infty  K(\mathcal{C})$, which arises via the Waldhausen
$S_\bullet$-construction, cf.~\cite[Sec.~7.1]{BGT}. 
The $S_\bullet$-construction produces a simplicial object $S_\bullet \mathcal{C}
\in \fun(\Delta^{\op}, \Cat_\infty^{\perf})$ such that there is a natural equivalence
of spaces
\begin{equation}
\Omega^\infty K(\mathcal{C})  \simeq \Omega |
S_\bullet(\mathcal{C})^{\simeq}|, \label{geomrealexpr} 
\end{equation}
where $(-)^{\simeq}$ denotes the underlying space of an $\infty$-category;
moreover, we have for each $n \geq 0$ a natural equivalence $S_n(\mathcal{C})
\simeq \mathrm{Fun}( \Delta^{n-1},
\mathcal{C})$.  
Note that both sides have the canonical structure of
$\mathbb{E}_\infty$-spaces since $\Cat_\infty^{\perf}$
is semiadditive, i.e.\ it has finite biproducts as in \cite[Definition 2.1]{GGN15}; In fact, the equivalence \ref{geomrealexpr} is one of $\E_\infty$-spaces, therefore, we can deloop both sides to obtain 
\begin{equation} \label{deloopedK}
\Omega^\infty( \tau_{\geq 0} K(\mathcal{C})[1]) \simeq
|S_\bullet(\mathcal{C})^{\simeq} |.
\end{equation}

\section{Proof of the Purity Theorem}

In this section, we will prove \Cref{thm-A}, which we restate here as \Cref{purity1}, in several steps, each of which will give a
special case of the result. We then combine this with Theorem~\ref{thm-cmnn} to obtain the Purity Theorem.
Our first step, which we treat in the following subsection, will
involve only highly connective maps of connective ring spectra. 

\subsection{The highly connective case}
	\label{sec:proof-of-main-thm}
	
\begin{prop}\label{prop:E(n)-local-K-theory}
Let $n \geq 1$. There exists $N \geq 1$ such that the following holds:  let $A \to B$ be an $N$-connective $L_n^{p,f}$-equivalence between connective ring spectra. Then the induced map $K(A) \to K(B)$ is again an $L_n^{p,f}$-equivalence.
\end{prop}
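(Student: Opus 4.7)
The plan is to reduce the claim to a statement at the level of $\Sigma^\infty \mathrm{BGL}$ and then invoke \Cref{lem:vnTnvanishing}(ii). Because $A \to B$ is $N$-connective (for $N \geq 1$), it is a $\pi_0$-isomorphism, so $K_0(A) \to K_0(B)$ is an isomorphism. Using that $H\mathbb{Z}$ is $T(i)$-acyclic for $i \geq 1$ (and that $HK_0(A) \to HK_0(B)$ is an equivalence), the question for $K(A) \to K(B)$ reduces to the corresponding one for $\tau_{\geq 1}K(A) \to \tau_{\geq 1}K(B)$. These are connective spectra, so by \Cref{T(i)-detects} together with its subsequent remark (which handles $T(0)$ on connective spectra), it suffices to show that $\Sigma^\infty \Omega^\infty \tau_{\geq 1} K(A) \to \Sigma^\infty \Omega^\infty \tau_{\geq 1} K(B)$ is an $L_n^{p,f}$-equivalence. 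By the plus-construction identification recalled in \Cref{sec:loc-and-K}, this map is precisely $\Sigma^\infty \phi$ for $\phi \colon \mathrm{BGL}(A) \to \mathrm{BGL}(B)$.

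To verify the hypotheses of \Cref{lem:vnTnvanishing}(ii) for $\phi$, I would analyze its fiber. Let $G := \mathrm{fib}(A \to B)$; by assumption this is an $N$-connective, $L_n^{p,f}$-acyclic spectrum. Since $\mathrm{GL}_k$ sits inside $\Omega^\infty M_k$ as the union of invertible components and $A \to B$ is a $\pi_0$-iso (so invertible components match in the source and target), the pointed fiber of $\mathrm{GL}_k(A) \to \mathrm{GL}_k(B)$ at the identity is $\Omega^\infty M_k(G)$. Passing to the filtered colimit in $k$ (which commutes with $\Omega^\infty$ for bounded-below spectra of uniform connectivity), the fiber of $\mathrm{GL}(A) \to \mathrm{GL}(B)$ is $\Omega^\infty M(G)$, where $M(G) := \mathrm{colim}_k M_k(G) \simeq \bigoplus_{\mathbb{N}^2} G$. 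Since $L_n^{p,f}$ is smashing by \Cref{lem:Lnpf-is-finite}, $M(G)$ remains $L_n^{p,f}$-acyclic. Delooping, the fiber $F$ of $\phi$ is $\Omega^\infty(M(G)[1])$, which is $(N+1)$-connective; by \Cref{lem:basicproperties}(i) its $v_i$-periodic homotopy groups agree with those of $M(G)[1]$ and hence vanish for $1 \leq i \leq n$, while for $i = 0$ the $p$-power torsion nature of $\pi_*G$ forces the analogous vanishing (and $F$ is simply connected, hence simple, so the $v_0$-periodic notion is defined).

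Now choose $N \geq m-1$, where $m$ is the integer supplied by \Cref{lem:vnTnvanishing}. Then $\phi$ is $m$-connective and its fiber satisfies the vanishing hypothesis, so \Cref{lem:vnTnvanishing}(ii) yields that $\Sigma^\infty \phi$ is an $L_n^{p,f}$-equivalence; combined with the reductions in the first paragraph, $K(A) \to K(B)$ is an $L_n^{p,f}$-equivalence. The main obstacle in executing this plan is the fiber identification $\mathrm{fib}(\mathrm{GL}(A) \to \mathrm{GL}(B)) \simeq \Omega^\infty M(G)$, which requires careful bookkeeping with invertible components (to pass between $\mathrm{GL}_k$ and $\Omega^\infty M_k$ on the fiber) and a justification that $\Omega^\infty$ commutes with the relevant filtered colimit in $k$. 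Once the fiber is pinned down, the smashing property of $L_n^{p,f}$ and \Cref{lem:basicproperties}(i) turn the verification of the $v_i$-periodic vanishing into a bookkeeping exercise, and \Cref{lem:vnTnvanishing}(ii) closes out the argument.
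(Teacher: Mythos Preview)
Your proof is correct and follows essentially the same route as the paper: reduce to $\Sigma^\infty\mathrm{BGL}$ via the plus-construction and \Cref{T(i)-detects}, then verify the hypotheses of \Cref{lem:vnTnvanishing}(ii) for $\phi\colon \mathrm{BGL}(A)\to\mathrm{BGL}(B)$. The only cosmetic differences are that the paper handles $T(0)$ by a direct appeal to Waldhausen rather than via the remark after \Cref{T(i)-detects}, and checks that $\mathrm{GL}(A)\to\mathrm{GL}(B)$ is an $(m-1)$-connective $v_i$-periodic equivalence directly (using $\tau_{\geq 1}\mathrm{GL}\simeq\tau_{\geq 1}\mathrm{M}$ and \Cref{lem:basicproperties}(i),(ii)) rather than by identifying the fiber; your fiber analysis reaches the same conclusion.
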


\begin{proof}
We take $N = m-1$, where $m$ is as in 
\Cref{lem:vnTnvanishing}. 
By Waldhausen's result (see \cite[Propositions  1.1, 2.2]{Waldhausen} or \cite[Lemma 2.4]{LT}) the map $K(A) \to K(B)$ is a  $T(0)$-equivalence. It hence remains to prove that the map $K(A) \to K(B)$ is a $T(i)$-local equivalence for $1 \leq i \leq n$. By \cref{lem:basicproperties}(ii), it suffices to show that $\tau_{\geq 1}K(A) \to \tau_{\geq 1} K(B)$ is a $T(i)$-local equivalence for $1 \leq i \leq n$. 

We consider the following commutative diagram, where we use the plus-construction description of algebraic $K$-theory for connective ring spectra as recalled in \cref{sec:loc-and-K}.
\[\begin{tikzcd}
	\Sigma^\infty \BGL(A) \ar[r] \ar[d,"\simeq"'] & \Sigma^\infty \BGL(B) \ar[d,"\simeq"] \\
	\Sigma^\infty\Omega^\infty \tau_{\geq 1}K(A) \ar[r] & \Sigma^\infty\Omega^\infty \tau_{\geq 1} K(B)
\end{tikzcd}\]
By \cref{T(i)-detects}, it suffices to show that the lower horizontal map is a $T(i)$-local equivalence for $1 \leq i \leq n$. Since the vertical maps in the above diagram are equivalences, this is the case if the top horizontal map is a $T(i)$-local equivalence. This and thus the proposition will follow from \cref{lem:vnTnvanishing}(ii) once we have shown the following: The map $\BGL(A) \to \BGL(B)$ is $m$-connective and induces an isomorphism on $v_{i}$-periodic homotopy groups for $i\leq n$.

To show this claim, we observe that the classifying space construction $\B$
increases the connectivity of a map by $1$ and preserves $v_{i}$-periodic
equivalences. Thus it suffices to see that $\GL(A) \to \GL(B)$ is an $(m-1)$-connective $v_{i}$-periodic equivalence for $1\leq i \leq n$.
Observe that by definition the map $A\to B$ induces an isomorphism between $\pi_{0}(\GL(A)) = \GL(\pi_{0}(A))$ and $\GL(\pi_{0}(B)) = \pi_{0}(\GL(B))$. Furthermore,  
 $\tau_{\geq 1}\GL(A) \simeq \tau_{\geq1}\M(A)$, where $\M(A)$ is the space of matrices $\colim_{r} \Omega^{\infty}\End(A^{r}) \simeq \colim_{r} \Omega^{\infty}A^{r\times r}$, and similarly for $B$. 
As $A \to B$ is $(m-1)$-connective and as $m\geq 2$, we thus see that $\GL(A)
\to \GL(B)$ is $(m-1)$-connective. Further, as $A \to B$ is an $L_{n}^{p,f}$-equivalence, it induces isomorphisms on $v_{i}$-periodic homotopy groups for $i\leq n$. By Lemma~\ref{lem:basicproperties}(i) also $\M(A) \to \M(B)$ is a $v_{i}$-periodic equivalence, and, finally, by Lemma~\ref{lem:basicproperties}(ii) also $\GL(A) \to \GL(B)$ is a $v_{i}$-periodic equivalence for $i\leq n$, as desired.
\end{proof}

\subsection{A truncating property}
Our next goal is to prove a version of \Cref{prop:E(n)-local-K-theory} with weaker connectivity
hypotheses; in fact, we will only need a special case
(\Cref{TnKistruncatingT0Tnacyclic}) below, formulated using the language of
truncating invariants. 
To do this, we will need some further preliminaries.

\begin{lemma}	
	\label{lem:connectivity-of-multiplication}
Let
\[
\begin{tikzcd}
 A \ar[d]\ar[r] & B \ar[d] \\ 
 A' \ar[r] & B' 
\end{tikzcd}
\]
be a pullback square of ring spectra in which $A$ is connective. If $A \to A'$ is $n$-connective and $A \to B$ is $m$-connective, then the induced map $A' \wtimes{A}{B'} B \to B'$ is $(m+n+2)$-connective.
\end{lemma}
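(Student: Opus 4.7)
The plan is to invoke the $\odot$-construction from Land--Tamme's work on excision and use their identification of the fibre of the comparison map $A' \wtimes{A}{B'} B \to B'$. Once the fibre is described as a relative tensor product of the fibres of the ``input'' maps, the result will follow from a routine connectivity estimate.

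Write $I := \fib(A \to A')$ and $J := \fib(A \to B)$, so that by assumption $I$ is $n$-connective and $J$ is $m$-connective as $A$-bimodules. The key input I would quote is the Land--Tamme computation giving an equivalence of spectra of the form $\fib(A' \wtimes{A}{B'} B \to B') \simeq \Sigma I \otimes_A \Sigma J$ (or an equivalent expression carrying total suspension degree $2$, e.g.\ $\Sigma^2(I \otimes_A J)$). This is where the pullback hypothesis enters: for a general square the fibre would have a more complicated description, but for a cartesian square the $\odot$-construction is built precisely so that the comparison map has this clean form.

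With the fibre formula in hand, the estimate is immediate. Since $A$ is connective, the relative tensor product over $A$ preserves connectivity: if $X$ is $k$-connective and $Y$ is $\ell$-connective as $A$-modules (with $k,\ell \geq 0$), then $X \otimes_A Y$ is $(k+\ell)$-connective, as one sees from the bar resolution together with the connectivity of $A$. Applied to $\Sigma I$ (which is $(n+1)$-connective) and $\Sigma J$ (which is $(m+1)$-connective), this yields that $\Sigma I \otimes_A \Sigma J$ is $(n+m+2)$-connective. Hence the fibre of $A' \wtimes{A}{B'} B \to B'$ is $(n+m+2)$-connective, which is precisely the claim.

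The main obstacle is securing the precise form of the fibre identification from the Land--Tamme machinery; once that formula is in place, the rest is entirely formal, using only standard facts about connective modules over a connective $\mathbb{E}_1$-ring. I would expect the authors either to cite this directly from Land--Tamme or, if the $\odot$-construction has already been recalled in earlier sections of the paper, to pull out the fibre computation as a named lemma and reference it here.
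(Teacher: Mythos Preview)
Your approach matches the paper's. The formula actually cited from \cite[Remark~1.16]{LT} is slightly different, namely $\fib(A' \wtimes{A}{B'} B \to B') \simeq \Sigma \fib(I \otimes_A B \xrightarrow{\mu} I)$; the paper then observes that $\mu$ has a section $\sigma\colon I \to I \otimes_A B$ (induced by $A \to B$) with $\fib(\sigma) \simeq I \otimes_A J$, and reads off directly from the retraction identity $\mu\sigma\simeq\id$ that $\mu$ is $(m+n+1)$-connective. Since the splitting in fact gives $\fib(\mu) \simeq \cof(\sigma) \simeq \Sigma(I \otimes_A J)$, this is your formula $\Sigma^2(I \otimes_A J)$ in disguise, so the two arguments agree up to packaging.
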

Here $A' \wtimes{A}{B'} B$  denotes the ring spectrum associated to the displayed pullback square by \cite[Main Theorem]{LT}.
\begin{proof}
Denote  by $I$ the common fibre of the vertical maps, by $J$ that of the horizontal maps. From \cite[Remark 1.16]{LT} we have an equivalence
\[
\fib( A' \wtimes{A}{B'} B \to B' ) \simeq \Sigma \fib( I \otimes_{A} B \xrightarrow{\mu} I )
\]
where the map $\mu$ is induced by the right $B$-module structure on $I$. Now $\mu$ has a section $\sigma\colon I \to I \otimes_{A} B$ induced by the map $A \to B$. The fibre of $\sigma$ identifies with $I \otimes_{A} J$, which is $(n+m)$-connective as $A$ is connective. In other words, $\sigma$ is an isomorphism in degrees $\leq m+n-1$ and surjective in degree $m+n$. Since $\mu\circ\sigma\simeq \id_{I}$ and so $\mu$ is surjective in every degree, it then follows that $\mu$ is an isomorphism in degrees $\leq m+n$ and surjective in degree $m+n+1$, i.e.~$\mu$ is $(m+n+1)$-connective. By the above equivalence, $A' \wtimes{A}{B'} B \to B'$ is then $(m+n+2)$-connective, as desired.
\end{proof}

Let $M$ be a spectrum. 
We say that a localizing invariant $E$ is \emph{truncating on $M$-acyclic ring
spectra} if for every $M$-acyclic connective ring spectrum $R$, we have
$E(R) \xrightarrow{\sim} E(\pi_0 R)$. Note that if a ring spectrum $R$ is $M$-acyclic, then also $\tau_{\leq k}R$ is $M$-acyclic for all $k$ as follows by consideration of the ring map $L_MR \to L_M{\tau_{\leq k} R}$. The following lemma also appears in similar form in \cite[Lemma~3.11]{MTR}.

\begin{lemma} 
\label{ktruncatingimpliestruncating}
Let $E$ be a localizing invariant. 
Suppose that there exists $k \geq 0$ such that the map 
$E(R) \xrightarrow{\sim} E(\tau_{\leq k} R)$ is an equivalence for any $M$-acyclic connective ring spectrum $R$. Then $E$ is
truncating on $M$-acyclic ring spectra. 
\end{lemma}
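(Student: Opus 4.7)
My plan is to prove by downward induction on $l \in \{k, k-1, \dots, 0\}$ that $E(R) \simeq E(\tau_{\leq l} R)$ for every $M$-acyclic connective ring spectrum $R$. The base case $l = k$ is the given hypothesis, and the case $l = 0$ is exactly the desired conclusion.

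For the inductive step, assuming the statement at level $l \geq 1$, it suffices to prove $E(S) \simeq E(\tau_{\leq l-1} S)$ for $S := \tau_{\leq l} R$, which is an $l$-truncated, $M$-acyclic, connective ring spectrum. I will use the standard Postnikov classifier pullback of ring spectra
\[
\begin{tikzcd}
S \ar[r] \ar[d] & \tau_{\leq l-1} S \ar[d, "0"] \\
\tau_{\leq l-1} S \ar[r, "\kappa"] & \tau_{\leq l-1} S \oplus \Sigma^{l+1} \pi_l S
\end{tikzcd}
\]
in which the two maps into the lower-right corner are the zero section and the Postnikov $k$-invariant $\kappa$ classifying the square-zero extension $S \to \tau_{\leq l-1} S$ by $\Sigma^l \pi_l S$. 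All four corners are $M$-acyclic connective: truncations of the $M$-acyclic ring $R$ are $M$-acyclic (since the ring map $L_M R = 0 \to L_M \tau_{\leq j} R$ forces the target to vanish), and $H\pi_l S$ is $M$-acyclic because it is a desuspension of the fibre of the map $\tau_{\leq l} S \to \tau_{\leq l-1} S$ between $M$-acyclic ring spectra.

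Invoking the pullback theorem for localizing invariants from \cite{LT} with the lower-right corner replaced by the Land--Tamme correction ring $B := \tau_{\leq l-1} S \wtimes{S}{\tau_{\leq l-1} S \oplus \Sigma^{l+1} \pi_l S} \tau_{\leq l-1} S$ produces a pullback
\[
E(S) \simeq E(\tau_{\leq l-1} S) \times_{E(B)} E(\tau_{\leq l-1} S).
\]
By \cref{lem:connectivity-of-multiplication} the map $B \to \tau_{\leq l-1} S \oplus \Sigma^{l+1} \pi_l S$ is $(2l+2)$-connective, so $\tau_{\leq l} B = \tau_{\leq l} (\tau_{\leq l-1} S \oplus \Sigma^{l+1} \pi_l S) = \tau_{\leq l-1} S$. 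Moreover $B$ is itself $M$-acyclic, because it admits a ring map to the $M$-acyclic ring $\tau_{\leq l-1} S \oplus \Sigma^{l+1}\pi_l S$. Applying the inductive hypothesis at level $l$ then gives $E(B) \simeq E(\tau_{\leq l-1} S)$, and a direct check shows that both ring maps $\tau_{\leq l-1} S \rightrightarrows B$ become the identity on $\tau_{\leq l-1} S$ after postcomposition with the truncation $B \to \tau_{\leq l} B = \tau_{\leq l-1} S$: the $\Sigma^{l+1}\pi_l S$ summand is killed by $\tau_{\leq l}$, regardless of whether one approaches via the zero section or via $\kappa$. The pullback accordingly collapses to $E(\tau_{\leq l-1} S)$, completing the inductive step.

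The main obstacle I foresee is correctly packaging the Postnikov classifier pullback within the Land--Tamme framework of \cite{LT}, and, most importantly, establishing the $M$-acyclicity of the correction ring $B$; handled above, this follows formally from the existence of a ring map from $B$ into an $M$-acyclic target. The remaining step -- tracking connectivity through $B \to B'$ and collapsing the pullback -- is then a bookkeeping argument combining \cref{lem:connectivity-of-multiplication} with the inductive hypothesis at level $l$.
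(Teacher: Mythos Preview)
Your approach is essentially the paper's: downward induction via the Postnikov square-zero pullback, the main theorem of \cite{LT}, and \cref{lem:connectivity-of-multiplication} to control the correction ring. The paper phrases the induction slightly differently and uses a variant of the classifying square, but the mechanism is identical.

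There is one genuine slip. You argue that the correction ring $B$ is $M$-acyclic ``because it admits a ring map to the $M$-acyclic ring $\tau_{\leq l-1} S \oplus \Sigma^{l+1}\pi_l S$.'' This implication runs the wrong way: a ring map $B \to C$ with $C$ $M$-acyclic says nothing about $M\otimes B$ (consider $\mathbb{S} \to 0$). The fix is immediate and already implicit in your own write-up: by the \cite{LT} construction, $B$ receives ring maps \emph{from} $\tau_{\leq l-1} S$ --- indeed the two maps $\tau_{\leq l-1} S \rightrightarrows B$ you invoke a few lines later --- so $B$ is a module over an $M$-acyclic ring and hence $M$-acyclic. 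With that correction the argument goes through.
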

\begin{proof} 
It suffices to show that if $E$ and $k>0$ are as in the statement of the lemma,
then we have in fact $E( \tau_{\leq k} R) \xrightarrow{\sim} E(\tau_{\leq k-1}
R)$ for all $M$-acyclic connective ring spectra $R$; the result then follows by induction on $k$. 

To this end, recall that 
$\tau_{\leq k} R \to \tau_{\leq k-1} R$ is a square-zero extension, so 
there is a pullback square of ring spectra (cf.~\cite[7.4.1.29]{HA}),
\[ 
\begin{tikzcd}
\tau_{\leq k} R \ar[d]  \ar[r] &  H \pi_0 R \ar[d]  \\
\tau_{\leq k-1} R \ar[r] &  H \pi_0 R \oplus (H \pi_k R )[k+1].
\end{tikzcd}
\]
All ring spectra in this square are connective and $M$-acyclic. 
It follows 
from 
\cite[Main Theorem]{LT}
that we have a pullback square of spectra
\[ 
\begin{tikzcd}
E( \tau_{\leq k} R) \ar[d]  \ar[r] &  E( H \pi_0 R) \ar[d]  \\
E( \tau_{\leq k-1} R) \ar[r] &  E\left( \tau_{\leq k-1} R \wtimes{\tau_{\leq k} R }{H
\pi_0 R \oplus (H \pi_k R)[k+1]} H \pi_0 R \right).
\end{tikzcd}
\]
It thus suffices to show that the right vertical map is an equivalence. 
But this follows because, by \Cref{lem:connectivity-of-multiplication}, the map 
of connective, $M$-acyclic ring spectra
\[ H \pi_0 R \to \tau_{\leq k-1} R \wtimes{\tau_{\leq k} R }{H
\pi_0 R \oplus (H \pi_k R)[k+1]} H \pi_0 R  \]
induces an equivalence on $\tau_{\leq k}$ and hence on $E(-)$. 
\end{proof}

\begin{prop}\label{TnKistruncatingT0Tnacyclic}
For $n \geq 1$, $L_{T(n)} K(-)$ is truncating on $L_{n}^{p,f}$-acyclic ring spectra. 
\end{prop}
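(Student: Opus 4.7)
The plan is to deduce this from \Cref{ktruncatingimpliestruncating} and \Cref{prop:E(n)-local-K-theory}. Note first that $E = L_{T(n)} K(-)$ is a localizing invariant, being the composition of the localizing invariant $K$ with the exact symmetric monoidal Bousfield localization $L_{T(n)}$. Taking $M = T(0)\oplus T(1) \oplus \dots \oplus T(n)$ in \Cref{ktruncatingimpliestruncating}, it suffices to exhibit a single integer $k \geq 0$ such that for every $L_n^{p,f}$-acyclic connective ring spectrum $R$, the canonical map
\[ L_{T(n)} K(R) \lto L_{T(n)} K(\tau_{\leq k} R) \]
is an equivalence.

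Let $N$ be the integer provided by \Cref{prop:E(n)-local-K-theory}, and set $k = N-1$. I claim that for every $L_n^{p,f}$-acyclic connective ring spectrum $R$, the truncation map $R \to \tau_{\leq k} R$ is an $N$-connective $L_n^{p,f}$-equivalence between connective ring spectra. The connectivity is immediate, since the fibre is $\tau_{\geq k+1} R$, which is $(k+1)=N$-connective. For the $L_n^{p,f}$-equivalence part, recall from the discussion preceding \Cref{ktruncatingimpliestruncating} that $M$-acyclicity is preserved by truncation, so $\tau_{\leq k} R$ is again $L_n^{p,f}$-acyclic; consequently the fibre $\tau_{\geq k+1} R$ is $L_n^{p,f}$-acyclic as well, as needed.

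Applying \Cref{prop:E(n)-local-K-theory} to $R \to \tau_{\leq k} R$ shows that the induced map $K(R) \to K(\tau_{\leq k} R)$ is an $L_n^{p,f}$-equivalence, hence in particular a $T(n)$-equivalence, since the Bousfield class $\langle T(n) \rangle$ is contained in $\langle T(0)\oplus\dots\oplus T(n)\rangle$. Thus $L_{T(n)} K(R) \to L_{T(n)} K(\tau_{\leq k} R)$ is an equivalence, and \Cref{ktruncatingimpliestruncating} gives the desired truncating property.

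Essentially all the technical content has already been packaged into the cited preliminaries, so I do not anticipate a significant obstacle in the argument itself; the only point that needs a moment of care is the preservation of $L_n^{p,f}$-acyclicity under truncation, which is explicitly flagged in the paper.
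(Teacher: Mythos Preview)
Your argument is correct and is exactly the approach the paper takes: combine \Cref{prop:E(n)-local-K-theory} with \Cref{ktruncatingimpliestruncating}, using $k=N-1$. One small wording quibble: with the paper's convention that the Bousfield class of $E$ is the class of $E$-acyclic spectra, the containment goes the other way (the $T(0)\oplus\dots\oplus T(n)$-acyclics are contained in the $T(n)$-acyclics), but this is precisely what you need and use, so the mathematics is fine.
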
 
\begin{proof} 
This follows from \Cref{prop:E(n)-local-K-theory} and 
\Cref{ktruncatingimpliestruncating}. 
\end{proof}

\begin{cor}[Cf.~also \cite{BCM}] 
For any $n \geq 1$, we have $L_{T(i)} K(\mathbb{Z}/p^n) = 0$ for $i \geq 1$. 
\label{K1KZpn}
\end{cor}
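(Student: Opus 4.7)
The plan is to induct on $n$, using the square-zero structure of $\Z/p^{n+1}\to\Z/p^n$ to reduce to the base case $n=1$. For $n=1$, Quillen's classical computation shows that $\pi_{*}K(\F_p)$ is $\Z$ in degree $0$ and prime-to-$p$ cyclic in odd positive degrees, so the $p$-completion $K(\F_p)^\wedge_p$ is $H\Z_p$. Since any bounded-above spectrum has vanishing $v_i$-periodic homotopy (as $v_i$ has positive degree), \cref{lem:basicproperties}(viii) gives that $H\Z_p$ is $T(i)$-acyclic; combining with \cref{lem:basicproperties}(vi), $L_{T(i)}K(\F_p)=0$.

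For the inductive step, the surjection $\Z/p^{n+1}\to\Z/p^n$ is a square-zero extension of discrete rings (as $(p^n)^2\equiv 0\pmod{p^{n+1}}$ for $n\geq 1$) with kernel the $\Z/p^n$-module $\F_p$. By the classification of square-zero extensions in \cite[\S 7.4.1]{HA}, this fits into a pullback square of connective $\E_1$-ring spectra
\[
\begin{tikzcd}
\Z/p^{n+1} \ar[d] \ar[r] & \Z/p^n \ar[d] \\
\Z/p^n \ar[r] & \Z/p^n \oplus \F_p[1]
\end{tikzcd}
\]
in which the bottom map is the zero section of the trivial square-zero extension. All four corners are bounded-above Eilenberg--MacLane spectra on which some power of $p$ is zero; by \cref{lem:basicproperties}, each is $T(j)$-acyclic for every $j\geq 0$ and thus $L_i^{p,f}$-acyclic.

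Next I would apply \cite[Main Theorem]{LT}: this yields a pullback square of $L_{T(i)}K$-spectra once the comparison map $\Z/p^n \wtimes{\Z/p^{n+1}}{\Z/p^n\oplus\F_p[1]} \Z/p^n \to \Z/p^n\oplus\F_p[1]$ becomes an equivalence after $L_{T(i)}K$. By \cref{lem:connectivity-of-multiplication} this map is $2$-connective, since both factor maps $\Z/p^{n+1}\rightrightarrows\Z/p^n$ are $0$-connective; in particular it is a $\pi_0$-isomorphism. The source is again $L_i^{p,f}$-acyclic (the fibre computed in the proof of \cref{lem:connectivity-of-multiplication} is built from $L_i^{p,f}$-acyclic pieces), so \cref{TnKistruncatingT0Tnacyclic} applies and makes the comparison an equivalence. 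Finally, the zero section $\Z/p^n\to\Z/p^n\oplus\F_p[1]$ is another $\pi_0$-isomorphism between $L_i^{p,f}$-acyclic rings, hence also becomes an equivalence under $L_{T(i)}K$. Thus the pullback of $L_{T(i)}K$-spectra has three corners equal to $L_{T(i)}K(\Z/p^n)=0$ by the induction hypothesis, and so $L_{T(i)}K(\Z/p^{n+1})=0$.

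The main obstacle is converting the square-zero pullback of ring spectra into a pullback after $L_{T(i)}K$; this hinges on combining the Land--Tamme theorem with the connectivity estimate of \cref{lem:connectivity-of-multiplication} and the restricted truncating property of \cref{TnKistruncatingT0Tnacyclic}, plus the short verification that the $\odot$-construction preserves $L_i^{p,f}$-acyclicity.
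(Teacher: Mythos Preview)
Your proof is correct, but it takes a longer route than the paper. The paper's argument is a single sentence: since $L_{T(i)}K(-)$ is truncating on $L_i^{p,f}$-acyclic ring spectra (\cref{TnKistruncatingT0Tnacyclic}), it is nilinvariant on such rings by \cite[Corollary~3.5]{LT}; applying this to the nilpotent surjection $\Z/p^n \to \F_p$ and invoking Quillen's computation $K(\F_p)^\swedge_p \simeq H\Z_p$ gives the result. Your induction on $n$ via square-zero pullbacks and the Land--Tamme Main Theorem is precisely an unwinding, in this particular case, of the proof that truncating implies nilinvariant. So the key inputs are the same (\cref{TnKistruncatingT0Tnacyclic} and Quillen), but the paper packages the passage from $\Z/p^n$ to $\F_p$ into one citation rather than an explicit induction.

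One small point: your justification that the $\odot$-ring is $L_i^{p,f}$-acyclic via ``built from $L_i^{p,f}$-acyclic pieces'' is correct but roundabout. A cleaner observation is that $A' \wtimes{A}{B'} B$ receives a ring map from $A' = \Z/p^n$, hence is a $\Z/p^n$-algebra; since $\Z/p^n$ is $T(j)$-acyclic for all $j \geq 0$, every $\Z/p^n$-module (in particular every $\Z/p^n$-algebra) is $T(j)$-acyclic, because any module $M$ over a ring $R$ is a retract of $R \otimes M$.
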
 
\begin{proof} 
This follows 
from \Cref{TnKistruncatingT0Tnacyclic} since truncating invariants are
nilinvariant, \cite[Corollary~3.5]{LT}, and Quillen's computation that $K(\F_{p})^{\swedge}_{p}=H\Z_{p}$.
\end{proof} 

\subsection{The general case}
Now we extend the results to nonconnective ring spectra, and then
complete the proofs of \Cref{thm-A} and the Purity Theorem. Our strategy of proof is to reduce the nonconnective case to the
connective case using the $S_\bullet$-construction. In this we will work with a not-necessarily stable, but additive $\infty$-category $\cA$, about which we make two remarks: 
\begin{itemize}
	\item We can view $\cA$ as a symmetric monoidal $\infty$-category under $\oplus$ and denote by $K^{\mathrm{add}}(\cA)$ its group-completion $K$-theory, cf.~\cite{GGN15} for a modern account. If $R$ is a connective ring spectrum and $\cA$ is the $\infty$-category $\mathrm{Proj}^\omega(R)$ of finitely generated projective $R$-modules, there is an equivalence $K^{\mathrm{add}}(\cA) \simeq \tau_{\geq 0}K(R)$.
	\item Given two objects $X$ and $Y$ in an additive $\infty$-category $\cA$, their mapping space has the canonical structure of a grouplike $\E_{\infty}$-space, giving rise to a connective spectrum $\hom_{\cA}(X,Y)$. If $\cA$ is stable, this is the connective cover of the homomorphism spectrum $\mathrm{Hom}_{\cA}(X,Y)$. We remark that $\mathrm{Hom}_{\cA}(X,X)$ is $L_n^{p,f}$-acyclic if and only if $\mathrm{hom}_{\cA}(X,X)$ is $L_n^{p,f}$-acyclic: for $T(i)$-acyclicity with $i>0$, this is \cref{T(i)-detects} and each of $\mathrm{hom}_{\cA}(X,X)$ and $\mathrm{Hom}_{\cA}(X,X)$ is $T(0)$-acyclic if and only if $[\id_X]$ in the common $\pi_0$ is $p$-power torsion.
\end{itemize}
In the following, we will assume that all $\infty$-categories 
of which we consider the $K$-theory are idempotent-complete.
\begin{prop}  
Let $\mathcal{C}$ be an additive $\infty$-category. Suppose 
for each object $X \in \mathcal{C}$, the ring spectrum $\hom_{\mathcal{C}}(X, X)$ is
annihilated by $L_n^{p,f}$. Then 
\begin{enumerate}
	\item $L_{T(i)} K^{\mathrm{add}}(\mathcal{C}) = 0$ for $ 1 \leq i \leq n$, and
	\item $L_{T(i)} K(\mathcal{C}) = 0$ for $ 1 \leq i \leq n$ if $\mathcal{C}$ is stable. 
\end{enumerate}
\label{vanishTilocal}
\end{prop}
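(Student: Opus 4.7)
For part (i), the plan is to show directly that $(\mathcal{C}^\simeq)^{\mathrm{gp}} = \Omega^\infty K^{\mathrm{add}}(\mathcal{C})$ has vanishing $v_i$-periodic homotopy for each $1 \leq i \leq n$; combined with Lemma~\ref{lem:basicproperties}(viii), this gives $L_{T(i)} K^{\mathrm{add}}(\mathcal{C}) = 0$ because $K^{\mathrm{add}}(\mathcal{C})$ is connective. Writing $\mathcal{C}^\simeq = \coprod_{[X]} B\Aut_{\mathcal{C}}(X)$, the automorphism space $\Aut_\mathcal{C}(X)$ is a union of connected components of the infinite loop space $\Omega^\infty \hom_\mathcal{C}(X, X)$, so the hypothesis and Lemma~\ref{lem:basicproperties}(viii) force both $\Aut_\mathcal{C}(X)$ and $B\Aut_\mathcal{C}(X)$ to have vanishing $v_i$-periodic homotopy for $1 \leq i \leq n$. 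The McDuff--Segal group completion theorem identifies each path component of $(\mathcal{C}^\simeq)^{\mathrm{gp}}$ as a filtered homotopy colimit of pointed spaces of the form $B\Aut_\mathcal{C}(Y)$, with transitions induced by the monoid operation $Y \mapsto Y \oplus Z$; since $V_i$ is a finite complex and hence compact, $v_i$-periodic homotopy commutes with such colimits and the desired vanishing follows.

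For part (ii), I will deduce the claim from (i) using the Waldhausen $S_\bullet$-construction. The key step is the identification $\tau_{\geq 0} K(\mathcal{C})[1] \simeq |K^{\mathrm{add}}(S_\bullet \mathcal{C})|$ of connective spectra. By~\eqref{deloopedK}, the $\Omega^\infty$ of the left-hand side is $|S_\bullet(\mathcal{C})^\simeq|$, which is connected---hence a grouplike $\E_\infty$-space---and therefore agrees with $|(S_\bullet \mathcal{C}^\simeq)^{\mathrm{gp}}| = |\Omega^\infty K^{\mathrm{add}}(S_\bullet \mathcal{C})|$ since group completion and the equivalence between grouplike $\E_\infty$-spaces and connective spectra both commute with simplicial realization. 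Next, for each $k$ the stable $\infty$-category $S_k \mathcal{C} = \Fun(\Delta^{k-1}, \mathcal{C})$ satisfies the proposition's hypothesis: the endomorphism spectrum $\Hom_{S_k\mathcal{C}}(F, F)$ is a finite limit of the spectra $\Hom_\mathcal{C}(F(a), F(b))$, and each of the latter is a module over the $L_n^{p,f}$-acyclic ring spectrum $\Hom_\mathcal{C}(F(b), F(b))$, hence itself $L_n^{p,f}$-acyclic (since $L_n^{p,f}$ is smashing); $L_n^{p,f}$-acyclic spectra form a thick subcategory and are thus closed under finite limits. Applying part (i) to each $S_k \mathcal{C}$ yields that $K^{\mathrm{add}}(S_k \mathcal{C})$ is $T(i)$-acyclic, and since $L_{T(i)}$ commutes with colimits, so is $|K^{\mathrm{add}}(S_\bullet \mathcal{C})| \simeq \tau_{\geq 0} K(\mathcal{C})[1]$. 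Finally, $\tau_{<0} K(\mathcal{C})$ is bounded above and hence $T(i)$-acyclic for $i \geq 1$ (by Postnikov induction from Eilenberg--MacLane spectra, which are $T(i)$-acyclic as modules over the $T(i)$-acyclic $H\mathbb{Z}$), so $K(\mathcal{C})$ itself is $T(i)$-acyclic.

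The main technical hurdle is to formalize the description in part (i) of the group completion of the $\E_\infty$-space $\mathcal{C}^\simeq$ as a filtered homotopy colimit of classifying spaces $B\Aut_{\mathcal{C}}(Y)$, for which the McDuff--Segal group completion theorem or its $\infty$-categorical analogues apply. The other formal input---the compatibility of simplicial realization, group completion, and $\Omega^\infty$ via the equivalence between grouplike $\E_\infty$-spaces and connective spectra---is standard but requires careful bookkeeping to justify the identification $\tau_{\geq 0} K(\mathcal{C})[1] \simeq |K^{\mathrm{add}}(S_\bullet \mathcal{C})|$ used in part (ii).
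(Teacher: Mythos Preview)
Your argument for part~(ii) is correct and matches the paper's approach exactly: both reduce to part~(i) via the $S_\bullet$-construction and the identification $\tau_{\geq 0}K(\mathcal{C})[1] \simeq |K^{\mathrm{add}}(S_\bullet\mathcal{C})|$, together with the observation that the $L_n^{p,f}$-acyclicity hypothesis passes to $\Fun(\Delta^{k-1},\mathcal{C})$.

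Your argument for part~(i), however, has a genuine gap. The McDuff--Segal group completion theorem does \emph{not} identify the components of $(\mathcal{C}^\simeq)^{\mathrm{gp}}$ with filtered colimits of the spaces $B\Aut_\mathcal{C}(Y)$; it only identifies their \emph{homology}. The telescope $\colim_Y B\Aut_\mathcal{C}(Y)$ differs from the group completion by a plus construction, and the plus construction does not preserve $v_i$-periodic homotopy. The standard counterexample is $B\Sigma_\infty \to (B\Sigma_\infty)^+ \simeq Q_0S^0$: the source is a $K(\pi,1)$ and hence has trivial $v_i$-periodic homotopy for $i\geq 1$, while the target has the (nontrivial) $v_i$-periodic homotopy of the sphere spectrum. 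So knowing that each $B\Aut_\mathcal{C}(Y)$ has vanishing $v_i$-periodic homotopy does not by itself force the group completion to have vanishing $v_i$-periodic homotopy.

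The paper's proof of part~(i) takes a different and substantially more indirect route: it reduces to $\mathcal{C}=\mathrm{Proj}^\omega(R)$ for a single connective $L_n^{p,f}$-acyclic ring spectrum $R$, and then invokes \Cref{TnKistruncatingT0Tnacyclic} to get $L_{T(i)}K(R)\simeq L_{T(i)}K(\pi_0 R)$. Since $R[\tfrac{1}{p}]=0$, the discrete ring $\pi_0 R$ is a $\mathbb{Z}/p^k$-algebra, and one concludes via \Cref{K1KZpn}. The passage through the plus construction is handled there (inside the proof of \Cref{prop:E(n)-local-K-theory}) by combining the Bousfield--Kuhn trick of \Cref{T(i)-detects}, which converts the question to one about $\Sigma^\infty BGL(R)$, with Bousfield's connectivity result \Cref{lem:vnTnvanishing}; the latter requires a highly connective map, which is why the argument goes through the truncating property rather than attacking $K(R)$ directly. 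Your proposed shortcut bypasses exactly these ingredients, and that is where it breaks.
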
 
\begin{proof}

For the first part, we observe that $\mathcal{C}$ can be written as a filtered colimit of its full subcategories generated by finite direct sums and retracts by finitely many objects. Passing to the direct sum of the generators, and using that $K$-theory commutes with filtered colimits, we may assume that $\mathcal{C}$ is generated under finite direct sums
and retracts by a single object $X$. Hence, by the additive version of the Schwede--Shipley theorem, $\mathcal{C} \simeq
\mathrm{Proj}^\omega(
\hom_{\mathcal{C}}(X, X))$ is the $\infty$-category of finitely generated
projective modules over $\hom_{\mathcal{C}}(X, X)$, which is $L_n^{p,f}$-acyclic by assumption. 
Therefore, $K^{\mathrm{add}}(\mathcal{C}) \simeq \tau_{\geq 0}K(\hom_{\mathcal{C}}(X,X))$ is $T(i)$-acyclic for $1 \leq i \leq n$ by
\Cref{TnKistruncatingT0Tnacyclic}
(together with the fact that the $T(1)$-local $K$-theory of a $p$-power torsion discrete ring
vanishes by \Cref{K1KZpn}).

For the second part, we assume that $\mathcal{C}$ is stable.
The Waldhausen $S_\bullet$-construction gives a 
simplicial stable $\infty$-category $S_\bullet \mathcal{C}$ and a natural
equivalence of spaces (as in \eqref{deloopedK}):
\[ \Omega^\infty\left(  (\tau_{\geq 0}K(\mathcal{C}))[1]\right) = |  S_\bullet(\mathcal{C})^{\simeq} |.  \]
Note that both sides have the structure of $\mathbb{E}_\infty$-monoids, using
the direct sum on $\mathcal{C}$ (which also gives the canonical
$\mathbb{E}_\infty$-monoid structure on the left arising from $\Omega^\infty$),
and the map is an equivalence of $\mathbb{E}_\infty$-spaces. 
Therefore, we may group-complete the terms inside the geometric realization on
the right-hand-side to obtain
an equivalence of connective spectra
\begin{equation} (\tau_{\geq 0} K(\mathcal{C}))[1] \simeq |
K^{\mathrm{add}}(S_\bullet(\mathcal{C}))|,   \label{equivconnectivesp} \end{equation}
where on the right we consider the additive (group-complete) $K$-theory
as above. 
The above $L_n^{p,f}$-local vanishing condition on the stable $\infty$-category
$\mathcal{C}$ 
is stable under passage to $\fun(\Delta^j, \mathcal{C})$ for any
$j \geq 0$. Therefore, by the first paragraph of the proof, we find that the 
right-hand-side of 
\eqref{equivconnectivesp}
is $T(i)$-acyclic for $1 \leq i \leq n$, hence the result. 
\end{proof} 

For an alternative argument that (i) implies (ii) in the above Proposition, see Proposition~\ref{prop:tstructure}.

\begin{lemma}\label{lem:LnpfLocalizationSequence}
	For any ring spectrum $A$, there is an exact sequence
	\[ 
	\cC_{>n} \otimes \Perf(A) \lto \Perf(A) \lto \Perf(L_n^{p,f}A),
	\]
	and the endomorphism spectrum of every object in $\cC_{>n} \otimes \Perf(A)$ is $L_n^{p,f}$-acyclic. 
\end{lemma}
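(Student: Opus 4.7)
The plan is to deduce the exact sequence from the smashing property of $L_n^{p,f}$ established in Lemma~\ref{lem:Lnpf-is-finite}, applied at the level of presentable module categories, and then to pass to compact objects. For the acyclicity statement, I will reduce to generators and use smashing-ness again.

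More concretely, since $L_n^{p,f}\colon \Sp \to \Sp$ is smashing, its unit $\bbS \to L_n^{p,f}\bbS$ is an idempotent $\E_\infty$-algebra, and $L_n^{p,f}A \simeq A \otimes L_n^{p,f}\bbS$ for every ring spectrum $A$. Base change along this map gives a Verdier localization sequence of presentable stable $\infty$-categories
\[ \RMod(A)^{L_n^{p,f}\text{-acyc}} \lto \RMod(A) \lto \RMod(L_n^{p,f}A), \]
where the first term denotes the full subcategory of $A$-modules whose underlying spectrum is $L_n^{p,f}$-acyclic. By Lemma~\ref{lem:Lnpf-is-finite}, the category of $L_n^{p,f}$-acyclic spectra is $\Ind(\cC_{>n})$. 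Since tensor product of presentable stable $\infty$-categories commutes with $\Ind$-completion of small stable $\infty$-categories and since $\RMod(A) \simeq \Ind(\Perf(A))$, one identifies
\[ \RMod(A)^{L_n^{p,f}\text{-acyc}} \simeq \Ind(\cC_{>n}) \otimes_{\Sp} \RMod(A) \simeq \Ind\bigl( \cC_{>n} \otimes \Perf(A) \bigr). \]
Passing to compact objects in the above localization sequence (which is a compactly generated Verdier sequence, hence restricts to an exact sequence in $\Cat_\infty^{\perf}$) then yields the desired exact sequence
\[ \cC_{>n} \otimes \Perf(A) \lto \Perf(A) \lto \Perf(L_n^{p,f}A). \]

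For the second claim, I argue that every $X \in \cC_{>n} \otimes \Perf(A)$ has endomorphism spectrum which is $L_n^{p,f}$-acyclic. It suffices to check this on a generating family. The category $\cC_{>n} \otimes \Perf(A)$ is generated under finite colimits and retracts by the objects $V \otimes M$ with $V \in \cC_{>n}$ and $M \in \Perf(A)$, whose endomorphism spectrum is $V^\vee \otimes V \otimes \End_A(M)$. Since $V$ is $L_n^{p,f}$-acyclic by definition of $\cC_{>n}$, and the class of $L_n^{p,f}$-acyclic spectra is a tensor ideal (again by smashing-ness), the spectrum $V^\vee \otimes V \otimes \End_A(M)$ is $L_n^{p,f}$-acyclic. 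The full subcategory of $\cC_{>n} \otimes \Perf(A)$ on those $X$ for which $\hom(X, Y)$ is $L_n^{p,f}$-acyclic for all $Y$ in our generating family (and vice versa) is closed under finite colimits and retracts, so propagating the acyclicity through the generation finishes the proof.

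The main technical point to be careful about is the identification of the kernel of $\RMod(A) \to \RMod(L_n^{p,f}A)$ with $\Ind(\cC_{>n} \otimes \Perf(A))$; this step uses both that $L_n^{p,f}$-acyclic spectra form a compactly generated localizing ideal (Lemma~\ref{lem:Lnpf-is-finite}) and the compatibility of $\Ind$-completion with presentable tensor products. Once this is in place, the rest is essentially formal.
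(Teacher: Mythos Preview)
Your proof is correct and follows essentially the same approach as the paper: both use Lemma~\ref{lem:Lnpf-is-finite} together with a Thomason--Neeman type argument to produce the exact sequence, and both verify the acyclicity claim on generators of the form $V \otimes M$ (the paper takes $M = A$) via a thick subcategory argument. The only cosmetic difference is that the paper first establishes the exact sequence $\cC_{>n} \to \Perf(\bbS) \to \Perf(L_n^{p,f}\bbS)$ and then tensors with $\Perf(A)$, whereas you work at the level of presentable module categories and pass to compact objects; these are two standard packagings of the same argument.
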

\begin{proof}
Lemma~\ref{lem:Lnpf-is-finite} and the Thomason--Neeman localization theorem \cite[Theorem 2.1]{MR1191736} imply that the sequence of small stable $\infty$-categories
\[
\cC_{>n} \lto \Perf(\bbS) \lto \Perf(L_{n}^{p,f}\bbS)
\]
is exact. Tensoring the above exact sequence with $\Perf(A)$, using the fact that $L_{n}^{p,f}$ is smashing, we obtain the exact sequence
\[ 
\cC_{>n} \otimes \Perf(A) \lto \Perf(A) \lto \Perf(L_n^{p,f}A).
\]
The $\infty$-category $\cC_{>n} \otimes \Perf(A)$ is generated by $A$-modules of the form $A \tensor F$ with $F$ being a finite $p$-local $L_n^{p,f}$-acyclic spectrum. Their endomorphism spectra $DF \tensor F \tensor A$ are $L_n^{p,f}$-acyclic as well and so are thus the endomorphism spectra of all objects of $\cC_{>n} \otimes \Perf(A)$. 
\end{proof}

The following is a reformulation of \Cref{thm-A}.
\begin{thm} 
\label{purity1}
Let $A$ be a ring spectrum. Then the map $A \to L_n^{p,f} A$ induces an
equivalence on $L_{T(i)} K(-)$ for $1 \leq i \leq n$. 
If $n \geq 2$, the map 
$A \to L_{T(1) \oplus \dots \oplus T(n)} A$ induces an equivalence on $L_{T(n)}
K(-)$. 
\end{thm}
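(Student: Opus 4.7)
For the first statement, I would apply $K$, as a localizing invariant, to the exact sequence of \Cref{lem:LnpfLocalizationSequence} to produce the fibre sequence
\[ K(\cC_{>n}\otimes\Perf(A)) \to K(A) \to K(L_n^{p,f}A). \]
Each endomorphism spectrum in the kernel is $L_n^{p,f}$-acyclic by the same lemma, so \Cref{vanishTilocal}(ii) yields $L_{T(i)}K(\cC_{>n}\otimes\Perf(A))=0$ for $1 \leq i \leq n$, from which the first claim follows.

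For the second statement, by the first part it is enough to prove that the localization map $L_n^{p,f}A \to L_{T(1)\oplus\cdots\oplus T(n)}A$ is an $L_{T(n)}K$-equivalence. This map appears as the top row of the pullback square of ring spectra from \Cref{lem:pullback-Lnpfs} with $m=0$, whose bottom row $A\adj \to (L_{T(1)\oplus\cdots\oplus T(n)}A)\adj$ is a map between $T(0)$-local ring spectra. Applying the main theorem of \cite{LT} to this pullback, the induced square on $K$-theory becomes a pullback after replacing the lower right corner by the Land--Tamme construction $B$, which remains $T(0)$-local. It thus suffices to show that $L_{T(n)}K(A\adj)\to L_{T(n)}K(B)$ is an equivalence.

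The heart of the matter, and the principal obstacle, is the vanishing statement $L_{T(n)}K(R)=0$ for every $T(0)$-local ring spectrum $R$ when $n\geq 2$. I would prove this by mirroring the proof of \Cref{TnKistruncatingT0Tnacyclic}: first extend \Cref{prop:E(n)-local-K-theory} to highly connective $T(1)\oplus\cdots\oplus T(n)$-equivalences of connective ring spectra---this is automatic, since such a map is a $\pi_0$-isomorphism, hence a $T(0)$-equivalence by Waldhausen, so the original hypothesis is in force---and then apply \Cref{ktruncatingimpliestruncating} to deduce that $L_{T(n)}K$ is truncating on connective $T(1)\oplus\cdots\oplus T(n)$-acyclic ring spectra. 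Since any $T(0)$-local ring spectrum is $T(i)$-acyclic as a spectrum for $i\geq 1$ (the telescopes $T(i)$ being $p$-power torsion), this truncating property combined with Mitchell's vanishing $L_{T(n)}K(\pi_0 R)=0$ for $n\geq 2$ yields the vanishing for connective $T(0)$-local $R$. The non-connective case then follows by an $S_\bullet$-construction reduction modelled on the proof of \Cref{vanishTilocal}(ii), transferring the vanishing from connective endomorphism rings in the additive setting to the stable setting.
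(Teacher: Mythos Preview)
Your first paragraph is exactly the paper's argument for the first assertion.

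For the second assertion, the setup via the pullback square is fine; the paper in fact obtains an honest pullback in $K$-theory (because the vertical maps are smashing $L_0^{p,f}$-localizations), but your route through the Land--Tamme $\odot$-construction also works, and $B$ is indeed an $A\adj$-algebra, hence $T(0)$-local. So you correctly reduce to the claim that $L_{T(n)}K(R)=0$ for every $T(0)$-local ring spectrum $R$ when $n\geq 2$.

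The gap is in your argument for this vanishing. You assert that a highly connective map of connective ring spectra is ``a $T(0)$-equivalence by Waldhausen, so the original hypothesis [of \Cref{prop:E(n)-local-K-theory}] is in force''. This is incorrect: Waldhausen's theorem is a statement about rational $K$-theory, not about the underlying map of ring spectra, and a $\pi_0$-isomorphism is in general \emph{not} a $T(0)$-equivalence of spectra. Concretely, for a connective $T(0)$-local $R$ the truncation $R\to\pi_0 R$ is a $T(0)$-equivalence only when $R$ is already discrete. You therefore cannot feed such maps back into \Cref{prop:E(n)-local-K-theory}, whose proof genuinely uses the $v_0$-condition via \Cref{lem:vnTnvanishing}(ii): Bousfield's theorem there requires the full range $0\leq i\leq n$, not just $1\leq i\leq n$. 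Your proposed extension of the truncating property, and hence the whole $S_\bullet$-reduction built on it, does not follow.

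The paper's argument is much shorter and sidesteps all of this. Any $T(0)$-local ring spectrum $R$ is an $\bbS\adj$-algebra, so $K(R)$ is a module over $K(\bbS\adj)$. Since $p$-adic $K$-theory is truncating on $\bbS\adj$-algebras \cite[Lemma~2.4]{LT}, one has a $p$-adic equivalence $K(\bbS\adj)\simeq K(\Z\adj)$, and Mitchell's theorem gives $L_{T(n)}K(\Z\adj)=0$ for $n\geq 2$. Hence $L_{T(n)}K(\bbS\adj)=0$, and therefore $L_{T(n)}K(R)=0$ for every such $R$---connective or not, with no need for a separate $S_\bullet$ step.
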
 
\begin{proof}
As $K$-theory is localizing, the homotopy fibre of $K(A) \to K( L_n^{p,f} A)$ coincides with $K(\cC_{>n} \otimes \Perf(A))$ by the preceding lemma. This is $T(i)$-acyclic for $1\leq i\leq n$ by \Cref{vanishTilocal}.
For the last assertion, we consider the pullback diagram
\[\begin{tikzcd}
	L_n^{p,f}A \ar[r] \ar[d] & L_{T(1) \oplus \dots \oplus T(n)} A \ar[d] \\
	A[\tfrac{1}{p}] \ar[r] & (L_{T(1) \oplus \dots \oplus T(n)} A)[\tfrac{1}{p}]
\end{tikzcd}\]
from \cref{lem:pullback-Lnpfs} (note that $L_0^{p,f}A = A[\tfrac{1}{p}]$). By \cite{Tamme} or \cite{LT} and the fact that $L_0^{p,f}$ is smashing, we deduce that the diagram
\[\begin{tikzcd}
	K(L_n^{p,f}A) \ar[r] \ar[d] & K(L_{T(1)\oplus \dots \oplus T(n)}A) \ar[d] \\
	K(A[\tfrac{1}{p}]) \ar[r] & K((L_{T(1)\oplus \dots \oplus T(n)}A)[\tfrac{1}{p}])
\end{tikzcd}\]
is a pullback.\footnote{This also follows more classically from \cref{lem:LnpfLocalizationSequence}.} By the first part, it suffices to prove that the top horizontal map is an equivalence after $T(i)$-localization for $i\geq 2$. Now each term in the bottom row is a module over $K(\bbS[\tfrac{1}{p}])$, which is $p$-adically equivalent to $K(\mathbb{Z}[\tfrac{1}{p}])$ and hence vanishes after $T(i)$-localization for $i\geq 2$ by Mitchell's result \cite{Mitchell}. 
\end{proof}

We now prove the Purity Theorem. 
The result is a direct consequence of \Cref{purity1} (which proves
``one half'' of the result) and Clausen--Mathew--Naumann--Noel's \Cref{thm-cmnn} 
(which proves the ``other half''). 
We note that the results of \cite{CMNN2}
also rely 
on \Cref{purity1} (but not on the Purity Theorem), so there is no
circularity. 

\begin{proof}[Proof of the Purity Theorem] 
We have to show that the map $A \to L_{T(n-1)\oplus T(n)}A$ induces an equivalence on $L_{T(n)}K(-)$.
As in the proof of \cref{purity1}, using that $L_{n-2}^{p,f}$ is a smashing localization (or \cref{lem:LnpfLocalizationSequence}), the diagram
\[\begin{tikzcd}
	K(L_n^{p,f}A) \ar[r] \ar[d] & K(L_{T(n-1)\oplus T(n)}A) \ar[d] \\
	K(L_{n-2}^{p,f}A) \ar[r] & K(L_{n-2}^{p,f}(L_{T(n-1)\oplus T(n)}A)) 
\end{tikzcd}\]
is a pullback. By \cref{purity1}, it suffices to prove that the top horizontal map is an equivalence after $T(n)$-localization.
Now each term in the bottom row is a module over $K(L_{n-2}^{p,f}\bbS)$, which
vanishes $T(n)$-locally by \Cref{thm-cmnn}, so we deduce the claim.
\end{proof}

\begin{rem}\label{CMNN-argument}
In \cite{CMNN2}, the vanishing of $L_{T(n+2)}K(L_n^{p,f}\bbS)$ is deduced as a
special case of a more general result. For the convenience of the reader, we
summarize their argument for the exact vanishing that we use here. We wish
to show the claim by induction over $n$. The case $n=0$ follows from Mitchell's
theorem, as in the proof of the second part of \cref{purity1}. By the
strengthening of Hahn's result \cite{Hahn} obtained in \cite[Lemma 4.5]{CMNN2}, it suffices to show that $K(L_n^{p,f}\bbS)^{tC_p}$ is $T(n+1)$-acyclic. Now, given any commutative algebra in genuine $C_p$-spectra whose underlying spectrum with $C_p$-action is $K(L_n^{p,f}\bbS)$ with trivial $C_p$-action, there is a ring map from its geometric fixed points to $K(L_n^{p,f}\bbS)^{tC_p}$. An example is the $K$-theory of the Borel complete categorical Mackey functor, where the genuine fixed points are $K(\Fun(BC_p,\Perf(L_n^{p,f}\bbS)))$. The transfer for this genuine $C_p$-spectrum is the composite
\begin{equation}\label{transfer}
K(L_n^{p,f}\bbS)_{hC_p} \lto K(L_n^{p,f}\bbS[C_p]) \lto K(\Fun(BC_p,\Perf(L_n^{p,f}\bbS))),
\end{equation}
and by definition the geometric fixed points are the cofibre of this composite.
It hence suffices to show that each of the above two maps is a $T(n+1)$-local
equivalence. For the second, one uses that the Verdier quotient
$\Fun(BC_p,\Perf(L_n^{p,f}\bbS))/\Perf(L_n^{p,f}\bbS[C_p])$ is linear over
$(L_n^{p,f}\bbS)^{tC_p}$. Indeed, calling this quotient $\mathcal{Q}$ and writing $R = L_n^{p,f}\mathbb{S}$, Theorem I.3.3ii and an analogue of Lemma I.3.8iii from \cite{NikolausScholze} imply that $\End_{\mathcal{Q}}(R) \simeq R^{tC_p}$. By Theorem I.3.6 in op.\ cit., $\mathcal{Q}$ has a canonical symmetric monoidal structure and we obtain a symmetric monoidal functor $\Perf(R^{tC_p}) \to \mathcal{Q}$. The spectrum $(L_n^{p,f}\bbS)^{tC_p}$ is itself an algebra over $L_{n-1}^{p,f}\bbS$ by
Kuhn's blueshift result \cite{Kuhn}. Thus, by induction, the second map in \eqref{transfer} is a $T(n+1)$-local equivalence.

For the first map one uses \cref{compKTC} (whose proof relies only on
\cref{purity1}) to obtain a diagram which is cartesian after
$T(i)$-localization, $i \geq 2$,
\[\begin{tikzcd}
	K(L_n^{p,f}\bbS)_{hC_p} \ar[r] \ar[d] & K(L_n^{p,f}\bbS[C_p]) \ar[d] \\
	\TC(\tau_{\geq 0}(L_n^{p,f}\bbS))_{hC_p} \ar[r] & \TC(\tau_{\geq 0}(L_n^{p,f}\bbS)[C_p]).
\end{tikzcd}\]
Now, by \cite[Theorem 1.4.1]{HN}, the cofibre of the lower horizontal map
belongs to the localizing subcategory of spectra generated by  $\tau_{\geq
0}(L_n^{p,f}\bbS)$, and hence vanishes $T(n+1)$-locally as well.
\end{rem}

\begin{quest} 
For a ring spectrum $A$ and for $n \geq 2$, does the map $A \to L_{K(n-1)
\oplus K(n)} A$ induce an equivalence on $K(n)$-local $K$-theory? 
\end{quest} 

The above question reduces to proving an analog of \Cref{purity1} for $L_n$-localization: that is,
for $n \geq 2$, it would suffice to show that $A \to L_n A$ induces an equivalence on
$L_{K(n)} K(-)$.

\begin{rem} 
\label{purity:optimal} Suppose $n \geq 1$. The 
functor $A \mapsto L_{T(n)} K(A)$ does not factor through either of the further
localizations $A \mapsto L_{T(n)} A$ or $A \mapsto L_{T(n-1)} A$; in this
sense, the purity theorem is optimal. 

In fact, 
the results of \cite{HW21, Yuan} give examples of $T(n)$-acyclic ring spectra
for which $L_{T(n)} K(A) \neq 0$. 
This shows that 
$A \mapsto L_{T(n)} K(A)$ does not factor through $A \mapsto L_{T(n)} A$. 
Moreover, we claim that 
$A \mapsto L_{T(n)} K(A)$  does not factor through 
$A \mapsto L_{T(n-1)} A$. 
Suppose the contrary: that is, suppose that any $T(n-1)$-equivalence of
$\mathbb{E}_1$-rings induced an equivalence on $L_{T(n)} K(-)$. 
We will obtain a contradiction as follows. 
Fix any connective spectrum $M$ which is $T(n-1)$-acyclic but not
$T(n)$-acyclic. 
Under our assumption, for any $r \geq 0$,
the map $\mathbb{S} \oplus \Sigma^r M \to \mathbb{S}$ of $\mathbb{E}_1$-rings
(where the former denotes the square-zero extension) induces an equivalence on
$L_{T(n)} K(-)$. It follows that 
the colimit 
$\colim_r \Omega^r\mathrm{fib} ( K( \mathbb{S} \oplus \Sigma^r M) \to
K(\mathbb{S}))$ is $T(n)$-acyclic. But by the equivalence of stable $K$-theory
with topological Hochschild homology \cite[Theorem~5.3.5.1]{DGM}, it follows that this colimit is $\Omega M$,
yielding a contradiction. 
\end{rem} 

We finish this section with an alternative proof of the implication (i) $\Rightarrow$ (ii) in \cref{vanishTilocal} which was explained to us by Ishan Levy. We thank him for suggesting to include this argument here. It is closely related to work of Neeman \cite{Neeman21}.

To introduce necessary notation, let us denote for an additive $\infty$-category $\cC$ by $\Stab(\cC)$ its stable envelope, i.e.\ the compact objects of $\Fun^\add(\cC^\op,\Sp)$. The image of the fully faithful Yoneda embedding $y\colon \cC \to \Stab(\cC)$, using the connective delooping of the mapping spaces given by additivity of $\cC$, generates $\Stab(\cC)$ as an idempotent complete stable $\infty$-category. Moreover, $\tau_{\geq0}K(\Stab(\cC))$ identifies with the group completion of $\cC^\simeq$ \cite[Corollary 8.1.3]{HebeS}. We may therefore define $\K^\add(\cC) = K(\Stab(\cC))$ as a nonconnective version of additive $K$-theory. Note that $K^\add(\cC) \to \K^\add(\cC)$ is a $T(i)$-equivalence for all $i\geq 1$.

\begin{prop}\label{prop:tstructure}
Let $\cC$ be a stable $\infty$-category. Then there is a fibre sequence $K(\cD) \to \K^\add(\cC) \to K(\cC)$ where $\cD$ is a stable $\infty$-category which admits a bounded $t$-structure. In particular, the map $K^\add(\cC) \to K(\cC)$ is 
\begin{enumerate}
	\item  a $T(i)$-local equivalence for $i\geq 2$, and
	\item  a $T(1)$-local equivalence if for all objects $X \in \cC$, the ring spectrum $\hom_\cC(X,X)[\tfrac{1}{p}]$ vanishes.
\end{enumerate}
\end{prop}
\begin{proof}
The $\infty$-category $\Fun^\add(\cC^\op,\Sp)$ admits the pointwise $t$-structure whose connective part is equivalent to $\Fun^\add(\cC^\op,\Spc)$, which is also known as $\cP_\Sigma(\cC)$ --- the nonabelian derived $\infty$-category of $\cC$ ---, and whose heart is the category $\Fun^\add(\cC^\op,\Ab)$, the Freyd envelope also studied in \cite[Ch.\ 5]{Neeman01}. When $\cC$ is stable, $\Ind(\cC)$ canonically identifies with $\Fun^\exact(\cC^\op,\Sp)$, and we obtain a localization functor
\[ \Fun^\add(\cC^\op,\Sp) \lto \Ind(\cC)\]
given by sending an additive functor to its first Goodwillie derivative.
The kernel  of this localization is generated by compact objects of the form
\begin{equation}\label{ycofiber}
	\mathrm{cofib}\left( (\mathrm{cofib}(y(A) \to y(B)) \to y(C))
\right) 
\end{equation} 
for each cofibre sequence $A \to B
\to C $ in $\mathcal{C}$; we let $\mathcal{D}$ denote the compact objects in
this kernel, or equivalently the thick subcategory generated by the above
objects \eqref{ycofiber}. In particular, there is a fibre sequence $K(\cD) \to K(\Stab(\cC)) \to K(\cC)$. We proceed by showing that $\cD$ admits a bounded $t$-structure. To that end, \cite[Theorem~4.26]{PP23} implies that the $t$-structure on $\Fun^\add(\cC^\op,\Sp)$ restricts to a $t$-structure on $\Stab(\cC)$. Moreover, we claim that $\cD$ consists precisely of the $t$-bounded objects in $\Stab(\cC)$. To see that $\cD$ consists of $t$-bounded objects, it suffices to observe that the generators \eqref{ycofiber} belong to the heart. Conversely, given a $t$-bounded object of $\Fun^\add(\cC^\op,\Sp)$, the usual expression of the
first Goodwillie derivative shows that its image in $\Ind(\cC)$ vanishes.

The theorem of the heart
\cite{Barwick} then implies that $K_{\geq 0}(\mathcal{D}) =
K_{\geq 0}(\mathcal{D}^{\heartsuit})$ admits the structure of a 
$K(\mathbb{Z})$-module and thus has vanishing $T(i)$-localizations for $i \geq
2$, cf.~\cite{Mitchell}. Thus, $L_{T(i)} K^{\mathrm{add}}(\mathcal{C}) \xrightarrow{\sim} L_{T(i)}
K(\mathcal{C})$ for $i \geq 2$ (this fact is also proved as
\Cref{additivevsexactK} below). 

Now suppose that for all objects $X \in \cC$ we have $\hom_\cC(X,X)[\tfrac{1}{p}]=0$. Then the same
holds true for objects in $\mathcal{D}$, whence $L_{T(1)}K(\mathcal{D}) = L_{T(1)}
K_{\geq 0}(\mathcal{D}^{\heartsuit})) = 0$ since $K_{\geq 0}(\mathcal{D}^{\heartsuit}) = 
K_{\geq 0}( \mathrm{Mod}_{\mathbb{F}_p}(\mathcal{D}^{\heartsuit}))$
 by the d\'evissage theorem for abelian
categories \cite[Theorem~4]{QuillenHigherK} and since  $L_{T(1)} K(\mathbb{F}_p) = 0$. Thus,  we find that $L_{T(1)}
K^{\mathrm{add}}(\mathcal{C}) \xrightarrow{\sim} L_{T(1)} K(\mathcal{C})$.  
\end{proof}

\section{Consequences and examples}

In this section we discuss some consequences and examples of our main result.

\subsection{Direct consequences}
	\label{sec:examples}

To begin with, we record some immediate corollaries of 
the Purity Theorem. 

\begin{cor} 
\label{easyvanish}
Let $R$ be a ring spectrum which is $T(n) \oplus T(n-1)$-acyclic for some $n
\geq 1$. Then $L_{T(n)}
K(R) = 0$. \qed
\end{cor}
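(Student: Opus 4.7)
The plan is to apply Theorem A (which is \Cref{puritythm:sec2}) directly to the terminal map $R \to 0$, where $0$ denotes the zero ring spectrum (the terminal object in the $\infty$-category of ring spectra). Since the fibre of $R \to 0$, viewed as a map of underlying spectra, is just $R$ itself, the hypothesis that $R$ is $T(n) \oplus T(n-1)$-acyclic is exactly the statement that this map is a $T(n-1) \oplus T(n)$-equivalence of ring spectra.

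Theorem A then tells us that $K(R) \to K(0)$ is a $T(n)$-equivalence. But $\Perf(0)$ is the trivial stable $\infty$-category, so $K(0) = 0$, and we conclude that $L_{T(n)} K(R) = 0$.

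There is really no obstacle here: the result is essentially a tautological reformulation of \Cref{puritythm:sec2} in the special case $B = 0$. The only thing worth double-checking is that ``being a $T(n-1) \oplus T(n)$-equivalence'' is the right notion when $B = 0$, which it is since for any spectrum $E$, a map $X \to 0$ is an $E$-equivalence precisely when $X$ is $E$-acyclic.
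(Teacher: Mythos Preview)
Your argument is correct and is exactly the intended one: the paper simply marks this corollary with a \qed, indicating it follows immediately from \Cref{puritythm:sec2} applied to $R \to 0$.
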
 

\begin{cor} 
Let $n \geq 2$. Then for any ring spectrum $R$, we have that the canonical map $L_{T(n)} K(\tau_{\geq
0} R ) \to L_{T(n)} K(R)$ is an equivalence. \qed
\end{cor}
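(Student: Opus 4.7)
The plan is simply to apply the main purity theorem (\Cref{puritythm:sec2}) to the canonical map $\tau_{\geq 0} R \to R$, after verifying that this map is a $T(n-1) \oplus T(n)$-equivalence for every $n \geq 2$.

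First I would note that the connective cover $\tau_{\geq 0} R$ inherits a ring spectrum structure from $R$, and the canonical map $\tau_{\geq 0} R \to R$ is a map of ring spectra. Its fibre is $\tau_{\leq -1} R$, so it suffices to show that a bounded-above spectrum is $T(i)$-acyclic for every $i \geq 1$. This is \Cref{lem:basicproperties}(ii): the map $\tau_{\geq k} X \to X$ is a $v_i$-periodic equivalence for all $k$ and all $i \geq 1$, because $v_i$-periodic homotopy groups of a bounded-above spectrum vanish. Combined with \Cref{lem:basicproperties}(viii), this means $\tau_{\geq 0} R \to R$ is a $T(i)$-equivalence for every $i \geq 1$.

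In particular, for $n \geq 2$ we have $n-1 \geq 1$, so the map $\tau_{\geq 0} R \to R$ is a $T(n-1) \oplus T(n)$-equivalence. Applying \Cref{puritythm:sec2} to this map of ring spectra yields that $K(\tau_{\geq 0} R) \to K(R)$ is a $T(n)$-equivalence, which is precisely the claim.

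There is no real obstacle here: the statement is essentially an immediate consequence of the main theorem combined with the basic fact that truncation does not affect $v_i$-periodic homotopy for $i \geq 1$. The only thing to be slightly careful about is that the hypothesis of the main theorem requires a $T(n-1) \oplus T(n)$-equivalence, which is why the assumption $n \geq 2$ is necessary (for $n=1$ one would need a $T(0) \oplus T(1)$-equivalence, but rationally $\tau_{\geq 0} R \to R$ need not be an equivalence).
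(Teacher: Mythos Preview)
Your proof is correct and matches the paper's approach: the corollary is marked with a \qed there, being an immediate consequence of \Cref{puritythm:sec2} applied to $\tau_{\geq 0} R \to R$, which is a $T(i)$-equivalence for all $i \geq 1$ by \Cref{lem:basicproperties}(ii) and (viii). One tiny quibble: the \emph{cofibre} of $\tau_{\geq 0} R \to R$ is $\tau_{\leq -1} R$ (the fibre is its desuspension), but this is immaterial since either is bounded above and you invoke \Cref{lem:basicproperties}(ii) directly anyway.
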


In the case of $\E_\infty$-rings we furthermore find the following redshift phenomenon: 
\begin{cor}
	\label{cor:T(n)-local-vanishing-of-K(A)-for-E-infty}
Let $A$ be an $\E_{\infty}$-ring spectrum, $B$ an $A$-algebra, and let $n \geq 0$. Then $L_{T(n)} A = 0$ implies $L_{T(n+i)}K(B)= 0$ for every integer $i \geq 1$.
\end{cor}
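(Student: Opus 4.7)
The plan is to reduce the statement to a direct application of \Cref{easyvanish}: that corollary states that a $T(m-1)\oplus T(m)$-acyclic ring spectrum $R$ satisfies $L_{T(m)}K(R) = 0$, so with $m = n+i$ it will suffice to show that $B$ is $T(n+i-1)\oplus T(n+i)$-acyclic.

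The easy half is to transfer acyclicity from $A$ to $B$: since $B$ is an $A$-module, for any spectrum $M$ one has $B \otimes M \simeq B \otimes_A (A \otimes M)$, so whenever $A \otimes M = 0$ also $B \otimes M = 0$. It therefore remains to upgrade the single hypothesis $L_{T(n)} A = 0$ to $T(m)$-acyclicity of $A$ for every $m \geq n$.

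This upgrade step is the main obstacle, and it is where the $\E_\infty$-hypothesis on $A$ does essential work (the analogous statement for $\E_1$-rings is false, e.g.\ the mod $p$ Moore spectrum, wherever it is $\E_1$, is $T(0)$-acyclic but not $T(1)$-acyclic). The input I would invoke is Hahn's theorem \cite{Hahn}: an $\E_\infty$-ring spectrum which is $T(n)$-acyclic is automatically $T(m)$-acyclic for every $m \geq n$. Applied to $A$ this yields $T(m)\otimes A = 0$ for all $m \geq n$; propagating these acyclicities to $B$ via the previous step shows that $B$ is $T(n+i-1)\oplus T(n+i)$-acyclic for every $i \geq 1$ (using $n+i-1 \geq n$), and \Cref{easyvanish} concludes the proof.
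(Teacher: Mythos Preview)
Your proof is correct and follows essentially the same route as the paper's: invoke Hahn's result to upgrade $T(n)$-acyclicity of $A$ to $T(m)$-acyclicity for all $m\geq n$, pass this to $B$ via the module structure, and conclude with \Cref{easyvanish} (the paper cites \Cref{puritythm:sec2} directly, which amounts to the same thing). One small precision: Hahn's theorem in \cite{Hahn} is stated for $K(n)$-acyclicity, so the paper also invokes \Cref{telescope conjecture on ring spectra} to translate between $K(m)$- and $T(m)$-acyclicity on ring spectra; you should insert that bridge as well.
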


\begin{proof}
If $A$ is $T(n)$-acyclic, then $A$, and hence also $B$, is $T(n+j)$-acyclic for all $j \geq 0$ by \cite{Hahn} and Lemma~\ref{telescope conjecture on ring spectra}. Thus, the result follows from the Purity Theorem. 
\end{proof}

Next, we include the following slight variants of \Cref{thm-A} in the
connective case, and an analog for topological cyclic homology.

\begin{cor}\label{height-1-connective}
Let $A \to B$ be a $T(1)\oplus \dots \oplus T(n)$-equivalence between connective ring spectra which induces a surjection on $\pi_0$ whose kernel is nilpotent. Then $K(A) \to K(B)$ is again a $T(1)\oplus \dots \oplus T(n)$-equivalence.
\end{cor}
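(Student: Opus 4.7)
The plan is to verify that $K(A)\to K(B)$ is a $T(i)$-equivalence for each $i$ with $1\leq i\leq n$ separately. For the range $2\leq i\leq n$, this follows immediately from the main theorem \Cref{puritythm:sec2} applied with $n$ replaced by $i$: the required hypothesis of that theorem, namely that $A\to B$ be a $T(i-1)\oplus T(i)$-equivalence, is included in our assumption (since $i-1\geq 1$). This disposes of all cases except $i=1$.

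The case $i=1$ is delicate, since the main theorem would require a $T(0)\oplus T(1)$-equivalence which is not part of our hypothesis. Here I plan to exploit the nilpotent kernel condition. First, by \Cref{purity1}(i), both $K(A)\to K(L_1^{p,f}A)$ and $K(B)\to K(L_1^{p,f}B)$ are $T(1)$-equivalences, so it suffices to show $K(L_1^{p,f}A)\to K(L_1^{p,f}B)$ is a $T(1)$-equivalence. Applying the pullback square of \Cref{lem:pullback-Lnpfs} (with $m=0$) to both $A$ and $B$, and using that $K$ sends such ring-spectrum pullbacks to pullbacks of spectra (by \cite{Tamme} and \cite{LT}, using that $L_0^{p,f}$ is smashing, exactly as in the proof of the second part of \Cref{purity1}), together with the fact that $A\to B$ being a $T(1)$-equivalence forces $L_{T(1)}A\to L_{T(1)}B$ and therefore also $(L_{T(1)}A)[\tfrac{1}{p}]\to (L_{T(1)}B)[\tfrac{1}{p}]$ to be equivalences, the fiber of $K(L_1^{p,f}A)\to K(L_1^{p,f}B)$ is identified with the fiber of $K(A[\tfrac{1}{p}])\to K(B[\tfrac{1}{p}])$. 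The task thus reduces to showing the latter is a $T(1)$-equivalence.

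For this final step, $A[\tfrac{1}{p}]\to B[\tfrac{1}{p}]$ remains a connective ring spectrum map with $\pi_0$-surjection and nilpotent kernel (the kernel survives inverting $p$), and both source and target are $T(j)$-acyclic for $j\geq 1$. I expect the required $T(1)$-equivalence on $K$-theory to follow from Waldhausen's theorem (in the form of \cite[Lemma~2.4]{LT}) combined with nilinvariance of $T(1)$-local $K$-theory on discrete rings, which at height $1$ agrees with nilinvariance of $K(1)$-local $K$-theory (itself derivable from \Cref{purity1}, and formally entailed by the truncating property \Cref{TnKistruncatingT0Tnacyclic} after suitable reduction). The main obstacle is precisely this $i=1$ case: without a $T(0)$-equivalence in the hypothesis the main theorem is unavailable, and one must combine the nilpotent kernel condition with the Waldhausen--Tamme machinery and the nilinvariance of $T(1)$-local $K$-theory on the discrete level to conclude.
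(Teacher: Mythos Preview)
Your approach is correct and lands on the same endgame as the paper: reduce the $i=1$ case to showing that $K(A[\tfrac{1}{p}])\to K(B[\tfrac{1}{p}])$ is a $T(1)$-equivalence. The organization differs. The paper forms a single auxiliary pullback $P = B\times_{B[1/p]} A[\tfrac{1}{p}]$, notes that $K$ of this square is a pullback, shows directly that $K(A[\tfrac{1}{p}])\to K(B[\tfrac{1}{p}])$ is a $p$-adic equivalence (hence a $T(i)$-equivalence for all $i\geq 1$), and then observes that $A\to P$ is a $T(0)\oplus\dots\oplus T(n)$-equivalence, so \Cref{purity1} applies. You instead apply \Cref{purity1} first to pass to $L_1^{p,f}$ on both sides and then compare two copies of the arithmetic fracture square from \Cref{lem:pullback-Lnpfs}; this is valid but slightly more elaborate. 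Note also that the paper's single argument handles all $i\geq 1$ at once, so the appeal to the full \Cref{puritythm:sec2} for $i\geq 2$ is unnecessary.

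Your final paragraph is where you lose precision. All you need is that $p$-adic $K$-theory is truncating on $\bbS[\tfrac{1}{p}]$-algebras \cite[Lemma~2.4]{LT}, hence nilinvariant \cite[Corollary~3.5]{LT}; this immediately gives that $K(A[\tfrac{1}{p}])\to K(B[\tfrac{1}{p}])$ is a $p$-adic equivalence, and therefore a $T(1)$-equivalence by \Cref{lem:basicproperties}(v). Invoking nilinvariance of $T(1)$-local $K$-theory on discrete rings is an unnecessary detour (and in the paper that fact is \emph{deduced} later from \Cref{purity1}, so citing it here is inelegant even if not circular). The reference to \Cref{TnKistruncatingT0Tnacyclic} is a red herring: $A[\tfrac{1}{p}]$ is $T(0)$-local, not $L_1^{p,f}$-acyclic, so that proposition does not apply.
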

\begin{proof}
Consider the pullback diagram
\[\begin{tikzcd}
	P \ar[r] \ar[d] & B \ar[d] \\
	A[\tfrac{1}{p}] \ar[r] & B[\tfrac{1}{p}]
\end{tikzcd}\]
Since $P \to A[\tfrac{1}{p}]$ is a $T(0)$-localization, applying $K$-theory to the diagram yields again a pullback, e.g.\ by \cite[Main Theorem]{LT}. Furthermore, the map $K(A[\tfrac{1}{p}]) \to K(B[\tfrac{1}{p}])$ is a $p$-adic equivalence, as $p$-adic $K$-theory is truncating on $\bbS[\tfrac{1}{p}]$-algebras \cite[Lemma 2.4]{LT} and hence also nilinvariant \cite[Corollary 3.5]{LT}. Hence, $K(P) \to K(B)$ is a $T(i)$-equivalence for all $i\geq 1$. Furthermore, $A \to P$ is a $T(0)\oplus T(1) \oplus \dots \oplus T(n)$-equivalence. \cref{purity1} together with the already established results thus implies that $K(A) \to K(B)$ is also a $T(1)\oplus \dots \oplus T(n)$-equivalence.
\end{proof}

\begin{cor}
Let $A \to B$ be a $T(1)\oplus \dots \oplus T(n)$-equivalence between connective ring spectra which induces a surjection on $\pi_0$ whose kernel is nilpotent. Then $\TC(A) \to \TC(B)$ is again a $T(1)\oplus \dots \oplus T(n)$-equivalence.
\end{cor}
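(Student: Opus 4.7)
The plan is to reduce the statement to the previous corollary via the Dundas--Goodwillie--McCarthy theorem. Since $A \to B$ is a map of connective ring spectra that is surjective on $\pi_0$ with nilpotent kernel, the DGM theorem applies and the commutative square
\[
\begin{tikzcd}
K(A) \ar[r] \ar[d] & K(B) \ar[d] \\
\TC(A) \ar[r] & \TC(B)
\end{tikzcd}
\]
is cartesian. Equivalently, the induced map on vertical fibres
\[
\fib\bigl(K(A) \to K(B)\bigr) \stackrel{\simeq}{\lto} \fib\bigl(\TC(A) \to \TC(B)\bigr)
\]
is an equivalence.

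By the preceding corollary (\Cref{height-1-connective}), the hypotheses imply that $K(A) \to K(B)$ is a $T(1) \oplus \dots \oplus T(n)$-equivalence, i.e.\ the fibre of the top row is $T(i)$-acyclic for $1 \leq i \leq n$. Transporting this along the equivalence of fibres above, the fibre of $\TC(A) \to \TC(B)$ is $T(i)$-acyclic for $1 \leq i \leq n$ as well, which is exactly the claim.

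There is essentially no obstacle once the DGM pullback square is invoked; the only thing to verify is that the hypotheses of DGM are met, which they are by assumption (connectivity of $A$ and $B$, plus the nilpotent surjection on $\pi_0$). In particular, this argument works verbatim for any localizing invariant $E$ for which the fibre of $K \to E$ is nilinvariant in the DGM sense, so one could phrase the corollary more generally if desired.
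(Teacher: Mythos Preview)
Your proof is correct and follows the same approach as the paper: invoke the Dundas--Goodwillie--McCarthy cartesian square and apply \Cref{height-1-connective}. One cosmetic slip: in your displayed square the maps $K(A)\to K(B)$ and $\TC(A)\to\TC(B)$ are the \emph{horizontal} arrows, so the fibres you compare are the horizontal fibres, not the vertical ones as you write; the mathematics is unaffected.
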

\begin{proof}
By the Dundas--Goodwillie--McCarthy theorem \cite[Theorem VII.0.0.2]{DGM}, there is a cartesian square
\[
\begin{tikzcd}
K(A) \ar[r] \ar[d] & \TC(A) \ar[d] \\
K(B) \ar[r] & \TC(B).
\end{tikzcd}
\]
So we deduce the corollary from \cref{height-1-connective}.
\end{proof}

\begin{rem}
If $A \to B$ is a $T(0)$-equivalence between connective ring spectra inducing a surjection on $\pi_0$ whose kernel is nilpotent, then it is also true that the map $K(A) \to K(B)$ is a $T(0)$-equivalence. Thus, the same also holds true for $\TC(A) \to \TC(B)$.
\end{rem}

Let us call a morphism $f\colon R\to S$ of $\E_{\infty}$-ring spectra $n$-\emph{nice} if $R$ and $S$ are connective, $f$ induces an isomorphism on $\pi_0$ and after $H\Q\oplus T(1)\oplus \cdots \oplus T(n)$-localization, and $\pi_0(R) \cong \Z$. We say that $f$ is nice if it is $n$-nice for all $n\geq 0$.
\begin{cor}\label{cor:nice}
If $R\to S$ is an $n$-nice morphism of $\E_{\infty}$-ring spectra, $K(R)\to K(S)$ is again $n$-nice.
\end{cor}
\begin{proof}
By \cite[Theorem 9.53]{BGT}, we have $K_i(R) = K_i(\pi_0(R))$ for $i\leq 0$ and analogously for $S$. We deduce that $K(R)$ and $K(S)$ are connective with $\pi_0$ isomorphic to $\Z$. By \cite[Lemma 2.4]{LT}, $K(f)\colon K(R)\to K(S)$ is a rational equivalence and by \Cref{height-1-connective} a $T(1)\oplus \cdots \oplus T(n)$-equivalence.
\end{proof}

\begin{ex} The preceding corollary is tailor-made for applications to $i$-fold iterated algebraic $K$-theory
$K^{(i)}$. For example, we will argue momentarily that the canonical truncation map $K(\Z/p^k\Z) \to \Z$ is nice, so that $K^{(i)}(\Z/p^k\Z) \to K^{(i-1)}(\Z)$ is an $H\Q \oplus T(1) \oplus \dots $-equivalence for all $i\geq 1$ by induction and Corollary~\ref{cor:nice}. To see that $K(\Z/p^k\Z) \to \Z$ is nice, we first observe that the map $K_n(\Z/p^k\Z) \to K_n(\F_p)$ is an isomorphism for $n\leq 0$. This is true more generally for any quotient of a discrete ring by a nilpotent ideal, as follows from an inductive argument using the fundamental theorem of $K$-theory \cite[Ch.~IV, Corollary 8.4.1]{Weibel} and the fact that $K_0$ is invariant under quotients by a nilpotent ideal \cite[Ch.~II, Lemma 2.2]{Weibel}. Furthermore, we have $K(\Z/p^k\Z)\otimes \Q \simeq K(\F_p)\otimes \Q \simeq \Q$, using e.g.\ \cite[Corollary~5.4]{Weibel2} and Quillen's seminal calculation of the $K$-theory of finite fields \cite{Quillen}. Finally, both $K(\Z/p^k\Z)$ and $\Z$ are $T(i)$-acyclic for $i\geq 1$; for the former this follows from Corollary~\ref{cor:T(n)-local-vanishing-of-K(A)-for-E-infty} and for the latter it follows e.g.\ from Lemma~\ref{lem:basicproperties}(ii).
\end{ex}

\subsection{Examples of vanishing results}
\label{subsec:examples1}

We give various examples showing that the Purity Theorem (or
\Cref{easyvanish}) implies vanishing
statements for suitable telescopic localizations of the $K$-theory of ring
spectra; this recovers a number of existing results in the literature. 

First, we begin with the case of $K(n)$, cf.~also \cite{AKS} in the case $n =
2, p = 2,3$. 
\begin{cor}\label{K of Morava K-theory}
The spectrum $K(K(n))$ vanishes $T(m)$-locally for every $m\not= 0,n,n+1$. 
\qed
\end{cor}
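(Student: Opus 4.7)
The plan is to observe that this is an almost immediate application of \Cref{easyvanish}. Let me spell out the minor verification needed.

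First I would note that $K(n)$ is a ring spectrum, and that by \Cref{lem:basicproperties}(iii) we have $T(k) \otimes K(n) = 0$ for every $k \neq n$. Consequently, $K(n)$ is $T(m) \oplus T(m-1)$-acyclic if and only if both $m \neq n$ and $m - 1 \neq n$, that is, $m \notin \{n, n+1\}$.

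Given any $m$ with $m \neq 0, n, n+1$, we have $m \geq 1$ and the preceding paragraph shows $K(n)$ is $T(m) \oplus T(m-1)$-acyclic. Applying \Cref{easyvanish} to $R = K(n)$ then yields $L_{T(m)} K(K(n)) = 0$, as desired.

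There is no real obstacle here: all of the work has been done in the proof of \Cref{purity1} and its corollary \Cref{easyvanish}; the only input beyond that is the standard fact that different height Morava $K$-theories have orthogonal Bousfield classes (already recorded in \Cref{lem:basicproperties}(iii)). The case $m = n+1$ is excluded precisely because the only relevant $T(m-1) = T(n)$ is no longer acyclic on $K(n)$, and the case $m = n$ is excluded since $T(n) \otimes K(n) \neq 0$; both cases lie genuinely outside the range of \Cref{easyvanish}. The case $m = 0$ is excluded for the formal reason that \Cref{easyvanish} (and our main purity theorem) only makes assertions about $T(n)$-local $K$-theory for $n \geq 1$.
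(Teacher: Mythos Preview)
Your proof is correct and matches the paper's approach exactly: the paper simply writes \qed after the statement, indicating that it follows immediately from \Cref{easyvanish} together with \Cref{lem:basicproperties}(iii). One small inaccuracy in your commentary: \Cref{easyvanish} is a corollary of the full \Cref{main-theorem} (which relies on \cite{CMNN2}), not of \Cref{purity1} alone; as the paper notes in the introduction, this particular vanishing uses the full strength of \Cref{main-theorem}.
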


\begin{rem}
\label{KofKmrem}
Using the d\'evissage result \cite[Proposition~4.4]{AGHeller} (preceded by
\cite{Barwick-Lawson} for connective $K$-theory) we can also understand
the $T(0)$-localization of $K(K(m))$. There is a fibre sequence 
\begin{equation}
	\label{eq:k(m)-K(m)-localization-sequence}
K(\F_p) \lto K(k(m)) \lto K(K(m)) 
\end{equation}
where $k(m)$ is the connective cover of $K(m)$ and the first map is induced by the functor $\Perf(\F_{p}) \to \Perf(k(m))$ given by restriction of scalars along the canonical map $k(m) \to \F_{p}$. As $K(-)\adj$ is truncating on $\bbS\adj$-acyclic ring spectra, the canonical map $K(k(m)) \to K(\F_{p})$ is a $T(0)$-equivalence.
The composite 
\[ 
K(\F_p) \lto K(k(m)) \lto K(\F_p) 
\]
is induced by the functor $\Perf(\F_p) \to \Perf(k(m)) \to \Perf(\F_p)$ sending $X$ to $X \otimes_{k(m)} \F_{p}$, which is equivalent to $\id \oplus \Sigma^{2p^m-1}$ as there is a fibre sequence
\[ 
\Sigma^{2p^m-2} k(m) \stackrel{v_m}{\lto} k(m) \lto H\F_p.
\]  
Upon applying any localizing invariant, this gives the zero map. From~\eqref{eq:k(m)-K(m)-localization-sequence} we thus obtain a fibre sequence
\[ 
K(\F_p)\adj \stackrel{0}{\lto} K(\F_p)\adj \lto K(K(m)) \adj
\]
and hence an equivalence 
\[
K(K(m))\adj \simeq K(\F_{p})\adj \oplus \Sigma K(\F_{p})\adj.
\]
\end{rem}

\begin{cor} 
\label{truncatedsphere}
For any $n \geq 0$, 
we find that $L_{T(i)} K( \tau_{\leq n}\mathbb{S}) = 0$ for $i \geq 2$. \qed
\end{cor}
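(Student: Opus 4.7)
The plan is to reduce the statement directly to \Cref{easyvanish}, which tells us that $L_{T(i)} K(R) = 0$ for any ring spectrum $R$ that is $T(i-1) \oplus T(i)$-acyclic (taking $n=i$ in that corollary). So it suffices to verify that $R = \tau_{\leq n}\mathbb{S}$ is both $T(i-1)$-acyclic and $T(i)$-acyclic for every $i \geq 2$; equivalently, that $\tau_{\leq n}\mathbb{S}$ is $T(j)$-acyclic for every $j \geq 1$.

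This acyclicity is a formal consequence of boundedness. Indeed, by \Cref{lem:basicproperties}(viii), a spectrum is $T(j)$-acyclic precisely when its $v_j$-periodic homotopy groups vanish, and by \Cref{lem:basicproperties}(ii) (applied to $X = \tau_{\leq n}\mathbb{S}$ with $k$ larger than $n$) the natural map $0 = \tau_{\geq n+1} \tau_{\leq n}\mathbb{S} \to \tau_{\leq n}\mathbb{S}$ is a $v_j$-periodic equivalence for $j \geq 1$. Hence $v_j^{-1}\pi_*(\tau_{\leq n}\mathbb{S}) = 0$, and $\tau_{\leq n}\mathbb{S}$ is $T(j)$-acyclic as desired.

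Since $i \geq 2$ ensures that $i-1 \geq 1$ as well, the ring spectrum $\tau_{\leq n}\mathbb{S}$ is $T(i-1) \oplus T(i)$-acyclic, and \Cref{easyvanish} then yields $L_{T(i)} K(\tau_{\leq n}\mathbb{S}) = 0$. There is no real obstacle here; the only subtlety is that the bound $i \geq 2$ cannot be weakened to $i \geq 1$ by this method, because $\tau_{\leq n}\mathbb{S}$ fails to be $T(0)$-acyclic (its $\pi_0$ is $\mathbb{Z}$), which is exactly why the statement is restricted to $i \geq 2$.
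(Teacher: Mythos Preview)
Your proof is correct and matches the paper's intended argument: the corollary is stated with a bare \qed precisely because it follows immediately from \Cref{easyvanish} once one notes that the bounded spectrum $\tau_{\leq n}\mathbb{S}$ is $T(j)$-acyclic for all $j \geq 1$. Your justification of that acyclicity via \Cref{lem:basicproperties}(ii) and (viii) is exactly the kind of detail the reader is expected to supply.
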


Ben Antieau has already shown previously that a certain quantitative version of Proposition~\ref{prop:E(n)-local-K-theory} implies $L_{T(n)}K(\tau_{\leq m}\bbS) = 0$ at least for all $n$ such that $4p-4\geq n$, where $p$ is the implicit prime in $T(n)$, and conjectured that \Cref{truncatedsphere} is true.

\begin{cor} 
The map $K(BP\langle n \rangle) \to K(E(n))$ is a $T(i)$-equivalence for $i \geq n+1$. Furthermore, both vanish $T(i)$-locally for $i \geq n+2$. \qed
\end{cor}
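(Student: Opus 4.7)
The plan is to combine \cref{main-theorem} and \cref{easyvanish} with standard chromatic facts about $BP\langle n\rangle$ and $E(n)$. The starting observation is that both are ring spectra whose $K(m)$-homology vanishes for $m>n$ (classical); hence by \cref{telescope conjecture on ring spectra} both are $T(m)$-acyclic for $m>n$.

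For the vanishing assertion (the second sentence of the corollary), I would apply \cref{easyvanish} directly: when $i \geq n+2$, both $T(i)$ and $T(i-1)$ annihilate $BP\langle n\rangle$ and $E(n)$, so $L_{T(i)} K(BP\langle n\rangle) = L_{T(i)} K(E(n)) = 0$. This simultaneously handles the $T(i)$-equivalence claim in the range $i \geq n+2$, since both sides are then zero.

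For the first claim in the remaining case $i = n+1$, I would invoke \cref{main-theorem}: it suffices to show that the localization map $BP\langle n\rangle \to E(n)$ is a $T(n) \oplus T(n+1)$-equivalence. The $T(n+1)$-part is immediate from the previous paragraph. For the $T(n)$-part, writing $T(n) = V_n[v^{-1}]$ for a $v_n$-self map $v$ of a type-$n$ complex $V_n$, I would use the Hopkins--Smith uniqueness of $v_n$-self maps to identify a suitable iterate of $v \otimes \id$ with $\id \otimes v_n^a$ as self maps of the $BP\langle n\rangle$-module $V_n \otimes BP\langle n\rangle$. Inverting either then yields
\[ T(n) \otimes BP\langle n\rangle \simeq V_n \otimes BP\langle n\rangle[v_n^{-1}] = V_n \otimes E(n), \]
and similarly $T(n) \otimes E(n) \simeq V_n \otimes E(n)$, with the natural map between them reducing to the identity.

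The main obstacle is this last step, which genuinely requires the Hopkins--Smith uniqueness result applied to self maps of $V_n \otimes BP\langle n\rangle$; everything else is a clean reduction via \cref{main-theorem} and \cref{easyvanish}.
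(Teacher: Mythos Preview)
Your reduction is exactly what the paper has in mind: the corollary is stated without proof because the vanishing for $i \geq n+2$ is \cref{easyvanish}, and for $i = n+1$ one applies \cref{main-theorem} after checking that $BP\langle n\rangle \to E(n)$ is a $T(n) \oplus T(n+1)$-equivalence. The $T(n+1)$-part is immediate as you say.

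Your argument for the $T(n)$-part, however, has a gap. Hopkins--Smith uniqueness of $v_n$-self maps is a statement about self-maps of \emph{finite} $p$-local spectra, and $V_n \otimes BP\langle n\rangle$ is not finite, so the theorem does not apply as stated. One can salvage the idea---first arranging that an iterate of $v$ acts as multiplication by a power of $v_n$ on $K(n)_*V_n$, and then applying the nilpotence theorem in the ring spectrum $DV_n \otimes V_n \otimes BP\langle n\rangle$ to see that $(v\otimes\id)-(\id\otimes v_n^{k})$ is nilpotent---but this is more than you have written and heavier than necessary.

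A cleaner route: since $E(n) = BP\langle n\rangle[v_n^{-1}]$ and the cofibre of multiplication by $v_n$ on $BP\langle n\rangle$ is $BP\langle n-1\rangle$, the cofibre of $BP\langle n\rangle \to E(n)$ is a filtered colimit of iterated extensions of shifts of $BP\langle n-1\rangle$. Now $BP\langle n-1\rangle$ is a $K(n)$-acyclic ring spectrum, hence $T(n)$-acyclic by \cref{telescope conjecture on ring spectra}, and the $T(n)$-equivalence follows.
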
 

\begin{rem}
The chromatic bound for $K(BP\langle n \rangle)$ has been proved previously by Angelini-Knoll--Salch in the case where $BP\langle n \rangle$ admits an $\mathbb{E}_\infty$-structure, \cite{AKS}.

The above result implies that the sequence 
\[ K(BP\langle n-1 \rangle) \lto K(BP\langle n \rangle ) \lto K(E(n)) \]
becomes a fibre sequence after $T(i)$-localization for $i\geq n+1$. Whether or
not  this sequence is a fibre sequence (after replacing the rings with their
$p$-completions) was asked by Rognes, the $n=1$ case being a theorem of
Blumberg--Mandell \cite{BM}, and the $n=0$ case being a classical theorem of
Quillen's. It was then shown by Antieau--Barthel--Gepner that for $n\geq 2$, the
above is not a fibre sequence after rationalization, see \cite{ABG18}.  
\end{rem}

The following is an example that arose from a discussion with George Raptis.
Recall that for a connected space $X$ its Waldhausen $A$-theory is given by
$A(X) = K(\bbS[\Omega X])=K(\Sigma^{\infty}_{+}\Omega X)$. In particular,
$A(\ast) = K(\bbS)$. In the following corollary we assume that $n\geq 1$; the case $n=0$ is due to Waldhausen.
\begin{cor}
Let $W$ be a connected space. If $\Sigma^\infty W$ is $T(n) \oplus T(n-1)$-acyclic, then the
canonical maps $A(\ast) \leftrightarrows A(\Sigma W)$ are mutually inverse 
$T(n)$-equivalences. 
\end{cor}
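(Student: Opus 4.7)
The plan is to reduce the statement to the Main Theorem (Theorem~A) applied to the unit map of ring spectra $\bbS \to \bbS[\Omega \Sigma W]$ induced by the inclusion $\ast \to \Sigma W$. Since $\Sigma W$ has a canonical basepoint, the augmentation $\bbS[\Omega \Sigma W] \to \bbS$ provides a retraction, so after applying $K$-theory we obtain a retraction $A(\Sigma W) \to A(\ast)$ splitting the canonical map $A(\ast) \to A(\Sigma W)$. Once we show that the latter is a $T(n)$-equivalence, the retraction will automatically also be a $T(n)$-equivalence and the two maps will be mutually inverse on $T(n)$-localizations.

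By Theorem~A, to show $A(\ast) \to A(\Sigma W)$ is a $T(n)$-equivalence it suffices to show that $\bbS \to \bbS[\Omega \Sigma W]$ is a $T(n-1) \oplus T(n)$-equivalence of ring spectra. For this, I would invoke the James splitting, which gives a (natural) equivalence
\[ \Sigma^\infty_+ \Omega \Sigma W \simeq \bigoplus_{k \geq 0} \Sigma^\infty W^{\wedge k}, \]
with $W^{\wedge 0} = S^0$. Under this splitting, the unit $\bbS \to \bbS[\Omega \Sigma W]$ corresponds to the inclusion of the $k=0$ summand, so its cofibre is $\bigoplus_{k \geq 1} \Sigma^\infty W^{\wedge k}$.

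Since $\Sigma^\infty W$ is $T(n-1)\oplus T(n)$-acyclic by hypothesis, each smash power $\Sigma^\infty W^{\wedge k} = \Sigma^\infty W \otimes \Sigma^\infty W^{\wedge(k-1)}$ with $k \geq 1$ is $T(n-1)\oplus T(n)$-acyclic (the Bousfield class of a spectrum is closed under tensoring with arbitrary spectra), and so is the direct sum. Hence the unit map has $T(n-1)\oplus T(n)$-acyclic cofibre and is therefore a $T(n-1)\oplus T(n)$-equivalence, as required.

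There is no real obstacle here; the only thing to verify carefully is that the unit map $\bbS \to \bbS[\Omega \Sigma W]$ really corresponds to the inclusion of the $k=0$ summand in the James splitting, which is standard, and that $A(\ast) \to A(\Sigma W)$ indeed admits the retraction described so that a left (resp.\ right) inverse after $T(n)$-localization forces both maps to be mutually inverse equivalences.
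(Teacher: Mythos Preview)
Your proof is correct and follows essentially the same approach as the paper: both use the James splitting to show that the cofibre of the unit $\bbS \to \bbS[\Omega\Sigma W]$, namely $\Sigma^\infty \Omega\Sigma W \simeq \bigoplus_{k\geq 1}\Sigma^\infty W^{\wedge k}$, is $T(n-1)\oplus T(n)$-acyclic, and then invoke Theorem~A. Your version is slightly more explicit about the retraction and why both maps are mutually inverse after $T(n)$-localization, which the paper leaves implicit.
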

For instance, $W$ could be a connected type $m$-complex for $m > n$.
\begin{proof}
The James splitting (see \cite[Chapter 10, Theorem 5]{MR0445484}) gives an equivalence
\[
 \Sigma^\infty \Omega \Sigma W \simeq  \Sigma^\infty \bigvee_{k \geq 1} W^{\wedge k},
\]
which implies that $\Sigma^\infty \Omega \Sigma W$ is $T(n) \oplus T(n-1)$-acyclic.  The
claim thus follows from the Purity Theorem.
\end{proof}

Next, we study the chromatic localization of the $K$-theory of certain Thom spectra $y(m)$ considered in \cite[Section 3]{MRS}; compare \cite{AKQ} for previous work in this direction. To explain the setup, we recall that for a fixed prime $p$, there is an essentially unique map of $\E_2$-spaces 
\[ 
\Omega^2 \Sigma^2 S^1 \lto \BGL_1(\bbS^\swedge_p) 
\]
sending a generator of $\pi_1$ to the element $1-p \in \pi_1(\BGL_1(\bbS^\swedge_p)) \cong \Z_p^\times$. It is a theorem of Mahowald (for $p=2$) and Hopkins (for odd primes) that its Thom spectrum is $H\F_p$ \cite{Mahowald2}; see also \cite{ACB}. We note that $\Omega^2 \Sigma^2 S^1 \simeq \Omega (\Omega S^3)$ and that $\Omega S^3$ has a canonical cell structure with one cell in every even dimension; see \cite[Corollary 17.4]{Milnor}. Let us denote by $F_m(\Omega S^3)$ the $2m$-skeleton of this cell structure. 
One then obtains maps of $\E_1$-spaces
\[ \Omega F_{p^m-1}(\Omega S^3) \lto \Omega^2 S^3 \lto \BGL_1(\bbS^\swedge_p) \]
whose Thom spectra are denoted by $y(m)$, leaving the prime $p$ implicit as always. One has $y(0) = \bbS^\swedge_p$ and $y(\infty) = H\F_p$. The above filtration of $\Omega S^3$ can also be described as the James filtration on $\Omega \Sigma S^2$, compare \cite[Section 3.1]{MRS}.

\begin{lemma}\label{y chromatic vanishing}
The spectrum $y(m)$ is $L_{m-1}^{p,f}$-acyclic. 
\end{lemma}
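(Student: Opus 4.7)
The plan is to reduce the claim to a Morava $K$-theory vanishing statement via the ring-spectrum form of the telescope conjecture, and then to invoke the classical computation of the mod-$p$ homology of $y(m)$. First I would record the two structural features of $y(m)$ that drive the argument. Being the Thom spectrum of an $\E_1$-map, $y(m)$ carries a canonical $\E_1$-ring structure. Moreover, since the Thom spectrum is formed over $\BGL_1(\bbS^\swedge_p)$, the spectrum $y(m)$ is a module over the $p$-complete sphere, hence in particular $p$-complete. This $p$-completeness already yields $y(m) \otimes T(0) = y(m)\adj = 0$, which handles the case $i = 0$ of the claim.

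For $1 \le i \le m-1$, the ring structure on $y(m)$ allows me to apply \cref{telescope conjecture on ring spectra} to reduce the statement $T(i) \otimes y(m) = 0$ to the sharper condition $K(i) \otimes y(m) = 0$. This latter vanishing is the defining chromatic-height property of the Thom spectra $y(m)$ treated in \cite{MRS}: the Thom isomorphism identifies $H\F_{p*}(y(m))$ with $H\F_{p*}(\Omega F_{p^m-1}(\Omega S^3))_+$, and a cellular analysis using the James splitting of $F_{p^m-1}(\Omega S^3)$ exhibits this as the sub-Hopf-algebra of the dual Steenrod algebra generated by the first $m$ Milnor generators (together with the appropriate exterior generators at odd primes). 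From here, a standard change-of-rings Adams spectral sequence argument --- or, equivalently, the observation that the Thom reduction $y(m) \to H\F_p$ is built by attaching only cells detected in this sub-Hopf-algebra --- produces $K(i)_*(y(m)) = 0$ for $1 \le i \le m-1$.

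The main obstacle is precisely the mod-$p$ homology identification above. The James filtration index $p^m - 1$ is chosen so that the first Milnor generator absent from this sub-Hopf-algebra corresponds to the cell carrying a $v_m$-self-map, making the chromatic height of $y(m)$ exactly $m$; verifying this demands the cellular analysis of $\Omega F_{p^m-1}(\Omega S^3)$ carried out in \cite{MRS}. Once this classical computation is in hand, the passage from $K(i)$-acyclicity to $T(i)$-acyclicity and from there to $L_{m-1}^{p,f}$-acyclicity is formal.
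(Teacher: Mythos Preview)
Your argument for the case $i=0$ contains a genuine gap. The implication ``$y(m)$ is an $\bbS^\swedge_p$-module, hence $p$-complete'' is not valid: for instance, $\bbS^\swedge_p\adj$ is a nonzero $\bbS^\swedge_p$-module which is certainly not $p$-complete. Even granting $p$-completeness, the further implication ``$p$-complete, hence $T(0)$-acyclic'' also fails: $\bbS^\swedge_p$ itself is $p$-complete, yet $\bbS^\swedge_p\adj$ has $\pi_0 = \Q_p \neq 0$. The paper instead uses the fact that $\pi_0(y(m)) = \F_p$ (citing \cite{MRS} for odd $p$ and \cite{AKQ} for $p=2$); since $y(m)$ is a ring spectrum in which $p\cdot 1 = 0$, the ring $y(m)\adj$ has $1=0$ and therefore vanishes.

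For $1 \leq i \leq m-1$, your strategy of reducing to $K(i)$-acyclicity via \cref{telescope conjecture on ring spectra} is correct and matches the paper's treatment at $p=2$, where the $K(i)$-acyclicity is cited from \cite{AKQ}. At odd primes the paper takes a slightly different route: rather than passing through Morava $K$-theory, it argues directly that $v_i$ acts nilpotently on $V_i \otimes y(m)$ by comparing the slope $\tfrac{1}{2p^i-2}$ of $v_i$ in the Adams spectral sequence with the vanishing line of slope $\tfrac{1}{2p^m-2}$ established in \cite{MRS}. Both approaches rest on the same homological input from \cite{MRS}, so the difference is largely one of packaging.
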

\begin{proof}
We need to show that $y(m)$ is $T(n)$-acyclic for $n<m$.
For $n=0$ this follows 
because $\pi_{0}(y(m)) = \F_{p}$ (see the paragraph preceding \cite[Equation 3.7]{MRS} for odd $p$ and \cite[Lemma 2.7]{AKQ} for $p=2$).
We now discuss the case where $n>0$.
Again, we distinguish the cases of even and odd primes. For $p=2$ this follows from \cite[Proposition 2.22]{AKQ} and \cref{telescope conjecture on ring spectra}. For odd primes, it is explained in \cite{MRS} that for a finite type $n$ spectrum $V_n$, the Adams spectral sequence for the spectrum $V_n \otimes y(m)$ has a vanishing line of slope $\tfrac{1}{2p^m-2}$, because this is true for $y(m)$. On the other hand, the element $v_n$ acting on $V_n$ gives an element of slope $\tfrac{1}{|v_n|}$ for the Adams spectral sequence. Hence, if $n<m$, it follows that the element $v_n$ is nilpotent on $V_n\otimes y(m)$, so that $T(n)\otimes y(m)$ vanishes as claimed.
\end{proof}

\begin{cor}\label{almost the AKQ-result}
The map $K(y(m)) \to K(\F_p)$ is an $L_{m-1}^{p,f}$-equivalence. In particular, $K(y(m))$ vanishes $T(n)$-locally for $0<n<m$.
\end{cor}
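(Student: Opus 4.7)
The plan is to analyze the $0$th Postnikov truncation $y(m) \to \tau_{\leq 0} y(m) \simeq H\F_p$, which is a ring map since $\pi_0 y(m) = \F_p$, and to apply Corollary \ref{height-1-connective} together with the Remark immediately following it.

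The first step is to verify that both $y(m)$ and $H\F_p$ are $L_{m-1}^{p,f}$-acyclic, so that the fibre of $y(m) \to H\F_p$ is as well. For $y(m)$ this is Lemma \ref{y chromatic vanishing}. For $H\F_p$, multiplication by $p$ is the zero map, so $H\F_p[\tfrac{1}{p}]=0$, and for $i \geq 1$ the groups $\pi_*(H\F_p \otimes V_i)$ are bounded while $v_i$ has positive degree, so $T(i) \otimes H\F_p = 0$. Hence $y(m) \to H\F_p$ is a $T(i)$-equivalence for each $0 \leq i \leq m-1$. Since it is additionally a map of connective ring spectra inducing an isomorphism on $\pi_0$ (so in particular a surjection with nilpotent, i.e., zero, kernel), Corollary \ref{height-1-connective} gives that $K(y(m)) \to K(\F_p)$ is a $T(1)\oplus\dots\oplus T(m-1)$-equivalence, while the Remark following it upgrades this to a $T(0)$-equivalence. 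Combining these yields the first assertion.

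For the vanishing statement, I would invoke Quillen's computation $K(\F_p)^\wedge_p \simeq H\Z_p$, which is $T(n)$-acyclic for every $n \geq 1$. Hence $L_{T(n)} K(\F_p) = 0$ for $n \geq 1$, and combining with the first part gives $L_{T(n)} K(y(m)) = 0$ for $0 < n < m$. The only real content beyond invoking existing results is the verification that the Postnikov truncation fits the hypotheses of Corollary \ref{height-1-connective}, which is a routine check; I anticipate no serious obstacle.
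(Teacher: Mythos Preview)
Your proof is correct and takes essentially the same approach as the paper. The only cosmetic difference is that the paper applies Theorem~\ref{main-theorem} directly (via Corollary~\ref{easyvanish}) to obtain the $T(n)$-local vanishing from the acyclicity of $y(m)$ alone, and then cites Waldhausen's truncating result for the $T(0)$-equivalence, whereas you route through Corollary~\ref{height-1-connective} and Quillen's computation of $K(\F_p)$; these are equivalent packagings of the same ingredients.
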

\begin{proof}
The vanishing follows immediately from the Purity Theorem and \cref{y chromatic vanishing}. 
The map is also a $T(0)$-equivalence, as $T(0)$-local $K$-theory is truncating on $T(0)$-acyclic ring spectra by a result of Waldhausen (see also \cite[Lemma 2.4]{LT}).
\end{proof}

\begin{rem}
This corollary implies the corresponding statement with the $T(i)$ replaced by the Morava $K$-theories $K(i)$. For $p=2$, the latter was previously studied by Angelini-Knoll and Quigley \cite[Theorem 1.3]{AKQ} using trace methods.
\end{rem}

We obtain a similar result for the integral versions $z(m)$ of $y(m)$ which appear in \cite{AKQ} when $p=2$. Again, there are versions for odd primes, but we refrain from spelling them out here.
\begin{cor}
The map $K(z(m)) \to K(\Z_{(2)})$ is a $T(n)$-equivalence for $0<n<m$. 
\end{cor}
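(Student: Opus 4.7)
The plan is to follow the same template as Corollary~\ref{almost the AKQ-result} for $y(m)$, but to use the connective purity result \Cref{height-1-connective} rather than \Cref{puritythm:sec2} directly, since the map $z(m)\to\Z_{(2)}$ is not a $T(0)$-equivalence (the target has $\pi_0 = \Z_{(2)}$, so it is rationally nonzero).

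First I would record the basic structural input: $z(m)$ is a connective ring spectrum with $\pi_0 z(m) = \Z_{(2)}$ (by construction as a Thom spectrum whose full colimit over the James filtration is $H\Z_{(2)}$), and the map $z(m)\to\Z_{(2)}$ is the canonical truncation $z(m)\to\tau_{\le 0} z(m)$; in particular it induces an isomorphism on $\pi_0$.

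Next I would verify that $z(m)$ is $T(n)$-acyclic for every $0<n<m$. This is due to \cite{AKQ} in the case $p=2$, but can also be established directly by repeating the Adams spectral sequence argument of \Cref{y chromatic vanishing}: for any finite type $n$ spectrum $V_n$, the Adams spectral sequence for $V_n\otimes z(m)$ has a vanishing line of slope $\tfrac{1}{2p^m-2}$ (the slope depends only on the James filtration used in the construction, not on whether one starts from $\F_2$ or $\Z_{(2)}$), while a $v_n$-self-map of $V_n$ contributes an element on a line of slope $\tfrac{1}{|v_n|}$, forcing $v_n$-nilpotence of $V_n\otimes z(m)$ for $n<m$.

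Combining this with the fact that $\Z_{(2)}$, being an $H\Z$-module, is $T(n)$-acyclic for every $n\ge 1$, both source and target of $z(m)\to\Z_{(2)}$ are $T(n)$-acyclic for $0<n<m$, so the map is trivially a $T(1)\oplus\dots\oplus T(m-1)$-equivalence. Since both spectra are connective and the map is an isomorphism on $\pi_0$ (in particular a surjection with nilpotent kernel), the hypotheses of \Cref{height-1-connective} are met with $n = m-1$, and that corollary yields a $T(1)\oplus\dots\oplus T(m-1)$-equivalence on $K$-theory, which is exactly the claim. The only genuine obstacle is the $T(n)$-acyclicity of $z(m)$; once that Thom-spectrum-theoretic input is granted, the remainder is a direct application of results already established in the paper.
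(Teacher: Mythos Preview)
Your proposal is correct and follows essentially the same route as the paper: establish $T(n)$-acyclicity of $z(m)$ for $0<n<m$ from \cite{AKQ} and then apply \Cref{height-1-connective} to the connective map $z(m)\to\Z_{(2)}$, which is a $\pi_0$-isomorphism. The only cosmetic difference is that the paper quotes \cite[Proposition~2.22]{AKQ} for $K(n)$-acyclicity and then invokes \Cref{telescope conjecture on ring spectra} to pass to $T(n)$-acyclicity, whereas you state the $T(n)$-acyclicity directly.
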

\begin{proof}
By \cite[Proposition 2.22]{AKQ}, $z(m)$ is $K(n)$-acyclic for $1\leq n< m$, and hence also $T(n)$-acyclic for $1 \leq n <m$, again by \cref{telescope conjecture on ring spectra}. The corollary then follows from \cref{height-1-connective}.
\end{proof}

\subsection{Examples of purity}
We list some further examples of purity statements, special cases of which have been studied in the literature before.

\begin{cor}
	\label{cor:K-of-L1p-localization-for-ko-algs}
Let $A$ be a $ko$-algebra. Then the natural map $K(A) \to K(A[\frac{1}{\beta}])$ is a $T(n)$-local equivalence for all $n \geq 2$. 
\end{cor}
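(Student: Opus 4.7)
The plan is to verify the hypothesis of \Cref{puritythm:sec2}: for the given $n\geq 2$, it suffices to show that the unit map $A \to A[\tfrac{1}{\beta}]$ is a $T(n-1)\oplus T(n)$-equivalence of ring spectra. I will in fact establish the stronger statement that $A \to A[\tfrac{1}{\beta}]$ is a $T(m)$-equivalence for every $m \geq 1$, which uniformly handles all $n\geq 2$.

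For $m \geq 2$, I would use that $ko$ has chromatic height $1$: it is a classical fact that $ko$ is $K(m)$-acyclic for every $m\geq 2$, and hence $T(m)$-acyclic by \Cref{telescope conjecture on ring spectra}. Since $A$ is a $ko$-module, one has $T(m)\otimes A \simeq (T(m)\otimes ko) \otimes_{ko} A = 0$, and similarly for $A[\tfrac{1}{\beta}]$; the map is then trivially a $T(m)$-equivalence in this range.

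The decisive case is $m=1$, where the point is that $\beta$ already acts invertibly on the ring spectrum $L_{T(1)}ko$. Indeed, by the height-$1$ telescope conjecture, $L_{T(1)}ko\simeq L_{K(1)}ko$, and a classical computation identifies the latter with $L_{K(1)}KO$. Since $\beta$ is manifestly a unit in $KO$, it becomes a unit in $L_{T(1)}ko$, and therefore also acts invertibly on the $L_{T(1)}ko$-module $L_{T(1)}A$, so that $L_{T(1)}A \simeq L_{T(1)}A[\tfrac{1}{\beta}]$. I do not foresee substantial obstacles: the only inputs beyond the machinery already developed in the paper are the chromatic acyclicity of $ko$ in heights $\geq 2$ and the classical identification $L_{K(1)}ko \simeq L_{K(1)}KO$. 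Note that one cannot replace \Cref{puritythm:sec2} by the weaker \Cref{purity1} here, as $\beta$ is not invertible rationally and so $A \to A[\tfrac{1}{\beta}]$ need not be a $T(0)$-equivalence; this is precisely why the strengthening of purity in \Cref{puritythm:sec2} dropping the $T(0)$-hypothesis is essential.
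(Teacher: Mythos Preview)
Your proof is correct and follows the same strategy as the paper: verify that $A \to A[\tfrac{1}{\beta}]$ is a $T(m)$-equivalence for all $m \geq 1$ and then invoke the main theorem. The paper's verification is more direct, however: since $ko[\beta^{-1}] = KO$ and $ko = \tau_{\geq 0} KO$, the map $ko \to KO$ is a connective cover map and hence a $T(m)$-equivalence for all $m \geq 1$ by \Cref{lem:basicproperties}(ii) and (viii); tensoring over $ko$ with $A$ gives the claim for $A$ without any case split or appeal to the height-$1$ telescope conjecture. Your closing remark is also mistaken: the second assertion of \Cref{purity1} already shows, for $n \geq 2$, that any $T(1)\oplus\dots\oplus T(n)$-equivalence of ring spectra induces an equivalence on $L_{T(n)}K(-)$, so \Cref{purity1} alone suffices here and the full strength of \Cref{puritythm:sec2} is not needed.
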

\begin{proof}
This follows immediately from the Purity Theorem, as the map 
$ko \to ko[\beta^{-1}]=KO$ is a $T(n)$-equivalence for all $n\geq 1$, hence so is
$A \to A[\frac{1}{\beta}]$ for every $ko$-algebra.
\end{proof}

\begin{rem}\label{rem:Blumberg-Mandell-sequence} 
By work of Blumberg--Mandell \cite{BM}, there is a fibre sequence of connective $K$-theory spectra
\[ K^{\mathrm{cn}}(\Z) \lto K^{\mathrm{cn}}(\ku) \lto K^{\mathrm{cn}}(KU) \]
and likewise for $\ko$ and $KO$ in place of $\ku$ and $KU$. 
Together with Mitchell's result this implies \cref{cor:K-of-L1p-localization-for-ko-algs} in the case where $A$ is $\ko$ or $\ku$. Note also that for $ko$-algebras $A$, we have that $K(A)$ is $T(n)$-acyclic for $n\geq 3$; this follows from \cref{easyvanish}, but was shown for $A = ku$ and $p\geq 5$ already in \cite{AR} and in general in \cite{CMNN2}. Thus \cref{cor:K-of-L1p-localization-for-ko-algs} is a useful statement only at height 2.
\end{rem}

We get a similar result for algebras over the connective spectrum of topological modular forms $\tmf$, see
\cite{TMF,Behrens} for introductions. Recall that $\tmf$ is by definition the connective cover of an $\E_{\infty}$-ring spectrum $\Tmf$ that arises as the global sections of a sheaf $\cO^{top}$ of $\E_{\infty}$-ring spectra on the \'etale site of the compactified moduli stack of elliptic curves $\overline{\cM}_{ell}$. The evaluation of $\cO^{top}$ on the uncompactified moduli stack $\cM_{ell}$ is the periodic spectrum $TMF$. Of the following corollary, the first statement was already proven in \cite{CMNN2}.

\begin{cor}\label{cor:tmf-algebras}
Let $A$ be a $\tmf$-algebra. 
\begin{enumerate}
	\item The spectrum $K(A)$ vanishes $T(n)$-locally for all $n\geq 4$. 
	\item The map $K(A) \to K(A\tensor_{\tmf}\Tmf)$ is a $T(n)$-equivalence for all $n\geq 2$. 	
	\item The map $K(A) \to K(A\tensor_{\tmf}TMF)$ is a $T(3)$-equivalence.
	\item At the prime $2$, there is a $T(3)$-local equivalence $K(\tmf) \simeq K(TMF) \simeq K(E_2)^{hGL_2(\F_3)}$, where $E_2$ denotes the Lubin--Tate spectrum for a supersingular elliptic curve over $\F_4$. Replacing $GL_2(\F_3)$ by the group of automorphisms over $\mathbb{F}_3$ of a supersingular elliptic curve over $\mathbb{F}_9$, the analogous statement holds at the prime $3$ as well.
\end{enumerate}
\end{cor}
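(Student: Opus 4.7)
Each of the four parts follows from \cref{puritythm:sec2} combined with a description of the cofibre of a natural map among $\tmf$, $\Tmf$, $TMF$, and Lubin--Tate spectra; part (iv) additionally uses the Galois descent theorem for $L_{T(3)}K(-)$ of Clausen--Mathew--Naumann--Noel.

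For (i), $\tmf$ is an $\E_\infty$-ring of chromatic height $2$, so $K(n)\otimes\tmf=0$ for $n\geq 3$ and hence $T(n)\otimes\tmf=0$ for $n\geq 3$ by \cref{telescope conjecture on ring spectra}; \cref{cor:T(n)-local-vanishing-of-K(A)-for-E-infty} applied with $n=3$ then yields $L_{T(m)}K(A)=0$ for $m\geq 4$. For (ii), the cofibre of $\tmf=\tau_{\geq 0}\Tmf\to\Tmf$ is $\tau_{<0}\Tmf$, which is bounded above and therefore $T(n)$-acyclic for all $n\geq 1$ by \cref{lem:basicproperties}(ii). Because $T(n)$-acyclic spectra form a tensor ideal and $A\otimes_\tmf(-)$ is a geometric realisation of bar complexes built from smash products, the cofibre of $A\to A\otimes_\tmf\Tmf$ is also $T(n)$-acyclic for $n\geq 1$. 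This map is therefore a $T(n-1)\oplus T(n)$-equivalence for every $n\geq 2$, and \cref{puritythm:sec2} supplies the claimed equivalence on $K$-theory.

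For (iii), by (ii) it suffices to produce a $T(3)$-equivalence $K(A\otimes_\tmf\Tmf)\to K(A\otimes_\tmf TMF)$. The fibre of $\Tmf\to TMF$ is the local cohomology of $\cO^{top}$ at the cuspidal locus of $\overline{\cM}_{ell}$; at the cusp, the universal generalised elliptic curve degenerates to the Tate curve, whose formal group $\hat{\G}_m$ has height $1$. This fibre therefore has chromatic support concentrated in heights $0$ and $1$, and in particular is $L_1^{p,f}$-local in the $p$-local world (using that the telescope conjecture is known in heights $0$ and $1$, so that $L_1^{p,f}=L_{E(1)}$). Since $L_1^{p,f}$ is smashing and $L_1^{p,f}\bbS\otimes T(n)=0$ for $n\geq 2$, any $L_1^{p,f}$-local spectrum is $T(n)$-acyclic for $n\geq 2$. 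Base-change along $\tmf\to A$ preserves this acyclicity by the same tensor-ideal argument as in (ii), so $A\otimes_\tmf\Tmf\to A\otimes_\tmf TMF$ is a $T(2)\oplus T(3)$-equivalence, and \cref{puritythm:sec2} concludes.

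For (iv), the first equivalence combines (ii) and (iii) applied to $A=\tmf$. For the second, we use at $p=2$ the classical $K(2)$-local identification $L_{K(2)}TMF\simeq E_2^{hGL_2(\F_3)}$ (see \cite{Behrens} and references therein), where $E_2$ is the Lubin--Tate spectrum attached to the unique supersingular elliptic curve over $\F_4$, whose $\F_4$-automorphism group is $GL_2(\F_3)$. Since $TMF$ has chromatic height $2$, the chromatic fracture square identifies the fibre of $TMF\to L_{K(2)}TMF$ with the fibre of the induced map of $L_1^{p,f}$-localisations; hence this fibre is $L_1^{p,f}$-local and therefore $T(n)$-acyclic for $n\geq 2$ by the previous paragraph. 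An application of \cref{puritythm:sec2} then yields the $T(3)$-local equivalence $K(TMF)\simeq K(E_2^{hGL_2(\F_3)})$. Finally, $E_2^{hGL_2(\F_3)}\to E_2$ is a finite $GL_2(\F_3)$-Galois extension of $\E_\infty$-ring spectra, so the Galois descent theorem for $L_{T(3)}K(-)$ produces the equivalence $L_{T(3)}K(E_2^{hGL_2(\F_3)})\simeq L_{T(3)}(K(E_2)^{hGL_2(\F_3)})$. The argument at $p=3$ is entirely analogous, with $GL_2(\F_3)$ replaced by the $\F_3$-automorphism group of a supersingular elliptic curve over $\F_9$. The principal technical obstacle lies in part (iv), where we invoke the non-trivial $K(2)$-local description of $TMF$ as a homotopy fixed-point spectrum of $E_2$ and the applicability of Galois descent for $T(3)$-local $K$-theory to this specific finite extension.
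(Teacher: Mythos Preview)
Your proof is correct and follows essentially the same strategy as the paper, with one noteworthy difference in part (iii).

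For part (iii), the paper argues directly that $\Tmf \to TMF$ is a $T(2)$-equivalence by working on affines: using that global sections of quasi-coherent $\cO^{top}$-modules preserve colimits (a result of Mathew--Meier), one reduces to showing that $T(2)\otimes\cO^{top}(\Spec A)\to T(2)\otimes\cO^{top}(\Spec A)[\Delta^{-1}]$ is an equivalence, which holds because all height-$2$ generalized elliptic curves are smooth, so inverting $v_2$ already inverts $\Delta$. Your route---asserting that the fibre of $\Tmf\to TMF$ has chromatic support in heights $\leq 1$ because the Tate curve has $\hat{\G}_m$ as formal group---reaches the same conclusion, but the assertion about the fibre is not entirely self-evident: the fibre is a local-cohomology-type object, not simply the value of $\cO^{top}$ at the cusp, and making your claim precise ultimately requires input equivalent to the paper's affine computation (or the observation that the fibre, being $L_2$-local and $K(2)$-acyclic, is $L_1$-local). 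Your invocation of the height-$1$ telescope conjecture is also unnecessary: $L_1$-local spectra are automatically $T(n)$-acyclic for $n\geq 2$ since $L_1$ is smashing and $V_n\otimes L_1\bbS=0$ for any type-$n$ complex with $n\geq 2$.

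Part (iv) matches the paper's argument closely; the paper phrases the passage from $TMF$ to $L_{K(2)}TMF$ via the general fact that $T(n)$- and $K(n)$-localization coincide on $L_n$-local spectra, which is a cleaner formulation than the chromatic fracture square but amounts to the same thing.
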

\begin{proof}
By \cite[Theorem 2.1]{Ravenel84}, the spectrum $BP[v_n^{-1}]$ has the same Bousfield class as $K(0)\oplus \cdots \oplus K(n)$ and is thus $L_n$-local. Thus, every $p$-local complex oriented ring spectrum whose formal group law has height at most $n$ is $L_n$-local. Evaluated on any affine, $\cO^{top}$ is even and hence complex orientable; moreover its formal group is isomorphic to that of the corresponding generalized elliptic curve and thus has height at most $2$ at any prime. We see that $\Tmf_{(p)}$ is, as a limit of $L_2$-local spectra, itself $L_2$-local and thus $T(n)$-acyclic for $n\geq 3$.  As $\tmf \to \Tmf$ is a $T(n)$-equivalence for all $n\geq 1$ by \cref{lem:basicproperties}, we can deduce moreover that $\tmf$ is $T(n)$-acyclic for all $n \geq 3$. Thus the first two statements follow from our main theorem. 
		
For the third statement, it suffices to show that $\Tmf \to TMF$ is a $T(n)$-equivalence for $n = 2$ (and hence all $n\geq 2$). As taking global sections of quasi-coherent $\cO^{top}$-modules preserves colimits by \cite{Mathew-Meier}, we have $T(2) \tensor Tmf \simeq \Gamma(T(2)\tensor \cO^{top}$); hence it suffices to show that $T(2)\tensor \cO^{top}(\Spec A) \to T(2)\tensor \cO^{top}(\Spec A)[\Delta^{-1}]$ is an equivalence for every \'etale affine $\Spec A \to \overline{\cM}_{ell}$, where $\Delta$ denotes the discriminant. As all generalized elliptic curves of height $2$ are actually smooth elliptic curves, inverting $v_2$ (as we do in $T(2)$) indeed inverts $\Delta$ as well.
		
Note that  $K(n)$-localization and $T(n)$-localization coincide on $L_n$-local spectra. Indeed, if $X$ is $L_{n}$-local, then the fibre of the map $X \to L_{K(n)}X$ is $L_{n}$-local and $K(n)$-acyclic, whence $L_{n-1}$-local and thus $T(n)$-acyclic. As $L_{K(n)}X$ is also $T(n)$-local, it follows that $L_{T(n)}X \simeq L_{K(n)}X$.
 Thus, $\tmf \to TMF \to L_{K(2)}TMF$ are $T(2)$-local equivalences and hence induce $T(3)$-equivalences in $K$-theory by our main theorem. The faithful $GL_2(\F_3)$-Galois extension $TMF_{(2)} \to TMF(3)_{(2)}$ from \cite[Theorem 7.6]{Mathew-Meier} localizes to the Galois extension $L_{K(2)}TMF \to L_{K(2)}TMF(3)\simeq E_2$ (cf.\ \cite[Proposition 6.6.10]{Behrens}, \cite[Proposition 3.6]{Heard--Mathew--Stojanoska}). Thus, the map $K(L_{K(2)}TMF) \to K(E_2)^{hGL_2(\F_3)}$ is an equivalence after an arbitrary telescopic localization by \cite[Theorems 5.6, Corollary B.4]{CMNN}. The statement for $p=3$ is proven analogously using that here $L_{K(2)}TMF \simeq E_2^{hG_{24}}$, where $E_2$ is the Lubin--Tate spectrum for a supersingular elliptic curve $C$ over $\F_9$ and $G_{24}$ is its group of automorphisms over $\mathbb{F}_3$.
\end{proof}

\begin{rem}
In \cite{Barwick-Lawson} Barwick and Lawson provide an analog of the Blumberg--Mandell localization sequence (see Remark~\ref{rem:Blumberg-Mandell-sequence}) for certain regular ring spectra. In particular, there is a localization sequence of connective $K$-theory spectra
\[
K^{\mathrm{cn}}(\Z) \lto K^{\mathrm{cn}}(\tmf) \lto K^{\mathrm{cn}}(\Tmf),
\]
which implies the second part of the previous corollary for $A = \tmf$ and certain other regular $\tmf$-algebras.
We also remark that in \cite{AGHeller} these localization sequences are extended to non-connective $K$-theory spectra. However, for the present application this is irrelevant as the difference vanishes after telescopic localization.
\end{rem}

\subsection{\texorpdfstring{Consequences for \textit{K}(1)-local \textit{K}-theory}
{Consequences for K(1)-local K-theory}}

\label{subsec:K1-local-consequences}

We now record the consequences of 
the Purity Theorem at height $1$ (or, equivalently, 
\Cref{thm-A}). 
Recall also that $K(1)$ and $T(1)$-localization coincide.

\begin{cor}
	\label{cor:K1local-truncating} 
$K(1)$-local $K$-theory is truncating on $K(1)$-acyclic ring spectra.
In fact, for a $K(1)$-acyclic ring spectrum $A$, we have 
$L_{K(1)} K(A) = L_{K(1)} K(A[\tfrac{1}{p}])$. 
\end{cor}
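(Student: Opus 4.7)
The plan is first to replace everything with $T(1)$-local analogues, then to use \Cref{purity1} to pass from $A$ to $A[\tfrac{1}{p}]$, and finally to invoke Waldhausen's classical truncating property for $p$-adic $K$-theory of $\bbS[\tfrac{1}{p}]$-algebras. The height-one telescope conjecture (noted after \Cref{lem:basicproperties}) identifies $L_{K(1)}$ with $L_{T(1)}$ on all spectra, and \Cref{telescope conjecture on ring spectra} shows that a ring spectrum is $K(1)$-acyclic if and only if it is $T(1)$-acyclic. So it is equivalent to prove the $T(1)$-local versions of both claims.

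For the ``in fact'' clause I would take a $T(1)$-acyclic ring spectrum $A$ and apply \Cref{purity1} with $n=1$, giving $L_{T(1)} K(A) \simeq L_{T(1)} K(L_1^{p,f} A)$. Since $A$ is $T(1)$-acyclic, observation (2) following \Cref{localization-lemma} identifies $L_1^{p,f} A$ with $A[\tfrac{1}{p}]$, and the claim follows.

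For the truncating statement, given a connective $T(1)$-acyclic ring spectrum $R$, I would note that $\pi_0 R = \tau_{\leq 0} R$ is again $T(1)$-acyclic (as recorded in the paragraph before \Cref{ktruncatingimpliestruncating}). Applying the ``in fact'' clause to both $R$ and $\pi_0 R$ reduces the problem to showing that
\[ K(R[\tfrac{1}{p}]) \lto K((\pi_0 R)[\tfrac{1}{p}]) \]
is a $T(1)$-equivalence. But $R[\tfrac{1}{p}]$ is a connective $\bbS[\tfrac{1}{p}]$-algebra with $\pi_0 = (\pi_0 R)[\tfrac{1}{p}]$, so \cite[Lemma 2.4]{LT}, which asserts that $p$-adic $K$-theory is truncating on $\bbS[\tfrac{1}{p}]$-algebras, makes the above map a $p$-adic equivalence, and hence a $T(1)$-equivalence by \Cref{lem:basicproperties}(vi).

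The only substantive point is widening the truncating class from the $L_1^{p,f}$-acyclic ring spectra treated in \Cref{TnKistruncatingT0Tnacyclic} to the larger class of $T(1)$-acyclic ring spectra. Invoking \Cref{purity1} to split off the $T(0) = \bbS[\tfrac{1}{p}]$-direction does exactly this, and Waldhausen's truncating property then absorbs the remaining $p$-inverted part.
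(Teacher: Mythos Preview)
Your proof is correct and follows essentially the same route as the paper's: apply \Cref{purity1} with $n=1$ to a $T(1)$-acyclic $A$ (so that $L_1^{p,f}A = A[\tfrac{1}{p}]$), and then invoke Waldhausen's result that $p$-adic $K$-theory is truncating on $\bbS/p$-acyclic ring spectra. The paper's proof is simply a terser version of yours, taking the identification $L_{K(1)} = L_{T(1)}$ for granted (it is recorded just before the corollary) and compressing your last two paragraphs into the single sentence ``The claim follows from this as $p$-adic $K$-theory is truncating on $\bbS/p$-acyclic ring spectra.''
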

\begin{proof}
Let $A$ be a $K(1)$-acyclic, connective ring spectrum. By \cref{purity1} we have an equivalence $L_{K(1)}K(A) \simeq L_{K(1)}K(A\adj)$. The claim follows from this as $p$-adic $K$-theory is truncating on $\bbS/p$-acyclic ring spectra by a result of Waldhausen (see also \cite[Lemma~2.4]{LT}).
\end{proof}

In the case of $H\mathbb{Z}$-algebras, the last assertion of 
\Cref{compKTC}
also appears in \cite{BCM}, proved by different methods. 
From \cite[Theorems~3.3, A.2]{LT} we then get the following. Note that discrete rings are $K(1)$-acyclic.
\begin{cor}
	\label{K1localKcdh}
$K(1)$-local $K$-theory of discrete rings is nilinvariant and satisfies Milnor excision and cdh-descent.
\end{cor}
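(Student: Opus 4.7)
The plan is to reduce \Cref{K1localKcdh} directly to the previous corollary via the general formalism of truncating invariants developed in \cite{LT}, using that discrete rings are $K(1)$-acyclic when viewed as ring spectra.

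The first step I would carry out is to justify this $K(1)$-acyclicity. For any discrete ring $R$, the Eilenberg--MacLane spectrum $HR$ is bounded, and so by \Cref{lem:basicproperties}(ii) the natural map $\tau_{\geq k} HR \to HR$ is a $v_1$-periodic equivalence for every $k$; taking $k$ sufficiently large gives $\tau_{\geq k} HR = 0$, so the $v_1$-periodic homotopy groups of $HR$ vanish. By \Cref{lem:basicproperties}(viii) this means $HR$ is $T(1)$-acyclic, and by \Cref{telescope conjecture on ring spectra} (or alternatively by the classical Mahowald--Miller resolution of the telescope conjecture at height one) $HR$ is also $K(1)$-acyclic.

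The second step is then immediate: \Cref{cor:K1local-truncating} applies to $HR$ for every discrete ring $R$, so $L_{K(1)}K(-)$ restricts to a truncating invariant on the category of discrete rings. The conclusions of \Cref{K1localKcdh} — nilinvariance, Milnor excision, and cdh-descent — then follow by quoting \cite[Theorems~3.3, A.2]{LT}, which establish precisely these three properties as formal consequences of being truncating (the first two on arbitrary associative rings, the last in the commutative/scheme-theoretic setting where cdh-descent is formulated).

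The ``main obstacle'' here is really none at all: the hard work has been absorbed into \Cref{purity1}, which produced \Cref{cor:K1local-truncating}, and into the Land--Tamme machinery. The only subtlety worth flagging is checking that the $K(1)$-acyclicity of $HR$ survives at the ring-spectrum level (rather than merely for its underlying spectrum), which is automatic since the Bousfield localization $L_{K(1)}$ is monoidal and $L_{K(1)}(HR)=0$.
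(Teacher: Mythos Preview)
Your proposal is correct and follows essentially the same route as the paper: discrete rings are $K(1)$-acyclic, so \Cref{cor:K1local-truncating} applies, and then \cite[Theorems~3.3, A.2]{LT} deliver nilinvariance, Milnor excision, and cdh-descent. One small remark: the implication $T(1)$-acyclic $\Rightarrow$ $K(1)$-acyclic is the easy direction (\Cref{lem:basicproperties}(iv)), so invoking \Cref{telescope conjecture on ring spectra} or Mahowald--Miller at that step is unnecessary.
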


We also get the following consequence. 
\begin{cor}
	\label{K1localKhomotopy}
Let $A$ be a connective and $K(1)$-acyclic ring spectrum. Then the canonical map $L_{K(1)}K(A) \to L_{K(1)}K(A[x])$ is an equivalence. In other words, $K(1)$-local $K$-theory is homotopy invariant on connective, $K(1)$-acyclic ring spectra.
\end{cor}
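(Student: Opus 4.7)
The plan is to first reduce the statement to its analog for discrete rings via the truncating property of $K(1)$-local $K$-theory, and then to deduce the discrete case by comparing with Weibel's homotopy $K$-theory.

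For the reduction, first observe that $A[x]\simeq A \otimes \bbS[x]$ is connective and $K(1)$-acyclic: indeed, $A \otimes \bbS[x] \simeq \bigoplus_{n \geq 0} A$ as spectra, so $K(1) \otimes A[x] \simeq \bigoplus_{n \geq 0} K(1) \otimes A = 0$. Applying \cref{cor:K1local-truncating}, the truncating property of $L_{K(1)}K$ on connective $K(1)$-acyclic ring spectra, to both $A$ and $A[x]$, and using $\pi_0(A[x]) = (\pi_0 A)[x]$, we obtain equivalences
\[ L_{K(1)} K(A) \simeq L_{K(1)} K(\pi_0 A) \quad\text{and}\quad L_{K(1)} K(A[x]) \simeq L_{K(1)} K((\pi_0 A)[x]). \]
It therefore suffices to show that for any discrete ring $R$ (in particular, for $R = \pi_0 A$), the canonical map $L_{K(1)} K(R) \to L_{K(1)} K(R[x])$ is an equivalence.

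To handle the discrete case, I would compare with Weibel's homotopy $K$-theory $KH$, which is $\mathbb{A}^1$-homotopy invariant by construction. By \cref{K1localKcdh}, $L_{K(1)} K$ satisfies cdh-descent and nilinvariance on discrete rings, and by the theorem of Cisinski (and Kerz--Strunk--Tamme), $KH$ is the cdh-sheafification of $K$ on Noetherian discrete rings. Combining these, the comparison map $L_{K(1)} K(R) \to L_{K(1)} KH(R)$ is an equivalence for every Noetherian discrete $R$, and extends to arbitrary discrete rings by writing $R$ as a filtered colimit of its finitely generated subrings (using that both sides commute with filtered colimits). Consequently,
\[ L_{K(1)} K(R) \simeq L_{K(1)} KH(R) \simeq L_{K(1)} KH(R[x]) \simeq L_{K(1)} K(R[x]), \]
where the middle equivalence uses the $\mathbb{A}^1$-homotopy invariance of $KH$.

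The main technical step is the identification $L_{K(1)} K \simeq L_{K(1)} KH$ on arbitrary discrete rings; the reduction from the ring spectrum setting to the discrete one, on the other hand, is entirely formal from the truncating property established earlier in the paper. An alternative route would avoid $KH$ entirely and instead try to extract $\mathbb{A}^1$-invariance directly from cdh-descent, nilinvariance, and Milnor excision together with a Mayer--Vietoris argument on the Zariski cover $\mathbb{P}^1_R = \mathbb{A}^1_R \cup \mathbb{A}^1_R$ — but passing through $KH$ via the cdh-sheafification theorem is the cleanest path.
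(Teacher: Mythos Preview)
Your reduction to discrete rings via \cref{cor:K1local-truncating} is correct and matches the paper exactly. The gap is in the discrete case.

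You claim that cdh-descent for $L_{K(1)}K$ together with the identification of $\KH$ as the cdh-sheafification of $K$ yields $L_{K(1)}K(R)\simeq L_{K(1)}\KH(R)$. This does not follow formally. From cdh-descent one obtains a factorization $K\to\KH\to L_{K(1)}K$, hence that $L_{K(1)}K$ is a retract of $L_{K(1)}\KH$, but not the reverse. One way to close the gap: the cdh-topology on Noetherian schemes is generated by a cd-structure, so the sheaf condition is a finite-limit condition and is therefore preserved by $L_{K(1)}$; thus $L_{K(1)}\KH$ is again a cdh-sheaf, and a universal-property argument then identifies the two. But none of this is in your write-up, and it pulls in more machinery than the problem requires. (Your filtered-colimit step also needs care: $L_{K(1)}$ does not commute with filtered colimits, so the correct reduction is to note that $K(1)$-acyclicity of $NK$ is preserved under filtered colimits of rings.)

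The paper's route in the discrete case is shorter and avoids $\KH$ entirely. After reducing to discrete $A$, it uses \cref{main-theorem} once more to replace $A$ by $A\adj$, and then invokes Weibel's result that if $p\in A^{\times}$ then $p$ acts invertibly on $NK(A)=\fib(K(A[x])\to K(A))$; hence $NK(A)$ vanishes $p$-adically and in particular $K(1)$-locally. The identification $L_{K(1)}K\simeq L_{K(1)}\KH$ that you want to use as input is then recorded, in the remark following the corollary, as a \emph{consequence} of this result rather than as an ingredient in its proof.
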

Here, for any ring spectrum $A$, the symbol $A[x]$ denotes the ring spectrum $A \otimes \Sigma^{\infty}_{+}\Z_{\geq 0}$.
\begin{proof}
We observe that $A[x] = A \otimes \bbS[x]$ is also $K(1)$-acyclic and connective. Hence, by \cref{cor:K1local-truncating} we may assume that $A$ is discrete so that $A[x]$ is the usual discrete polynomial ring $A\otimes_\Z \Z[x]$. By the same corollary we may furthermore assume that $p$ is invertible in $A$. In this case, Weibel \cite{Weibel2} has shown that $p$ is also invertible on $NK(A) = \fib(K(A[x]) \to K(A))$. So the $p$-completion of $NK(A)$ vanishes, and hence $L_{K(1)}NK(A) = 0$ as well.
\end{proof}

\begin{rem}
For ring spectra, there are two canonical ``affine lines:'' The flat affine line $A[x]$ as above, and the smooth affine line $A \otimes \bbS\{x\}$, where $\bbS\{x\}$ is the free $\E_\infty$-ring on a degree $0$ generator. Since the canonical map $\bbS\{x\} \to \bbS[x]$ is a $\pi_0$ isomorphism, we also obtain homotopy invariance with respect to the smooth affine line on connective, $K(1)$-acyclic ring spectra $A$: Both maps $K(A) \to K(A\{x\})$ and $K(A\{x\}) \to K(A[x])$ are $K(1)$-local equivalences.
\end{rem}

\begin{rem}
Recall that Weibel's homotopy $K$-theory $KH(A)$ of a discrete ring $A$ is defined as the geometric realization of the simplicial spectrum $K(A[\Delta^{\bullet}])$ with 
\[A[\Delta^{n}] = A[x_{0}, \dots, x_{n}]/(x_{0}+\dots+x_{n}-1) \cong A[x_{1}, \dots, x_{n}].\]
 It follow from the above corollary that
\[
L_{K(1)}K(A) \simeq L_{K(1)}KH(A).
\]
By results of Weibel and Cisinski homotopy $K$-theory satisfies Milnor excision \cite{Weibel2} and cdh-descent \cite{Cisinski}. In this way we obtain another proof of Corollary~\ref{K1localKcdh}.
\end{rem}

\begin{rem}\label{TC-version-of-question}
The analog of Corollary~\ref{cor:K1local-truncating} for topological cyclic homology does not hold:
As $\THH(\Z[\frac{1}{p}])$ is a $\Z[\frac{1}{p}]$-algebra, it vanishes $p$-adically. So $\TC(\Z[\frac{1}{p}])$ vanishes $p$-adically, and a fortiori after $T(1)$-localization. However, $L_{T(1)}\TC(\Z)$ does not vanish: For odd primes $p$,
 B\"okstedt and Madsen \cite{MR1317117} computed the connective cover of $\TC(\Z)_{p}^{\swedge} \simeq \TC(\Z_{p})_{p}^{\swedge}$ to be equivalent to $j \oplus \Sigma j \oplus \Sigma^{3} ku_{p}^{\swedge}$ where $j$ is the connective cover of the $K(1)$-local sphere. In particular, the $T(1)$- or equivalently $K(1)$-localization of $\TC(\Z)$ is given by
\begin{equation}\label{eq:TCZ}
L_{K(1)} \TC(\Z) \simeq L_{K(1)}\bbS \oplus \Sigma L_{K(1)} \bbS \oplus \Sigma^{3} KU_{p}^{\swedge} \not= 0.
\end{equation}
For $p=2$, \cite[Theorem 0.5]{Rognes} and \cite[Formula (0.2)]{Rognes2} give a filtration of $L_{K(1)}\TC(\Z)$, whose associated graded essentially looks like the summands in \eqref{eq:TCZ}. Using that $KU_{p}^{\swedge}$ is rationally non-trivial in infinitely many degrees, while the other two terms are rationally non-trivial only in finitely many degrees, one obtains that $L_{K(1)}\TC(\Z)$ is non-trivial at $p=2$ as well.
\end{rem}

\begin{rem} 
We point out that, although $K(1)$-local $\TC$ commutes with filtered colimits of rings \cite[Theorem G]{CMM}, it does not commute with filtered colimits of categories. 
In fact, one checks that (cf.~also \cite[Proposition~2.15]{BCM})
the filtered colimit $\colim\limits_k \Mod_{\mathbb{Z}/p^k}( \Perf(\mathbb{Z}))$
is the $\infty$-category of $p$-power torsion perfect $\mathbb{Z}$-modules and we thus obtain an exact sequence 
\[ \colim\limits_k \Mod_{\Z/(p^k)}(\Perf(\Z)) \lto \Perf(\Z) \lto \Perf(\Z\adj).\]
Assuming that $L_{K(1)}\TC$ commutes with filtered colimits we find that
$L_{K(1)}\TC$ of the fibre vanishes, as
$L_{K(1)}\TC(\Mod_{\Z/(p^k)}(\Perf(\Z)))$ is a module over
$L_{K(1)}\TC(\Z/(p^k))$ which vanishes since $L_{K(1)} K(\mathbb{Z}/p^k) = 0$ as
above;
this is a contradiction.
\end{rem}

\subsection{Consequences for $T(n)$-local $K$-theory for $n\geq 2$}
\label{subsec:KTC}

In this subsection, we record some further structural 
consequences of 
the Purity Theorem 
at heights $\geq 2$. 
Some further structural features in this context are
also explored in \cite{CMNN2}. 

\begin{cor} 
\label{compKTC}
Let $n \geq 2$. Then for any ring spectrum $A$, we have 
a natural equivalence
$L_{T(n)} K(A) = L_{T(n)} K( \tau_{\geq 0} A )  =  L_{T(n)} \mathrm{TC}( \tau_{\geq 0} A)$. 
\end{cor}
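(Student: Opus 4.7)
The plan is to establish the two claimed equivalences in turn.

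The first equivalence, $L_{T(n)} K(A) \simeq L_{T(n)} K(\tau_{\geq 0} A)$, is a direct consequence of \cref{purity1}. The canonical map $\tau_{\geq 0} A \to A$ is a $T(i)$-equivalence for every $i \geq 1$ by \cref{lem:basicproperties}(ii), and in particular a $T(1) \oplus \cdots \oplus T(n)$-equivalence. Since $n \geq 2$, applying the second half of \cref{purity1} to both $A$ and $\tau_{\geq 0} A$ and using naturality identifies $L_{T(n)} K(\tau_{\geq 0} A)$ and $L_{T(n)} K(A)$ canonically with $L_{T(n)} K(L_{T(1) \oplus \cdots \oplus T(n)} A)$.

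For the second equivalence, $L_{T(n)} K(\tau_{\geq 0} A) \simeq L_{T(n)} \TC(\tau_{\geq 0} A)$, I will apply the Dundas--Goodwillie--McCarthy theorem \cite[Theorem VII.0.0.2]{DGM} to the connective ring spectrum $\tau_{\geq 0} A$. This yields a pullback square of spectra
\[
\begin{tikzcd}
K(\tau_{\geq 0} A) \ar[r] \ar[d] & \TC(\tau_{\geq 0} A) \ar[d] \\
K(\pi_0 A) \ar[r] & \TC(\pi_0 A)
\end{tikzcd}
\]
which remains a pullback after $T(n)$-localization. It thus suffices to show that the bottom row becomes a $T(n)$-equivalence for $n \geq 2$. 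The lower-left term vanishes $T(n)$-locally by Mitchell's theorem \cite{Mitchell} (alternatively by the first half of \cref{purity1} applied to the $T(i)$-acyclic discrete ring $\pi_0 A$ for $i \geq 1$), so the task reduces to proving $L_{T(n)} \TC(\pi_0 A) = 0$ for $n \geq 2$.

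The main obstacle is precisely this final vanishing, since the purity statements proved so far are inputs about $K$-theory rather than $\TC$. To handle it, I would use \cref{lem:basicproperties}(vi) to reduce to the $T(n)$-acyclicity of $\TC(\pi_0 A)^\swedge_p$ and then exploit the motivic filtration on $p$-adic $\TC$ of a discrete ring from \cite{BMS2}, whose associated graded pieces are shifts of $H\Z_p$-modules. Since $T(n) \otimes H\F_p = 0$, any such module is $T(n)$-acyclic for all $n \geq 1$; a completeness argument for the filtration (using the rapidly increasing connectivity of the graded pieces) then transfers the acyclicity from the associated graded to $\TC(\pi_0 A)^\swedge_p$ itself, finishing the proof.
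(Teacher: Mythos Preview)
Your overall structure matches the paper's exactly: \cref{purity1} for the first equivalence, and the Dundas--Goodwillie--McCarthy square for the second. You also correctly isolate the one remaining step as the vanishing $L_{T(n)}\TC(\pi_0 A) = 0$ for $n\geq 2$.

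The gap is in how you propose to establish that vanishing. The motivic filtration of \cite{BMS2} is constructed for quasisyntomic rings, and in particular for \emph{commutative} rings; here $\pi_0 A$ is an arbitrary associative ring, which need be neither commutative nor (when it is) quasisyntomic. So the filtration argument as written does not apply, and the completeness step you sketch would in any case need care since the relevant filtrations on $\TC^{-}$ and $\mathrm{TP}$ are not bounded below.

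The paper's terse ``Dundas--Goodwillie--McCarthy combined with Mitchell'' encodes a much simpler argument that handles the general case at once. Since $\Perf(\pi_0 A)$ is $\Perf(\Z)$-linear and both $K$ and $\TC$ are lax symmetric monoidal localizing invariants, $K(\pi_0 A)$ is a $K(\Z)$-module and $\TC(\pi_0 A)$ is a $\TC(\Z)$-module; via the ring map $K(\Z)\to \TC(\Z)$ given by the cyclotomic trace, $\TC(\pi_0 A)$ is therefore also a $K(\Z)$-module. Mitchell's theorem says $K(\Z)$ is $T(n)$-acyclic for $n\geq 2$, so both $K(\pi_0 A)$ and $\TC(\pi_0 A)$---and hence the bottom row of your DGM square---vanish $T(n)$-locally. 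No input about $\TC$ of specific rings (and certainly no motivic filtration) is needed beyond this module structure.
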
 
\begin{proof} 
Indeed, this follows because $\tau_{\geq 0} A \to A$ 
induces an equivalence on $L_{T(n)} K(-)$ by \Cref{purity1}. 
Now we use the Dundas--Goodwillie--McCarthy theorem \cite{DGM} combined with Mitchell's
theorem \cite{Mitchell} to obtain $L_{T(n)} K( \tau_{\geq 0} A) \xrightarrow{\sim} L_{T(n)}
(\mathrm{TC}(\tau_{\geq 0}  A ))$, hence the result. 
\end{proof} 

For a $T(n)$-acyclic ring spectrum (with $n\geq 2$), we therefore obtain an equivalence
\[ L_{T(n)} K(L_{T(n-1)}A) \simeq L_{T(n)} \TC(\tau_{\geq 0}A).\]
This should be compared to the result obtained in \cite{BCM} that if $A$ is a
commutative, $p$-adically complete ring, then 
\[ L_{T(1)} K(A[\tfrac{1}{p}]) \simeq L_{T(1)} \mathrm{TC}(A).  \]
Note by contrast that no commutativity or completeness at $(p,v_1,\dots,v_{n-1})$ is required in \Cref{compKTC}. 

Next, we record a result describing the behavior of group-complete $K$-theory
versus 
Waldhausen $K$-theory; this is essentially a restatement of the above in
categorical terms, and informally states that 
for $T(i)$-local phenomena with $i \geq 2$, it suffices simply to 
group-complete (rather than split all cofibre sequences) in the definition of
$K$-theory. 
 Given an additive $\infty$-category $\mathcal{A}$, we let
$K^{\mathrm{add}}(\mathcal{A})$ 
denote the group-completion $K$-theory of $\mathcal{A}$, which we regard as a
connective spectrum. For another proof of this result,
cf.~\Cref{remark:tstructure}. 

\begin{cor} 
\label{additivevsexactK}
Let $\mathcal{C}$ be a stable $\infty$-category, and let $\mathcal{A} \subset
\mathcal{C}$ be an  additive subcategory. Suppose $\mathcal{A}$ generates
$\mathcal{C}$ as a thick subcategory. 
Then the natural map $K^{\mathrm{add}}(\mathcal{A}) \to K(\mathcal{C})$ induces
an equivalence on $T(i)$-localization, for $i \geq 2$. 
\end{cor}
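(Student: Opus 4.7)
The plan is to reduce the statement to one about ring spectra and then apply \Cref{compKTC}.

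First, I would use that both $K^{\mathrm{add}}$ and $K$ commute with filtered colimits, together with the fact that any additive $\infty$-category is the filtered colimit of its additive subcategories generated under finite biproducts and retracts by finitely many objects. Correspondingly, $\cC$ is the filtered colimit of the thick subcategories generated by these finite subcollections. By passing to a direct sum of generators, I can reduce to the case in which $\cA$ is additively generated by a single object $X$ and $\cC$ equals the thick subcategory of itself generated by $X$.

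Next, as in the first paragraph of the proof of \Cref{vanishTilocal}, I would apply the Schwede--Shipley theorem in its additive and stable forms. This identifies $\cA \simeq \mathrm{Proj}^\omega(R)$ with $R = \hom_{\cA}(X,X)$ a connective ring spectrum, giving $K^{\mathrm{add}}(\cA) \simeq \tau_{\geq 0} K(R)$, and $\cC \simeq \Perf(R')$ with $R' = \Hom_{\cC}(X,X)$, giving $K(\cC) \simeq K(R')$. Since $\cC$ is stable, $\hom_{\cC}(X,X)$ is the connective cover of $\Hom_{\cC}(X,X)$, hence $R = \tau_{\geq 0} R'$. The natural map in question then factors as
\[
\tau_{\geq 0} K(R) \lto K(R) \lto K(R').
\]

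It then remains to show that each factor becomes an equivalence after $T(i)$-localization for $i \geq 2$. For the first map, the fibre is bounded above, and hence is $T(i)$-acyclic for all $i \geq 1$ by \Cref{lem:basicproperties}. The second map is induced by the connective cover $\tau_{\geq 0} R' \to R'$, and it becomes an equivalence after $T(i)$-localization for $i \geq 2$ directly by \Cref{compKTC}.

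The essential input is \Cref{compKTC}, which in turn relies on \Cref{purity1} and the Dundas--Goodwillie--McCarthy theorem; everything else is formal. The only place I expect to have to think is in verifying that the additive and stable Schwede--Shipley identifications are genuinely compatible, i.e.\ that the connective ring spectrum extracted on the $\cA$-side is precisely the connective cover of the one extracted on the $\cC$-side, which is what permits us to plug into \Cref{compKTC}.
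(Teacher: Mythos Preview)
Your proposal is correct and follows essentially the same approach as the paper: reduce via filtered colimits to a single generator $X$, identify the two sides with $K$-theory of $\tau_{\geq 0}\End_{\cC}(X)$ and $\End_{\cC}(X)$ via Schwede--Shipley, and invoke \Cref{compKTC}. The only cosmetic difference is that the paper closes under coproducts (not retracts) and therefore passes to $\tau_{\geq 1}$ on both sides to avoid the discrepancy in $K_0$, whereas you idempotent-complete and work with $\tau_{\geq 0}$; both handle the same issue.
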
 
\begin{proof} 
By passage to filtered colimits, we can assume that $\mathcal{A}$ is generated
under coproducts by a single object $X$ (and hence $\mathcal{C}$ is
generated as a thick subcategory by $X$). 
In particular, we have an equivalence $\tau_{\geq
1}K^{\mathrm{add}}(\mathcal{A})
\simeq \tau_{\geq 1}
K( \tau_{\geq 0} \mathrm{End}_{\mathcal{C}}(X))$, while $K(\mathcal{C}) = K(
\mathrm{End}_{\mathcal{C}}(X))$. 
The result then follows from \Cref{compKTC}. 
\end{proof} 

\begin{cor}
Let $n \geq 2$. 
The construction $A \mapsto L_{T(n)} K(A)$, from ring spectra to $T(n)$-local spectra, preserves sifted colimits. 
The same holds if we restrict to the subcategory of $T(n-1)$-local ring spectra. 
\end{cor}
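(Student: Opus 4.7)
The plan is to reduce the statement to sifted colimits of connective ring spectra, where one may invoke the standard fact that connective algebraic $K$-theory preserves sifted colimits, and then transfer back using \Cref{puritythm:sec2}.

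Let $A_{\bullet}\colon I \to \Alg_{\E_1}(\Sp)$ be a sifted diagram with colimit $A$, and let $B$ denote the sifted colimit of the diagram $\tau_{\geq 0} A_{\bullet}$; since sifted colimits of connective spectra are connective, $B$ may be computed equivalently in connective $\E_1$-rings or in all $\E_1$-rings. First I would show that the natural map $B \to A$ is a $T(i)$-equivalence for every $i \geq 1$. To this end, realizing the levelwise cofibre sequence $\tau_{\geq 0} A_{\bullet} \to A_{\bullet} \to \tau_{< 0} A_{\bullet}$ yields a cofibre sequence of spectra
\[ B \lto A \lto |\tau_{< 0} A_{\bullet}|. \]
Each $\tau_{<0} A_p$ is bounded above and hence $T(i)$-acyclic for $i \geq 1$ by \Cref{lem:basicproperties}(ii); as $T(i)$-acyclic spectra are closed under colimits, the realization $|\tau_{< 0} A_{\bullet}|$ is $T(i)$-acyclic as well. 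For $n \geq 2$ this makes $B \to A$ a $T(n-1) \oplus T(n)$-equivalence between ring spectra, so \Cref{puritythm:sec2} gives an equivalence $L_{T(n)} K(B) \simeq L_{T(n)} K(A)$.

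To conclude the first claim, I would use the standard fact that $K^{\cn}$ preserves sifted colimits of connective $\E_1$-rings. Combining this with the observations that $L_{T(n)}$ commutes with colimits, that $L_{T(n)} K(R) \simeq L_{T(n)} K^{\cn}(R)$ for connective $R$ and $n \geq 1$ (by \Cref{lem:basicproperties}(ii) applied to $\tau_{<0} K(R)$), and that $L_{T(n)} K(R) \simeq L_{T(n)} K(\tau_{\geq 0} R)$ for any ring spectrum $R$ and $n \geq 2$ by \Cref{compKTC}, we obtain
\[ L_{T(n)} K(A) \simeq L_{T(n)} K(B) \simeq \colim_I L_{T(n)} K(A_{\bullet}), \]
proving the first claim. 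For the second, note that $L_{T(n-1)}$ is smashing, so sifted colimits of $T(n-1)$-local spectra remain $T(n-1)$-local; hence sifted colimits of $T(n-1)$-local ring spectra agree with those computed in all ring spectra, and the statement reduces to the first.

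The main obstacle is the input that $K^{\cn}$ preserves sifted colimits of connective $\E_1$-rings. Although standard, verifying this is not entirely formal; one approach goes via the description $K^{\cn}(A) \simeq K^{\mathrm{add}}(\mathrm{Proj}^{\omega}(A))$, noting that $A \mapsto \mathrm{Proj}^{\omega}(A)$ preserves sifted colimits into additive idempotent-complete $\infty$-categories and that the group-completion $K$-theory functor $K^{\mathrm{add}}$ is a left adjoint, hence colimit-preserving.
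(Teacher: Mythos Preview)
Your reduction to the connective case via $B = \colim_I \tau_{\geq 0} A_\bullet$ is correct and matches the paper, but there are two genuine gaps after that point.

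The claimed ``standard fact'' that $K^{\cn}$ preserves sifted colimits of connective $\E_1$-rings is false. Consider the two-sided bar construction $B_\bullet$ computing the derived pushout $\Z \otimes^{L}_{\Z[x]} \Z$ for the two $\Z[x]$-algebra structures $x \mapsto 0$ and $x \mapsto p$ on $\Z$: this is a simplicial commutative ring with $B_m \cong \Z[x_1,\dots,x_m]$ and with geometric realization $H\Z/p$ (the derived tensor product is discrete since $x-p$ is a nonzerodivisor). Every face and degeneracy map is a $\Z$-algebra homomorphism between polynomial rings over $\Z$, so by $\A^1$-invariance of $K$-theory for regular rings each $K(B_m) \simeq K(\Z)$ and all structure maps induce the identity; hence $|K(B_\bullet)| \simeq K(\Z)$, while $K(|B_\bullet|) = K(\Z/p)$, and these disagree already on $\pi_1$. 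Your sketched justification via $\mathrm{Proj}^\omega$ breaks at the same point. The paper instead passes through $\TC$ via \Cref{compKTC} and uses that $R \mapsto \TC(R)/p$ preserves sifted colimits of connective ring spectra \cite[Corollary~2.15]{CMM}; this nontrivial input from trace methods is precisely what substitutes for the unavailable sifted-colimit preservation of $K$.

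For the second claim, $L_{T(n-1)}$ is not smashing for $n \geq 2$ (already $L_{T(1)} = L_{K(1)}$ involves $p$-completion), so $T(n-1)$-local spectra are not closed under colimits in $\Sp$ and your reduction does not go through. The paper instead observes that the comparison map from the ambient colimit $\colim_I A_i$ to its $T(n-1)$-localization is a $T(n-1)\oplus T(n)$-equivalence --- both sides being $T(n)$-acyclic, since $T(n-1)$-local spectra are $T(n)$-acyclic and acyclics are closed under colimits --- and then invokes \Cref{main-theorem}.
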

\begin{proof} 
We use here that the construction 
$R \mapsto \mathrm{TC}(R)/p$, from connective ring spectra to spectra,
preserves sifted colimits, cf.~\cite[Corollary~2.15]{CMM}. 
We prove the first claim that $A \mapsto L_{T(n)} K(A)$ preserves sifted
colimits as $A$ ranges over all ring spectra. 
Let $A_i, i \in I$ be a sifted diagram of ring spectra. 
Then $\tau_{\geq 0} A_i, i \in I$ yields a sifted diagram of connective
ring spectra, and using \cref{lem:basicproperties} we find that 
\[ 
L_{T(n)}\big(
\colim_{i \in I} \mathrm{TC}({\tau_{\geq 0}}A_i) \big)
\xrightarrow{\sim} 
L_{T(n)} \mathrm{TC}( \colim_{i\in I} \tau_{\geq 0} A_i)  . \]
Using that $\colim_{i} \tau_{\geq 0} A_i \to \tau_{\geq 0}
(\colim_i A_i)$ is a $T(1) \oplus \dots \oplus T(n)$-equivalence and hence induces an
equivalence on $L_{T(n)} K(-)$ thanks to \Cref{purity1}, 
we conclude the result from \Cref{compKTC}.

Finally, suppose $A_i$ is  a sifted diagram of $T(n-1)$-local ring spectra. 
Then the map 
$\mathrm{colim}_{i }A_i \to L_{T(n-1)}( \mathrm{colim} A_i)$ is a $T(n-1) \oplus
T(n)$-equivalence (as $T(n-1)$-local spectra are $T(n)$-acyclic), hence the last claim follows by what has already been
proved and the Purity Theorem.
\end{proof}

Finally, we record the $T(n)$-local (for $n \geq 2$) analog of the Farrell--Jones
conjecture; the following has been also
observed by Marco Varisco for connective ring spectra. We refer to the surveys \cite{RV, Lueck} for an introduction to this conjecture and its applications.
This will rely on the following result about the 
assembly map in $p$-adically completed topological cyclic homology, which
follows by combining  a result
of L\"uck--Reich--Rognes--Varisco \cite{LRRV} and finiteness properties of
$\mathrm{TC}$ from \cite{CMM}. 
By contrast, the assembly map from (non-$p$-completed)
$p$-typical $\mathrm{TC}$ need not be an equivalence for the family of cyclic
subgroups, cf.~\cite[Sec.~6]{LRRV}.

\begin{prop} 
\label{TCpassembly}
Let $R$ be any connective ring spectrum, and let $G$ be any group. 
Let $\mathcal{O}_{\mathscr{Cyc}}(G)$ be the subcategory of the orbit category of
$G$ spanned by $G$-sets of the form $G/H$, with $H \subset G$ cyclic. 
Then the assembly map 
\begin{equation*} \label{assemblymap1}  \colim\limits_{G/H \in
\mathcal{O}_{\mathscr{Cyc}}(G)} \mathrm{TC}( R[H])
\to \mathrm{TC}( R[G])   \end{equation*}
is a $p$-adic equivalence. 
\end{prop}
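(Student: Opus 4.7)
The plan is to prove this as a direct combination of the two inputs flagged in the preceding paragraph: the main assembly theorem of \cite{LRRV} and the finiteness property of $\TC$ from \cite{CMM}.

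Step 1 (input from \cite{LRRV}). The main theorem of loc.\ cit.\ provides the ``pointwise $p$-completed'' version of the statement: for any connective ring spectrum $R$ and any group $G$, the map
\[
\colim_{G/H \in \mathcal{O}_{\mathscr{Cyc}}(G)} \TC(R[H])^{\wedge}_p \lto \TC(R[G])^{\wedge}_p
\]
is an equivalence. The necessity of $p$-completing termwise here is essential, since the counterexample in \cite[Sec.~6]{LRRV} shows that the integral assembly map from the cyclic family fails to be an equivalence.

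Step 2 (input from \cite{CMM}). To deduce the proposition from Step 1 it suffices to commute $p$-completion past the colimit on the left, i.e.\ to show that the natural map
\[
\bigl(\colim_{G/H \in \mathcal{O}_{\mathscr{Cyc}}(G)} \TC(R[H])\bigr)^{\wedge}_p \lto \colim_{G/H \in \mathcal{O}_{\mathscr{Cyc}}(G)} \TC(R[H])^{\wedge}_p
\]
is an equivalence. This is where the finiteness property of \cite{CMM} enters: $\TC$ of connective ring spectra, reduced mod $p$, commutes with filtered colimits in the ring spectrum variable and takes values in spectra with uniformly controlled $p$-completeness. Working mod $p$ (which is enough to verify a $p$-adic equivalence) and using that each $R[H]$ is connective, this continuity allows one to identify the mod-$p$ reduction of the left-hand colimit with the colimit of the mod-$p$ reductions, from which the displayed commutation follows.

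Combining Steps 1 and 2 yields the desired $p$-adic equivalence. The main obstacle is Step 2: the indexing category $\mathcal{O}_{\mathscr{Cyc}}(G)$ is not filtered, so the interchange of $p$-completion with the colimit is not formal but really requires the uniform finiteness/continuity output of \cite{CMM} applied to the diagram of connective group-algebra ring spectra $R[H]$.
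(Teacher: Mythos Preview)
Your proposal misidentifies what each cited input provides, and as written the argument does not go through. Step~1 is not what \cite{LRRV} proves: the relevant result there (Theorem~1.19) is that the assembly map for the cyclic family is an equivalence for $\THH$, regarded as a map of connective cyclotomic spectra. It does \emph{not} establish the termwise $p$-completed $\TC$ assembly you assert. Indeed, if that were already in \cite{LRRV}, the proposition would follow immediately with no need for \cite{CMM}: reduction mod~$p$ is a cofibre and hence commutes with \emph{all} colimits formally, so your Step~2 ``commutation'' would be automatic after passing to $(-)/p$, and the finiteness input would play no role. The counterexample in \cite[Sec.~6]{LRRV} is precisely the obstruction to passing from $\THH$-assembly to $\TC$-assembly, and bridging this gap $p$-adically is where the extra input is genuinely required.

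That input is \cite[Theorem~2.7]{CMM}: the functor $\TC(-)/p$ from connective \emph{cyclotomic spectra} to spectra preserves \emph{all} colimits (not merely filtered colimits of ring spectra, which is the weaker statement you invoke and which, as you yourself note, does not apply since $\mathcal{O}_{\mathscr{Cyc}}(G)$ is not filtered). The paper's argument is then a one-liner: apply the colimit-preserving functor $\TC(-)/p$ to the equivalence $\colim_{G/H}\THH(R[H]) \xrightarrow{\sim} \THH(R[G])$ of connective cyclotomic spectra to obtain $\colim_{G/H}\TC(R[H])/p \xrightarrow{\sim} \TC(R[G])/p$. Your Step~2 places the CMM input at the wrong juncture; the real work is commuting $\TC$ past the colimit of cyclotomic spectra, not commuting $p$-completion past a colimit of spectra.
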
 
\begin{proof} 
By \cite[Theorem 1.19]{LRRV}, the assembly map
for the family of cyclic groups 
for $\mathrm{THH}$ 
is an equivalence. Since $\mathrm{TC}/p$ commutes with colimits as a functor
from connective cyclotomic spectra to spectra, \cite[Theorem 2.7]{CMM}, the
result follows.
\end{proof}

\begin{cor} 
Let $R$ be any ring spectrum, and let $G$ be any group, and let
$\mathcal{O}_{\mathscr{C}}(G)$ be as in \Cref{TCpassembly}. 
Then the assembly map 
\begin{equation*} \label{assemblymap}  \colim\limits_{G/H \in \mathcal{O}_{\mathscr{Cyc}}(G)} K( R[H])
\to K( R[G])   \end{equation*}
is a $T(n)$-equivalence for $n \geq 2$. 
\end{cor}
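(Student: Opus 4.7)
The plan is to reduce the $K$-theoretic assembly map to the corresponding assembly map in topological cyclic homology, which is the content of \Cref{TCpassembly}. The two main tools are \Cref{compKTC}, which for $n \geq 2$ identifies $L_{T(n)} K(A)$ with $L_{T(n)} \TC(\tau_{\geq 0} A)$ for any ring spectrum $A$, and the fact (\cref{lem:basicproperties}(vi)) that $p$-adic equivalences are $T(n)$-equivalences for $n \geq 1$.

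First I would verify that connective cover commutes with taking group rings of discrete groups. Since $H$ is discrete, $\Sigma^\infty_+ H \simeq \bigoplus_{h \in H} \mathbb{S}$, so $R[H] = R \otimes \Sigma^\infty_+ H \simeq \bigoplus_{h \in H} R$. Taking $\tau_{\geq 0}$ commutes with direct sums in spectra, giving a natural equivalence of connective ring spectra $(\tau_{\geq 0} R)[H] \xrightarrow{\simeq} \tau_{\geq 0}(R[H])$, functorial in the orbit $G/H$.

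Next, by applying \Cref{compKTC} to each $R[H]$ for $H$ cyclic (and to $R[G]$), and combining with the above identification, we obtain natural equivalences
\[
L_{T(n)} K(R[H]) \simeq L_{T(n)} \TC((\tau_{\geq 0} R)[H])
\]
compatible with the functoriality of both sides over $\mathcal{O}_{\mathscr{Cyc}}(G)$. Since $L_{T(n)}$ is a Bousfield localization, it preserves colimits, so the $T(n)$-localization of the $K$-theoretic assembly map is identified with the $T(n)$-localization of the $\TC$-assembly map
\[
\colim_{G/H \in \mathcal{O}_{\mathscr{Cyc}}(G)} \TC((\tau_{\geq 0} R)[H]) \lto \TC((\tau_{\geq 0} R)[G]).
\]
Applying \Cref{TCpassembly} to the connective ring spectrum $\tau_{\geq 0} R$, this latter map is a $p$-adic equivalence, and hence a $T(n)$-equivalence for all $n \geq 1$ by \cref{lem:basicproperties}(vi).

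There is no substantial obstacle here; the argument is essentially a chaining-together of \Cref{compKTC} and \Cref{TCpassembly}. The only point requiring minor care is to ensure that the identification $\tau_{\geq 0}(R[H]) \simeq (\tau_{\geq 0} R)[H]$ is sufficiently natural in $G/H$ to intertwine the two assembly maps, which is immediate from the discreteness of $H$ and the additivity of $\tau_{\geq 0}$.
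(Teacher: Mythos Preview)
Your proposal is correct and follows essentially the same approach as the paper: reduce to the connective case and to $\TC$ via \Cref{compKTC}, then invoke \Cref{TCpassembly}. The paper's proof is just a terse two-line version of exactly this; your extra care with the identification $(\tau_{\geq 0} R)[H] \simeq \tau_{\geq 0}(R[H])$ and its naturality is a welcome expansion. One small phrasing point: ``$L_{T(n)}$ preserves colimits'' is correct when $L_{T(n)}$ is viewed as a left adjoint into $T(n)$-local spectra, but as an endofunctor of $\Sp$ it does not; what you actually use (and what suffices) is that $T(n)$-equivalences are closed under colimits, which is immediate since $T(n)$-acyclics are.
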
 
\begin{proof} 
By \Cref{compKTC}, we may assume $R$ is connective, and replace $K$ by
$\mathrm{TC}$.
The result follows from \Cref{TCpassembly}. 
\end{proof}

\bibliographystyle{amsalpha}
\bibliography{excision}

\end{document}